\documentclass[twocolumn,fleqn,natbib]{svjour2}
\bibpunct{[}{]}{;}{n}{}{,} 
\smartqed 
\usepackage{graphicx}
\usepackage{epsfig}
\usepackage{amsmath,amssymb,ifthen}
\usepackage{graphicx,psfrag}
\usepackage{color}

\newcommand\R{\mathbb R}
\newcommand\N{\mathbb N}

\newcommand\FF{\mathcal F}
\newcommand\HH{\mathcal H}
\newcommand\MM{\mathcal{M}}
\renewcommand\SS{\mathcal S}
\newcommand\TT{\mathcal T}
\newcommand\PP{\mathcal P}
 
\newcommand\OO{\mathcal{O}} 
\newcommand\XX{\mathcal{X}}
\newcommand\YY{\mathcal{Y}}
\newcommand\EE{\mathcal E} 
\newcommand\II{\mathcal I}

\newcommand\diam{\operatorname{diam}}

\newcommand\uu{\mathbf{u}}
\newcommand\UU{\mathbf{U}}
\newcommand\vv{\mathbf{v}}
\newcommand\VV{\mathbf{V}}
\newcommand\ww{\mathbf{w}}

\renewcommand\div{\operatorname{div}}

\newcommand\NV{\mathbf{\nu}}

\newcommand\slp{\mathfrak{V}}
\newcommand\dlp{\mathfrak{K}}
\newcommand\hyp{\mathfrak{W}}

\newcommand\A{\mathfrak{A}}

\newcommand\linfun{a}


\newcommand\chieq{\chi_{\rm eq}}
\newcommand\teq{t_{\rm eq}}

\newcommand\namecoupl[1]{%
  \ifthenelse{\equal{#1}{bmc}}{{\rm bmc}}{%
  \ifthenelse{\equal{#1}{jn}}{{\rm jn}}{%
  \ifthenelse{\equal{#1}{s}}{{\rm sym}}{%
}}}}

\newcommand\B[1]{\mathfrak{B}_{\namecoupl{#1}}}
\newcommand\BB[1]{\widetilde{\mathfrak{B}}_{\namecoupl{#1}}}
\renewcommand\b[1]{b_{\namecoupl{#1}}}
\newcommand\bb[1]{\widetilde{b}_{\namecoupl{#1}}}

\renewcommand\L{\mathfrak{L}}
\newcommand\Id{\operatorname{Id}}

\newcommand\J{\mathfrak J}

\newcommand\err{\rm err}

\newcommand\norm[2]{\lVert#1\rVert_{#2}}
\newcommand\enorm[1]{|\!|\!|#1\,\!|\!|\!|}

\newcommand\dualkl[1]{\langle #1\rangle}  
\newcommand\nnorm[1]{\lVert#1\rVert_{\HH}}
\newcommand\dual[2]{\dualkl{#1\hspace*{.5mm},#2}}

\newcommand\abs[1]{\left\lvert#1\right\rvert}

\newcommand\set[2]{\left\lbrace #1 \,\middle\vert\, #2 \right\rbrace}

\newtheorem{thm}{Theorem}

\newtheorem{lem}[thm]{Lemma}
\newtheorem{cor}[thm]{Corollary}
\newtheorem{alg}[thm]{Algorithm}
\newtheorem{ass}[thm]{Assumption}

\newtheorem{rem}[thm]{\bfseries Remark}

\newcommand\Doerfler{D\"orfler}

\newcounter{constantsnumber}
\newcommand\namec[2]{%
  \ifthenelse{\equal{#1}{reduction}}{C_{\rm red}}{%
  \ifthenelse{\equal{#1}{loc}}{C_{\rm loc}}{%
  \ifthenelse{\equal{#1}{ell}}{C_{\rm ell}}{%
  \ifthenelse{\equal{#1}{ellA}}{c_{\rm ell}}{%
  \ifthenelse{\equal{#1}{lip}}{C_{\rm Lip}}{%
  \ifthenelse{\equal{#1}{lipA}}{c_{\rm lip}}{%
  \ifthenelse{\equal{#1}{inv}}{C_{\rm inv}}{%
  \ifthenelse{\equal{#1}{rel}}{C_{\rm rel}}{%
  \ifthenelse{\equal{#1}{cea}}{C_{\mbox{\rm\scriptsize C\'ea}}}{%
  \ifthenelse{\equal{#2}{newcounter}}{\refstepcounter{constantsnumber}\label{const#1}}{}C_{\ref{const#1}}}%
}}}}}}}}}

\renewcommand\c[1]{\namec{#1}{reference}}
%

\journalname{Computational Mechanics}
\begin{document}

\title{Classical FEM-BEM coupling methods:\\
nonlinearities, well-posedness, and adaptivity}
\titlerunning{Classical FEM-BEM coupling methods} 

\author{Markus Aurada \and Michael Feischl \and Thomas F\"uhrer \and Michael
Karkulik \and Jens Markus Melenk \and Dirk Praetorius }

\authorrunning{Aurada, Feischl, F\"uhrer, Karkulik, Melenk, and Praetorius}

\institute{Vienna University of Technology, Institute for
Analysis and Scientific Computing, Wiedner Hauptstra\ss e 8-10,
A-1040 Vienna, Austria\\
Tel.: +43-1-58801-10154\\
Fax: +43-1-58801-10196\\
\email{Thomas.Fuehrer@tuwien.ac.at}
 }

\date{Received: date / Accepted: date}

\maketitle

\begin{abstract}
We consider a (possibly) nonlinear interface problem in 2D and 3D,
which is solved by use of various adaptive FEM-BEM coupling strategies,
namely the Johnson-N\'ed\'elec coupling, the Bielak-MacCamy coup\-ling, and Costabel's
symmetric coupling. We provide a framework to prove that the continuous as well
as the discrete Galerkin solutions of these coupling methods additionally solve
an appropriate operator equation with a Lip\-schitz continuous and strongly monotone
operator. Therefore, the coupling formulations
are well-defined, and the Galerkin solutions are quasi-optimal in the sense of
a C\'ea-type lemma. For the respective Galerkin discretizations with lowest-order polynomials,
we provide reliable
residual-based error estimators. Together with an estimator reduction
property, we prove convergence of the adaptive FEM-BEM coupling methods.
A key point for the proof of the estimator reduction are novel inverse-type
estimates for the involved boundary integral operators which are advertized.
Numerical experiments conclude the work and compare performance and effectivity
of the three adaptive coupling procedures in the presence of generic singularities.
\end{abstract}

\section{Introduction}
\label{sec:introduction}

\subsection{Model problem}\label{sec:modprob}
Let $\Omega \subseteq \R^d$ ($d=2,3$) be a bounded Lipschitz domain with
polyhedral boundary $\Gamma:= \partial \Omega$ and normal vector $\NV$.
For given data $(f,u_0,\phi_0)
\in L^2(\Omega) \times H^{1/2}(\Gamma) \times H^{-1/2}(\Gamma)$, we consider the
nonlinear interface problem
\begin{subequations}\label{eq:strongform}
\begin{align}
    -\div(\A\nabla u) &= f &&\quad\text{in }\Omega,
    \label{eq:strongform:interior}\\
    -\Delta u^{\rm ext} &= 0 &&\quad\text{in }\Omega^{\rm ext},
    \label{eq:strongform:exterior}\\
    u- u^{\rm ext} &= u_0 &&\quad\text{on }\Gamma,
    \label{eq:strongform:trace}\\
    (\A\nabla u -\nabla u^{\rm ext})\cdot\NV &= \phi_0 &&\quad\text{on }\Gamma,
    \label{eq:strongform:normal}\\
    u^{\rm ext} &= \OO(\abs{x}^{-1}) &&\quad\text{as } \abs{x}{} \rightarrow \infty.
    \label{eq:strongform:radiation}
\end{align}
\end{subequations}
As usual, these equations are understood in the weak sense, i.e. we seek for a
solution $(u,u^{\rm ext})\in H^1(\Omega)\times H_{\rm loc}^1(\Omega^{\rm ext})$,
with $H_{\rm loc}^1(\Omega^{\rm ext}) = \{v \,\vert\, v\in H^1(K)\, ,
K\subseteq\overline{\Omega^{\rm ext}} \,\text{ compact} \}$,
and $\Omega^{\rm ext} := \R^d\backslash\overline\Omega$.
It is well-known that problem \eqref{eq:strongform} admits a unique solution in
3D. In 2D, the given data have to fulfill the compatibility
condition
\begin{align}\label{eq:compatibility2d}
 \dual{f}{1}_\Omega + \dual{\phi_0}{1}_\Gamma = 0,
\end{align}
to ensure the right behaviour of the solution at infinity.
Moreover, in 2D we assume $\diam(\Omega)<1$ to ensure ellipticity of the simple-layer
potential $\slp$ defined below.
The assumptions on the strongly monotone operator $\A : \R^d \rightarrow \R^d$ will be
given in Section~\ref{sec:preliminaries:monotone}.
We denote by $H^{1/2}(\Gamma)$ the trace space of $H^1(\Omega)$ and by
$H^{-1/2}(\Gamma)$ its dual space. For simplicity, we will write $u$ for the
trace of a function $u\in H^1(\Omega)$, if the meaning is clear.

\subsection{Coupling of FEM and BEM}
Because of the presence of the unbounded exterior domain $\Omega^{\rm ext}$ in~\eqref{eq:strongform:exterior}, it is numerically
attractive to represent $u^{\rm ext}$ in terms of certain integral operators.
This leads to a contribution on the coupling boundary $\Gamma$ instead of the
exterior domain $\Omega^{\rm ext}$. For the interior domain $\Omega$ in~\eqref{eq:strongform:interior}, the (possible) nonlinearity
of $\A$ as well as the (possible) inhomogeneity $f\neq0$ favours the use of a
finite element approach. This led to the development of certain coupling procedures,
and we focus on the Johnson-N\'ed\'elec coupling~\cite{johned}, the Bielak-MacCamy
coupling~\cite{bmc}, and Costabel's symmetric coupling~\cite{costabel} in the
following. All of these approaches lead to a variational formulation
\begin{align}\label{eq:variationalform}
 b(\uu,\vv) = \L(\vv)\text{ for all }\vv\in\HH:=H^1(\Omega)\times H^{-1/2}(\Gamma)
\end{align}
with unknown solution $\uu\in\HH$, which is in a certain sense equivalent
to~\eqref{eq:strongform}.
We equip $\HH$ with the norm
\begin{align}
  \nnorm{\vv}^2 = \norm{v}{H^1(\Omega)}^2 + \norm{\psi}{H^{-1/2}(\Gamma)}^2
\end{align}
for $\vv=(v,\psi)\in\HH$.
Here, $b(\cdot,\cdot)$ is a continuous form on
$\HH\times\HH$ which is linear in the second argument, and $\L(\cdot)$ is a linear and continuous
functional on $\HH$. The original works~\cite{bmc,johned,costabel} focussed
on linear $\A$ and hence bilinear $b(\cdot,\cdot)$, and proved existence and
uniqueness of the solution $\uu\in\HH$ of~\eqref{eq:variationalform}.
In the framework of the symmetric coupling well-posedness for nonlinear $\A$ has
first been considered in the pioneering work~\cite{cs1995}.

For the Galerkin discretization, one considers a finite-dimensional and hence closed
subspace $\HH_\ell$ of $\HH$ and seeks $\UU_\ell\in\HH_\ell$ such that
\begin{align}
\label{eq:discrete:variationalform}
 b(\UU_\ell,\VV_\ell) = \L(\VV_\ell)
 \quad\text{for all }\VV_\ell\in\HH_\ell.
\end{align}
For linear $\A$, existence and uniqueness of the Galerkin solution
$\UU_\ell\in\HH_\ell$ for the symmetric coupling is already found in~\cite{costabel}. Moreover, Galerkin
solutions are quasi-optimal in the sense of the C\'ea-type lemma
\begin{align}\label{eq:cea}
     \nnorm{\uu-\UU_\ell} \leq \c{cea}
     \min_{\VV_\ell\in\HH_\ell} \nnorm{\uu-\VV_\ell},
\end{align}
where the constant $\c{cea}>0$ depends only on the geometry and on $\A$, but
is independent of the given data, the continuous solution $\uu$, and the
Galerkin solution $\UU_\ell$. The analysis of the symmetric coupling has
been generalized to nonlinear $\A$ in~\cite{cs1995}, but the proof required the
underlying mesh to be sufficiently fine, i.e.\ the \mbox{maximal} mesh-size had to be
sufficiently small. Finally, for the nonsymmetric coupling strategies
from~\cite{bmc,johned} and even linear problems, the analysis relied on the compactness of a certain integral
operator $\dlp$ involved. However, this compactness restricted the coupling boundary to
be smooth instead of piecewise polynomial.

Only very recently, Sayas~\cite{sayas09} proved that the Johnson-N\'ed\'elec
coupling is equivalent to an elliptic problem, independently of the compactness of
$\dlp$.
For linear $\A$, more precisely the Yukawa or the Laplace equation,
he thus derived that the variational formulation~\eqref{eq:variationalform} as well
as the discrete formulation~\eqref{eq:discrete:variationalform}
admit unique solutions
and that the discrete solutions are quasi-optimal in the sense of~\eqref{eq:cea}.
His analysis has been simplified by Steinbach~\cite{s2}. For quite general
linear $\A$, the latter work introduces a stabilized bilinear form
\begin{align}\label{eq:stabilization}
 \widetilde b(\widetilde\uu,\vv) := b(\widetilde\uu,\vv) + \sigma(\widetilde\uu,\vv)
\end{align}
which is proved to be elliptic provided the smallest eigenvalue of $\A$ is larger
than $1/4$. Up to some algebraic pre-/postprocessing, the solution $\uu$
of~\eqref{eq:variationalform} coincides with the solution $\widetilde\uu\in\HH$
of
\begin{align}\label{eq:stabilized}
 \widetilde b(\widetilde\uu,\vv) = \widetilde \L(\vv)
 \quad\text{for all }\vv\in\HH,
\end{align}
i.e.\ $\uu = \widetilde\uu + \uu_0$. Steinbach thus proposed to approximate the
unique solution of~\eqref{eq:stabilized} by some Galerkin solution
$\widetilde\UU_\ell\in\HH_\ell$ and to obtain an approximation of $\uu$ by
$\widetilde\UU_\ell+\uu_0$. One drawback of this method is, however, that the
computation of the stabilization $\sigma(\cdot,\cdot)$ as well as of the (constant) offset
$\uu_0$ requires the (numerical) solution of an additional integral equation
$\slp \phi_{\rm eq}=1$. Firstly, this might lead to artificial error contributions from
generic singularities of $\phi_{\rm eq}$. Secondly, the first Strang lemma comes into
play which imposes the assumption that the underlying (boundary) mesh is
sufficiently fine.

Finally, we mention the recent work~\cite{ghs09}, where for the (linear) Yukawa
equation
ellipticity of the bilinear form $b(\cdot,\cdot)$ is proved for both
the Johnson-N\'ed\'elec coupling as well as the Bielak-MacCamy coupling.

\subsection{A~posteriori error estimation}

A~posteriori error analysis aims to provide computable quantities $\varrho_\ell$
which measure the Galerkin error $\norm{\uu-\UU_\ell}\HH$ from above (reliability)
and below (efficiency). The local information provided by $\varrho_\ell$ can then
be used to refine the mesh locally, where the Galerkin error appears to be large.
For the symmetric coupling, a~posteriori error estimation
was initiated by~\cite{cs1995} for 2D and is well-established since then,
cf.\ e.g.~\cite{cfs,lmst,sm} and the references therein. To the best of our knowledge, only residual-based
error estimators provide unconditional upper bounds. On the other hand, for this type of estimators the lower bounds
still require the mesh to be globally quasi-uniform although efficiency is also observed empirically on locally refined meshes~\cite{cc}. For the Johnson-N\'ed\'elec
coupling and the 2D Laplacian, different types of a~posteriori error estimators
have recently been provided and compared in~\cite{aposterjn}.

\subsection{Contributions of current work}

Adapting the results and proofs of~\cite{ghs09,sayas09,s2}, we present a framework
which allows us to prove existence and uniqueness of the three coupling procedures
for certain nonlinear $\A$. Roughly speaking, the idea is as follows: Each
form $b(\uu,\vv)$ on $\HH$ which is linear in $\vv$, induces a nonlinear operator
$\B{}:\HH\to\HH^*$, where $\HH^*$ denotes the dual space of $\HH$.
Then, the variational
formulation~\eqref{eq:variationalform} is rewritten in operator formulation
\begin{align}\label{eq:operator}
 \B{}\uu = \L.
\end{align}
For each coupling, we introduce an appropriate stabilization $\sigma(\cdot,\cdot)$ and consider the
nonlinear operator $\BB{}$ induced by $\widetilde b(\cdot,\cdot)$
from~\eqref{eq:stabilization}. This is done in a way which ensures equivalence
\begin{align}\label{eq:equvialence}
 \B{}\uu = \L
 \quad\Longleftrightarrow\quad
 \BB{}\uu = \widetilde \L.
\end{align}
Under appropriate assumptions on $\A$, the operator $\BB{}$ is Lipschitz
continuous and strongly monotone (or: elliptic). Therefore, the continuous operator formulation $\BB{}\uu=\widetilde\L$ as well
as its Galerkin formulation admit unique solutions $\uu\in\HH$
resp.\ $\UU_\ell\in\HH_\ell$ which also
solve~\eqref{eq:variationalform} resp.~\eqref{eq:discrete:variationalform} and
satisfy the C\'ea-type estimate~\eqref{eq:cea}. For the Johnson-N\'ed\'elec coupling
and the Bielak-MacCamy coupling, our analysis requires that the ellipticity
constant $\c{ellA}>0$ of $\A$ is larger than $1/4$, which reflects the same
restriction as for the linear case in~\cite{s2}. For the symmetric coupling, we
avoid any restriction on $\c{ellA}>0$. We thus obtain the same results as
in~\cite{cs1995}, but without any restriction on the mesh-size and with a
much simpler proof. We stress that, unlike the approach of~\cite{s2}, the stabilized variant is only employed for theoretical reasons to guarantee unique solvability of the non-stabilized equations.

Finally, for lowest-order piecewise polynomials, we derive re\-si\-dual-based a~posteriori
error estimators which provide reliable upper bounds for the respective
Galerkin errors. For the Bielak-MacCamy coupling, we adapt the arguments from
our recent preprint~\cite{invest3D} to prove that the usual adaptive algorithm
drives the residual error estimator to zero.

\subsection{Outline}

We start with a preliminary Section~\ref{sec:preliminaries} which collects
the precise assumptions on $\A$, the integral operators $\slp$, $\dlp$, and
$\hyp$ involved, as well as the notation used in the remainder of the paper.
Section~\ref{sec:bmc} then considers the Bielak-MacCamy coupling. We sketch
the derivation of the coupling equations and prove existence and uniqueness of
the continuous as well as of the Galerkin formulation as outlined above.
Finally, we state and prove a residual-based a~posteriori error estimator.
In Section~\ref{sec:jn} and Section~\ref{sec:sym}, the same is done for the
Johnson-N\'ed\'elec coupling as well as for Costabel's symmetric coupling.
However, for the sake of brevity and since the proofs are very similar to that
of the Bielak-MacCamy coupling, we only sketch the details. Emphasis is laid,
however, on the fact that no restriction on the ellipticity constant $\c{ellA}>0$
of $\A$
is imposed in the case of the symmetric coupling. Section~\ref{sec:convergence}
states the usual adaptive mesh-refining algorithm. Using the concept of
estimator reduction and recent results of~\cite{invest3D}, convergence of
$\UU_\ell$ to $\uu$ is proved as $\ell\to\infty$, where $\ell$ denotes the
step counter of the adaptive loop. A final Section~\ref{sec:numerics} provides some
numerical experiments. Emphasis is laid on the comparison of the three
coupling procedures with respect to accuracy and computational time. Moreover,
we numerically investigate the restriction $\c{ellA}>1/4$ in case of the
Johnson-N\'ed\'elec and Bielak-MacCamy coupling. Finally, we see that the proposed
adaptive schemes are much superior to the usual approach, where the mesh is
only uniformly refined.

\section{Preliminaries}
\label{sec:preliminaries}

\subsection{Boundary integral operators}
\label{sec:preliminaries:bem}

Throughout, $\dlp$ denotes the double-layer potential with adjoint
$\dlp^\dagger$, $\slp$ denotes the simple-layer potential, and $\hyp$ the
hypersingular operator. With the fundamental solution of the Laplacian
\begin{align*}
  G(z) :=
  \begin{cases}
    -\tfrac1{2\pi} \log\abs{z} &\quad\text{for } z \in \R^2 \backslash \{0\}, \\
    \frac{1}{4\pi} \frac1{\abs{z}} &\quad\text{for } z \in \R^3 \backslash \{0\},
  \end{cases}
\end{align*}
these integral operators formally read as follows,
\begin{align*}
  (\slp \varphi)(x) &= \int_\Gamma G(x-y)\varphi(y)\,d\Gamma_y, \\
  (\dlp \varphi)(x) &= \int_\Gamma \partial_{\NV(y)} G(x-y)
  \varphi(y)\,d\Gamma_y, \\
  (\hyp \varphi)(x) &= -\partial_{\NV(x)} \int_\Gamma \partial_{\NV(y)}
  G(x-y)\varphi(y)
  \,d\Gamma_y,
\end{align*}
for $x\in\Gamma$ and with $\partial_{\NV(y)}$ denoting the normal derivative at $y\in
\Gamma$.
By continuous extension, we obtain bounded linear operators
\begin{align}
\begin{split}
\slp &\in L(H^{-1/2}(\Gamma) ; H^{1/2}(\Gamma)),\\
\dlp &\in L ( H^{1/2} (\Gamma) ; H^{1/2} (\Gamma) ),\\
\dlp^\dagger &\in L(H^{-1/2}(\Gamma); H^{-1/2}(\Gamma)),\\
\hyp &\in L(H^{1/2}(\Gamma); H^{-1/2}(\Gamma)).
\end{split}
\end{align}
Finally, we stress the ellipticity of the simple-layer potential
$\dual{\phi}{\slp\phi}_\Gamma \gtrsim \norm{\phi}{H^{-1/2}(\Gamma)}^2$.
Together with symmetry and continuity of $\slp$, this implies norm
equivalence
$\dual{\phi}{\slp\phi}_\Gamma \simeq \norm{\phi}{H^{-1/2}(\Gamma)}^2$.
For further properties of the integral operators, the reader is referred
to the literature, e.g.\ the monographs~\cite{hw,mclean,ss,s}.

\subsection{Strongly monotone operators}
\label{sec:preliminaries:monotone}

An operator $\BB{}:\HH\to\HH^*$ is Lipschitz continuous provided that
there is a constant $\c{lip}>0$ such that
\begin{align}\label{eq:operator:lipschitz}
    \|\BB{}\uu-\BB{}\vv\|_{\HH^*} \leq \c{lip} \nnorm{\uu-\vv}
\end{align}
holds for all $\uu,\vv\in\HH$, where $\|\cdot\|_{\HH^*}$ denotes the
usual norm on the dual space $\HH^*$. With $\dual\cdot\cdot$ the duality
brackets on $\HH^*\times\HH$, the operator $\BB{}$ is strongly monotone
provided that there is a constant $\c{ell}>0$ such that
\begin{align}\label{eq:operator:monotone}
 \dual{\BB{}\uu-\BB{}\vv}{\uu-\vv}
 \ge\c{ell}\,\nnorm{\uu-\vv}^2
\end{align}
holds for all $\uu,\vv\in\HH$. We refer to~\cite[Section~25.4]{zeidler} for
the following standard results on strongly monotone operators:
Under~\eqref{eq:operator:lipschitz}--\eqref{eq:operator:monotone}, the operator
$\BB{}$ is bijective, and the inverse $\BB{}^{-1}$ is Lipschitz continuous
with Lip\-schitz constant $1/\c{ell}$. Consequently,
for every $\widetilde\L\in\HH^*$, there is a unique $\uu\in\HH$
with
\begin{align}\label{eq:operator:continuous}
 \dual{\BB{}\uu}{\vv} = \widetilde\L(\vv)
 \quad\text{for all }\vv\in\HH,
\end{align}
and $\uu$ depends Lipschitz continuously on $\widetilde\L$. Moreover, for every closed
subspace $\HH_\ell$ of $\HH$, there is a unique $\UU_\ell\in\HH_\ell$ such
that
\begin{align}\label{eq:operator:galerkin}
 \dual{\BB{}\UU_\ell}{\VV_\ell} = \widetilde\L(\VV_\ell)
 \quad\text{for all }\VV_\ell\in\HH_\ell.
\end{align}
Finally, $\UU_\ell$ depends also Lipschitz continuously on $\widetilde\L$, and
there holds
the C\'ea-type quasi-optimality~\eqref{eq:cea}, where
$\c{cea} = \c{lip}/\c{ell}$.

\begin{rem}
Provided that the discrete spaces $\HH_\ell$ satisfy
\begin{align}\label{dp:condition}
 \HH_{\ell}\subseteq\HH_{\ell+1} 
 \text{ for all }\ell\ge0
 \quad\text{and}\quad
 \overline{\mbox{$\bigcup_{\ell=0}^\infty$}\HH_\ell} = \HH,
\end{align}
the quasi-optimality~\eqref{eq:cea} implies convergence 
$\UU_\ell\to\uu$ of the Galerkin solutions as $\ell\to\infty$.
In practice, the conditions~\eqref{dp:condition} are satisfied
if the underlying meshes are successively refined and the corresponding mesh-sizes tend to zero everywhere.
\end{rem}

To apply the framework of strongly monotone operators to the FEM-BEM
coupling formulations presented in Section~\ref{sec:bmc}, \ref{sec:jn},
and \ref{sec:sym}, we have to make some assumptions on the coefficient
function $\A : \R^d\rightarrow \R^d$.
Firstly, we consider $\A$ to be Lipschitz continuous, i.e. there
exists a constant $\c{lipA}$ such that
\begin{align}\label{eq:lip_A_p}
  |\A y-\A z| \leq \c{lipA} |y-z|
\end{align}
holds for all $y,z\in\R^d$.
Integrating the square of~\eqref{eq:lip_A_p}, one obtains
\begin{align}\label{eq:lip_A}
  \norm{\A\nabla v - \A\nabla w}{L^2(\Omega)}^2 \leq \c{lipA}^2 \norm{\nabla v - \nabla
  w}{L^2(\Omega)}^2
\end{align}
for all $v,w \in H^1(\Omega)$.
Secondly, we assume $\A$ to be strongly monotone in the following sense: There
exists a constant $\c{ellA}>0$ such that there holds
\begin{align}\label{eq:ell_A}
  \c{ellA} \norm{\nabla v - \nabla w}{L^2(\Omega)}^2 \leq \dual{\A\nabla v -
  \A\nabla w}{\nabla v - \nabla w}_\Omega
\end{align}
for all $v,w \in H^1(\Omega)$.
Here, $\dual\cdot\cdot_{\Omega}$ denotes the $L^2(\Omega)$-scalar product,
i.e.\ $\dual{v}{w}_\Omega = \int_\Omega vw\,dx$. Similarly, we shall write
$\dual\cdot\cdot_\Gamma$ for the $L^2(\Gamma)$-scalar product which is
extended by continuity to the duality bracket between $H^{1/2}(\Gamma)$
and $H^{-1/2}(\Gamma)$.

\begin{rem}
We stress that conditions~\eqref{eq:lip_A} and~\eqref{eq:ell_A} are sufficient for solvability considerations, see Sections~\ref{sec:bmc_solv}--\ref{sec:bmc_solv2},
\ref{sec:jn_solv}--\ref{sec:jn_solv2}, and \ref{sec:sym_solv}--\ref{sec:sym_solv2}. Anyhow, in our a~posteriori analysis we need that $\A$ is pointwise Lipschitz continuous~\eqref{eq:lip_A_p}.
\end{rem}

\begin{rem}
As far as existence and uniqueness of continuous solution $\uu$ and discrete solution $\UU_\ell$ from~\eqref{eq:operator:continuous}--\eqref{eq:operator:galerkin} is concerned, our analysis only requires that the induced operator $\BB{}$ is strictly monotone, i.e.\
\begin{align}\label{dp:strictlymonotone}
 \dual{\BB{}\uu\!-\!\BB{}\vv}{\uu\!-\!\vv}
 >0
 \text{ for all }\uu,\vv\in\HH
 \text{ with }\uu\neq\vv,
\end{align}
i.e.~\eqref{eq:operator:monotone} is replaced by~\eqref{dp:strictlymonotone}.
Then, the Browder-Minty theorem applies and, in particular, proves weak convergence $\UU_\ell\rightharpoonup\uu$ in $\HH$ as $\ell\to\infty$ under assumption~\eqref{dp:condition}. In this framework, however, the C\'ea-type estimate~\eqref{eq:cea} cannot hold in general and a~posteriori error estimates can hardly been derived. Therefore, we leave the details to the reader. However, we stress that~\eqref{dp:strictlymonotone} holds if the nonlinearity $\A$ satisfies
\begin{align}\label{eq:ell_A:stricly}
  0< \dual{\A\nabla v -
  \A\nabla w}{\nabla v - \nabla w}_\Omega
\end{align}
for all $v,w \in H^1(\Omega)$ with $\nabla v\neq\nabla w$ instead of~\eqref{eq:ell_A}.
\end{rem}

\subsection{Discrete spaces}\label{sec:discretization}
In Sections~\ref{sec:bmc}--\ref{sec:sym}, the model problem~\eqref{eq:strongform}
is reformulated as variational equality~\eqref{eq:variationalform} in the Hilbert space
$\HH:= H^1(\Omega) \times H^{-1/2}(\Gamma)$. For the respective
discretizations, let
$\TT_\ell$ be a regular triangulation of $\Omega$ and
let $\EE_\ell^\Gamma$ be a regular triangulation of $\Gamma$, where regularity
is understood in the sense of Ciarlet.
We approximate a function $u\in H^1(\Omega)$ by continuous, $\TT_\ell$-piecewise
affine functions on $\Omega$. For a function $\phi\in H^{-1/2}(\Gamma)$, we use
$\EE_\ell^\Gamma$-piecewise constant functions, i.e. our discrete spaces read
$\HH_\ell:=\SS^1(\TT_\ell) \times \PP^0(\EE_\ell^\Gamma) \subseteq \HH$.

Let $\EE_\ell^\Omega$ denote the set of all interior faces, i.e. for
$E\in\EE_\ell^\Omega$ there exist unique elements $T^+,T^-$ with $E = T^+\cap
T^-$. We define the patch of $E\in\EE_\ell^\Omega$ by $\omega_{\ell,E} :=
T^+\cup T^-$. Furthermore, we define the local mesh-width function $h_\ell$ by
\begin{align*}
  h_\ell(\tau) := \begin{cases} \lvert\tau\rvert^{1/d} &\quad\text{for }
  \tau\in \TT_\ell, \\
  \lvert\tau\rvert^{1/(d-1)} &\quad\text{for }\tau\in \EE_\ell^\Omega \cup
  \EE_\ell^\Gamma,
  \end{cases}
\end{align*}
where $\lvert\cdot\rvert$ denotes the volume resp. surface measure.
A triangulation $\TT_\ell$ is called $\gamma$-shape regular, if there holds
\begin{align}
  \sigma(\TT_\ell) := \max\limits_{T\in\TT_\ell} \frac{\diam(T)^d}{|T|} \leq
  \gamma.
\end{align}
Analogously we call $\EE_\ell^\Gamma$ $\gamma$-shape regular, if
\begin{align}
  \sigma(\EE_\ell^\Gamma) := \max\limits_{E\in\EE_\ell^\Gamma}
  \frac{\diam(E)^2}{|E|} \leq \gamma,
\end{align}
for $d=3$. For $d=2$, the $\gamma$-shape regularity of $\EE_\ell^\Gamma$ reads
\begin{align}
  \sigma(\EE_\ell^\Gamma) := \max\limits_{E\neq E'} \set{\frac{\abs{E'}}{\abs{E}}}
  {E'\cap E \neq \emptyset} \leq \gamma.
\end{align}
The definition of $h_\ell$ and shape regularity implies equivalence
$\diam(\tau) \simeq
h_\ell(\tau) \simeq h_\ell(\tau')$ for all $\tau,\tau' \in \TT_\ell$ resp.\ $\tau,\tau'\in\EE_\ell^\Gamma$ with $\tau\cap\tau'\neq\emptyset$, where
the hidden constants depend only on $\gamma$.

\begin{rem}(i)
  We stress that $\TT_\ell$ and $\EE_\ell^\Gamma$ are formally independent triangulations
  of $\Omega$ and $\Gamma$, respectively. For the numerical implementation, however, we
  restrict to the case that
  $\EE_\ell^\Gamma$ is the restriction $\TT_\ell|_\Gamma$ of $\TT_\ell$ on the boundary, which
  indeed is a regular triangulation of $\Gamma$. In this case, we finally remark
  that $\gamma$-shape regularity of $\TT_\ell$ also implies $\widetilde\gamma$-shape regularity
  of $\EE_\ell^\Gamma := \TT_\ell|_\Gamma$.
  \\ (ii)
  In 2D, the radiation condition~\eqref{eq:strongform:radiation} of $u^{\rm ext}$ can also be adapted to $u^{\rm ext}(x) =
  a\log|x| + \OO(1)$ for $x\rightarrow \infty$ and fixed $a\in\R$.
  In this case, the compatibility condition~\eqref{eq:compatibility2d}
 can be dropped.
  The analysis of the following sections still holds true for that case.
\end{rem}

\section{Bielak-MacCamy coupling}
\label{sec:bmc}
We can reformulate problem~\eqref{eq:strongform} with the help of the
Bielak-MacCamy FEM-BEM coupling, which first appeared in~\cite{bmc}.
This section is build up as follows:
Firstly, we give a short sketch of the derivation of the
Bielak-MacCamy coupling equations.
Then, we investigate well-posedness of
their continuous and discrete formulations.
And last, we derive an residual-based error estimator for the Bielak-MacCamy
coupling method.

\subsection{Derivation of Bielak-MacCamy coupling}
\label{sec:bmc_form}
The first Green's formula for the interior part~\eqref{eq:strongform:interior}
reads
\begin{align}\label{eq:bmc_glg1}
  \dual{\A\nabla u}{\nabla v}_\Omega - \dual{\A\nabla u \cdot \NV}{v}_\Gamma =
  \dual{f}{v}_\Omega
\end{align}
for all $v\in H^1(\Omega)$.
We plug in the jump condition~\eqref{eq:strongform:normal}
  for the normal derivative and obtain
\begin{align}\label{eq:bmc_glg1.1}
  \dual{\A\nabla u}{\nabla v}_\Omega
\!-\!
  \dual{\nabla u^{\rm ext}\cdot\NV}{v}_\Gamma = \dual{f}{v}_\Omega \!+\! \dual{\phi_0}{v}_\Gamma.
\end{align}
For the exterior solution $u^{\rm ext}$ of~\eqref{eq:strongform:exterior}, we make an indirect potential ansatz with the simple-layer potential
\begin{align}\label{eq:bmc_uext}
  u^{\rm ext} = \widetilde\slp \phi \quad\text{in }\Omega^{\rm ext},
\end{align}
where the integral operator $\widetilde\slp$ is defined as $\slp$, but is now evaluated in $\Omega^{\rm ext}$ instead of
$\Gamma$. We stress that the density $\phi\in H^{-1/2}(\Gamma)$ is unknown.
Then, we use properties of the simple-layer potential operator:
Firstly, by use of the continuity of the simple-layer potential in $\R^d$, i.e.\ $\widetilde\slp\phi = \slp\phi$ on $\Gamma$, and the
trace jump condition~\eqref{eq:strongform:trace}, we see
\begin{align}\label{eq:bmc:trace}
 -u + \slp\phi = -u_0
 \quad\text{on }\Gamma.
\end{align}
Secondly, we use the
jump condition of the exterior conormal derivative of the
  simple-layer potential
to see
\begin{align}
 \nabla u^{\rm ext}\cdot\NV
 = \partial_\NV^{\rm ext}\widetilde\slp\phi
 =-(\tfrac12-\dlp^\dagger)\phi.
\end{align}
Plugging the
last
equation into~\eqref{eq:bmc_glg1.1} and supplementing
the system with the variational formulation of~\eqref{eq:bmc:trace},
we end up with the variational formulation of the
Bielak-MacCamy coupling: Find $\uu=(u,\phi) \in \HH$ such that
\begin{align}
  \dual{\A\nabla u}{\nabla v}_\Omega + \dual{(\tfrac12-\dlp^\dagger)
  \phi}{v}_\Gamma &= \dual{f}{v}_\Omega + \dual{\phi_0}{v}_\Gamma,
  \nonumber\\\label{eq:bmc}
  -\dual{\psi}{u}_\Gamma + \dual{\psi}{\slp\phi}_\Gamma
  &= -\dual{\psi}{u_0}_\Gamma,
\end{align}
holds for all $\vv=(v,\psi) \in \HH$.

From now on, let $\XX_\ell$ be a closed subspace of $H^1(\Omega)$
and $\YY_\ell$ be a closed subspace of $H^{-1/2}(\Gamma)$.
We define $\HH_\ell := \XX_\ell \times \YY_\ell$. Note that the entire space $\HH=\HH_\ell$ is a valid choice, and hence the following analysis applies
to both, the continuous formulation~\eqref{eq:bmc} and the Galerkin
discretization. In the latter case, $\uu\in\HH$ in~\eqref{eq:bmc} is replaced
by $\UU_\ell\in\HH_\ell$, and $\vv\in\HH$ is replaced by arbitrary
$\VV_\ell\in\HH_\ell$.

\subsection{Stabilization}\label{sec:bmc_solv}

We define the linear form $\b{bmc}:\HH\times\HH \rightarrow \R$ for any $\uu =
(u,\phi), \vv = (v,\psi) \in\HH$ by
\begin{align}\label{def:b}
\begin{split}
  \b{bmc}(\uu,\vv) := \dual{\A\nabla u}{\nabla v}_\Omega &+ \dual{(\tfrac12 -
  \dlp^\dagger)\phi}{v}_\Gamma \\
  &- \dual{\psi}{u}_\Gamma + \dual{\psi}{\slp\phi}_\Gamma.
\end{split}
\end{align}
Note that $\b{bmc}(\cdot,\cdot)$ is only linear in the second argument.
Furthermore, we define linear functionals $\linfun_1$ and $\linfun_2$ on $H^1(\Omega)$
and $H^{-1/2}(\Gamma)$ by
\begin{align}
\label{def:ell_12}
  \linfun_1(v) &:= \dual{f}{v}_\Omega + \dual{\phi_0}{v}_\Gamma \\
  \linfun_2(\psi) &:= \dual{\psi}{-u_0}_\Gamma
\end{align}
for all $\vv=(v,\psi)\in\HH$. For $\linfun_2$, we also use the notation $\linfun_2(\psi) =
\dual{\psi}{\linfun_2}_\Gamma$.
With these definitions, the continuous formulation of the Bielak-MacCamy coupling is equivalently written as follows: Find $\uu\in\HH$ such that
\begin{align}
\label{eq:bmc_b}
 \b{bmc}(\uu,\vv) = \L(\vv)
 :=\linfun_1(v) + \linfun_2(\psi)
\end{align}
for all $\vv\in\HH$. Moreover, the Galerkin formulation of problem~\eqref{eq:bmc} reads: Find $\UU_\ell\in\HH_\ell$ such
that
\begin{align}
\label{eq:bmc_b_galerkin}
 \b{bmc}(\UU_\ell,\VV_\ell) = \L(\VV_\ell)
\end{align}
holds for all $\VV_\ell=(V_\ell,\Psi_\ell)\in\HH_\ell$.

Throughout the remainder of this section, we need the following assumption.
\begin{ass}\label{ass1}
There is a fixed function $\xi \in \bigcap_{\ell\in\N_0} \YY_\ell$
  with $\dual{\xi}1_\Gamma \neq 0$.
\end{ass}

\begin{rem}
  The discrete space $\YY_\ell =
  \PP^0(\EE_\ell^\Gamma)$, introduced in Section~\ref{sec:discretization},
  fulfills Assumption~\ref{ass1} with $\xi = 1$.
\end{rem}

Now, we try to show ellipticity of a linear form which is equivalent to
$\b{bmc}(\cdot,\cdot)$.
Firstly, note that we have to take care of the fact that $\b{bmc}(\cdot,\cdot)$ is not elliptic since
\begin{align}
  \b{bmc}((1,0),(1,0)) = \dual{\A\nabla 1}{\nabla 1}_\Omega = 0.
\end{align}
Therefore, we introduce a new linear form $\bb{bmc}(\cdot,\cdot)$ which is equivalent
to $\b{bmc}(\cdot,\cdot)$.

\begin{thm}\label{thm:equiv_b}
  With $\xi$ from Assumption~\ref{ass1}, the linear form
  \begin{align}\label{def:bb}
\begin{split}
    \bb{bmc}(\UU_\ell,\VV_\ell) &:= \b{bmc}(\UU_\ell,\VV_\ell)
    \\&\quad+ \dual{\xi}{\slp\Phi_\ell - U_\ell}_\Gamma
    \dual{\xi}{\slp\Psi_\ell - V_\ell}_\Gamma
\end{split}
  \end{align}
  is equivalent to the linear form $\b{bmc}(\cdot,\cdot)$ in the following sense: The pair $\UU_\ell =
  (U_\ell,\Phi_\ell) \in\HH_\ell$ solves
  problem~\eqref{eq:bmc_b_galerkin} if and only if it solves
  \begin{align}\label{eq:bmc_bb}
    \bb{bmc}(\UU_\ell,\VV_\ell) = \L(\VV_\ell) + \dual\xi{\linfun_2}_\Gamma
    \dual\xi{\slp\Psi_\ell - V_\ell}_\Gamma
  \end{align}
  for all $\VV_\ell = (V_\ell,\Psi_\ell) \in\HH_\ell$.
\end{thm}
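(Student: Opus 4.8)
The plan is to exploit the fact that the modification term $\dual{\xi}{\slp\Phi_\ell-U_\ell}_\Gamma\,\dual{\xi}{\slp\Psi_\ell-V_\ell}_\Gamma$ in $\bb{bmc}$ is a product of a scalar functional evaluated on the solution and the same functional evaluated on the test function, so it can be ``absorbed'' into the right-hand side whenever the solution makes the first factor vanish. Concretely, I would work with the auxiliary linear functional
\begin{align*}
  \mu_\ell(\VV_\ell):=\dual{\xi}{\slp\Psi_\ell - V_\ell}_\Gamma
  \qquad\text{for }\VV_\ell=(V_\ell,\Psi_\ell)\in\HH_\ell,
\end{align*}
which is well-defined on $\HH_\ell$ since $\xi\in\YY_\ell\subseteq H^{-1/2}(\Gamma)$ by Assumption~\ref{ass1} and $\slp$ is continuous. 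With this notation, $\bb{bmc}(\UU_\ell,\VV_\ell)=\b{bmc}(\UU_\ell,\VV_\ell)+\mu_\ell(\UU_\ell)\,\mu_\ell(\VV_\ell)$, and the claimed right-hand side of~\eqref{eq:bmc_bb} is $\L(\VV_\ell)+\dual{\xi}{\linfun_2}_\Gamma\,\mu_\ell(\VV_\ell)$. Observe also that $\dual{\xi}{\linfun_2}_\Gamma=\dual{\xi}{-u_0}_\Gamma$ by definition of $\linfun_2$.

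First I would prove the implication ``\eqref{eq:bmc_b_galerkin} $\Rightarrow$ \eqref{eq:bmc_bb}''. The key observation is that the second equation of the Bielak-MacCamy system~\eqref{eq:bmc}, tested with the \emph{admissible} choice $\psi=\xi\in\YY_\ell$ (legitimate by Assumption~\ref{ass1}), reads $-\dual{\xi}{U_\ell}_\Gamma+\dual{\xi}{\slp\Phi_\ell}_\Gamma=-\dual{\xi}{u_0}_\Gamma$, i.e. $\mu_\ell(\UU_\ell)=\dual{\xi}{-u_0}_\Gamma=\dual{\xi}{\linfun_2}_\Gamma$. Hence for any $\VV_\ell\in\HH_\ell$,
\begin{align*}
  \bb{bmc}(\UU_\ell,\VV_\ell)
  &=\b{bmc}(\UU_\ell,\VV_\ell)+\mu_\ell(\UU_\ell)\,\mu_\ell(\VV_\ell)\\
  &=\L(\VV_\ell)+\dual{\xi}{\linfun_2}_\Gamma\,\mu_\ell(\VV_\ell),
\end{align*}
which is exactly~\eqref{eq:bmc_bb}. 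Here I used~\eqref{eq:bmc_b_galerkin} for the first summand and the identity just derived for the second.

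For the converse ``\eqref{eq:bmc_bb} $\Rightarrow$ \eqref{eq:bmc_b_galerkin}'', suppose $\UU_\ell\in\HH_\ell$ solves~\eqref{eq:bmc_bb} for all $\VV_\ell\in\HH_\ell$. The idea is again to first pin down $\mu_\ell(\UU_\ell)$ by a clever test function. Test~\eqref{eq:bmc_bb} with $\VV_\ell=(0,\xi)\in\HH_\ell$; then $\mu_\ell((0,\xi))=\dual{\xi}{\slp\xi}_\Gamma=:c_\xi$, and ellipticity of $\slp$ together with $\dual{\xi}1_\Gamma\neq0$ (so $\xi\neq0$) gives $c_\xi>0$. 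Plugging in and rearranging yields $\mu_\ell(\UU_\ell)\,c_\xi=\dual{\xi}{\linfun_2}_\Gamma\,c_\xi - \b{bmc}(\UU_\ell,(0,\xi))+\L((0,\xi))$; one checks that $\b{bmc}(\UU_\ell,(0,\xi))-\L((0,\xi))=-\dual{\xi}{U_\ell}_\Gamma+\dual{\xi}{\slp\Phi_\ell}_\Gamma+\dual{\xi}{u_0}_\Gamma = \mu_\ell(\UU_\ell)-\dual{\xi}{\linfun_2}_\Gamma$, so this collapses to $\mu_\ell(\UU_\ell)\,c_\xi = \mu_\ell(\UU_\ell)\,c_\xi - (\mu_\ell(\UU_\ell)-\dual{\xi}{\linfun_2}_\Gamma)\cdot 1$, forcing $\mu_\ell(\UU_\ell)=\dual{\xi}{\linfun_2}_\Gamma$. (If this particular manipulation turns out awkward, the cleaner route is: since $\mu_\ell(\cdot)$ is a nonzero functional on $\HH_\ell$ — it is nonzero on $(0,\xi)$ — pick any $\WW_\ell\in\HH_\ell$ with $\mu_\ell(\WW_\ell)\neq0$, write the difference of~\eqref{eq:bmc_bb} tested at $\WW_\ell$ against the already-known relation $\b{bmc}(\UU_\ell,\WW_\ell)=\L(\WW_\ell)$ — which we do not yet have — so one really does need the direct computation above.) Once $\mu_\ell(\UU_\ell)=\dual{\xi}{\linfun_2}_\Gamma$ is established, substituting back into~\eqref{eq:bmc_bb} cancels the product terms and leaves $\b{bmc}(\UU_\ell,\VV_\ell)=\L(\VV_\ell)$ for all $\VV_\ell\in\HH_\ell$, which is~\eqref{eq:bmc_b_galerkin}.

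I expect the \textbf{main obstacle} to be the converse direction: one must show that a solution of the stabilized equation automatically satisfies the ``constraint'' $\mu_\ell(\UU_\ell)=\dual{\xi}{\linfun_2}_\Gamma$ before one can conclude it solves the original equation. This requires choosing the right test function (here $(0,\xi)$, exploiting ellipticity $c_\xi>0$ of $\slp$) and carefully disentangling how $\b{bmc}(\cdot,(0,\xi))$ reproduces $\mu_\ell$; the bookkeeping with the $\pm\tfrac12$, the $\dlp^\dagger$ term (which drops out because the first component of the test function is zero), and the sign conventions in $\linfun_2$ is the only real point where an error could creep in. Everything else is linear-algebra-style manipulation of the split $\bb{bmc}=\b{bmc}+\mu_\ell\otimes\mu_\ell$.
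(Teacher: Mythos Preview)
Your approach is essentially identical to the paper's: both directions test with $(0,\xi)\in\HH_\ell$, use that $\b{bmc}(\UU_\ell,(0,\xi))=\dual{\xi}{\slp\Phi_\ell-U_\ell}_\Gamma=\mu_\ell(\UU_\ell)$ and $\L((0,\xi))=\linfun_2(\xi)=\dual{\xi}{\linfun_2}_\Gamma$, and in the converse direction invoke ellipticity of $\slp$ to cancel a positive scalar factor. The only slip is in your ``collapsed'' identity: substituting correctly gives
\[
 \mu_\ell(\UU_\ell)\,c_\xi \;=\; \dual{\xi}{\linfun_2}_\Gamma\,c_\xi \;-\; \bigl(\mu_\ell(\UU_\ell)-\dual{\xi}{\linfun_2}_\Gamma\bigr),
\]
i.e.\ $\bigl(\mu_\ell(\UU_\ell)-\dual{\xi}{\linfun_2}_\Gamma\bigr)(1+c_\xi)=0$, and it is the factor $1+c_\xi=1+\dual{\xi}{\slp\xi}_\Gamma>0$ (exactly as in the paper) that forces the conclusion --- not $c_\xi$ alone.
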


\begin{proof}
\textbf{Step~1. }
Let $\UU_\ell=(U_\ell,\Phi_\ell)$ be a solution of~\eqref{eq:bmc_b}. Testing with $(V_\ell,\Psi_\ell) = (0,\xi)\in\HH_\ell$, we see
$\b{bmc}(\UU_\ell,(0,\xi))$\linebreak$ = \linfun_2(\xi)$. With the definition of
$\b{bmc}(\cdot,\cdot)$, we infer
\begin{align*}
  0 = \b{bmc}(\UU_\ell,(0,\xi))-\linfun_2(\xi) =
\dual\xi{\slp\Phi_\ell-U_\ell-\linfun_2}
_{\Gamma}.
\end{align*}
Hence, $\dual\xi{\slp\Phi_\ell-U_\ell-\linfun_2}_\Gamma
\dual\xi{\slp\Psi_\ell -V_\ell}_\Gamma = 0$ for all $\VV_\ell\in\HH_\ell$. Clearly, this is
equivalent to
\begin{align*}
  \dual\xi{\slp\Phi_\ell - U_\ell}_\Gamma \dual\xi{\slp\Psi_\ell -V_\ell}_\Gamma =
  \dual\xi{\linfun_2}_\Gamma \dual\xi{\slp\Psi_\ell -V_\ell}_\Gamma
\end{align*}
for all $\VV_\ell=(V_\ell,\Psi_\ell)\in\HH_\ell$. Therefore, $\UU_\ell = (U_\ell,\Phi_\ell)\in\HH_\ell$ also
solves problem~\eqref{eq:bmc_bb}.
\\
\textbf{Step~2. }
For the converse implication, let $\UU_\ell = (U_\ell,\Phi_\ell)\in\HH_\ell$ solve~\eqref{eq:bmc_bb}.
The choice of $\VV_\ell = (0,\xi)$ in~\eqref{eq:bmc_bb} gives
\begin{align*}
  \linfun_2(\xi) &+ \dual\xi{\linfun_2}_\Gamma \dual\xi{\slp\xi}_\Gamma
  =
\bb{bmc}
( (U_\ell,\Phi_\ell), (0,\xi)) \\
  &= \dual\xi{\slp\Phi_\ell-U_\ell}_\Gamma +
  \dual\xi{\slp\Phi_\ell-U_\ell}_\Gamma \dual\xi{\slp\xi}_\Gamma,
\end{align*}
which is equivalent to
\begin{align*}
  \dual\xi{\slp\Phi_\ell-U_\ell-\linfun_2}_\Gamma (1+\dual\xi{\slp\xi}_\Gamma)=0.
\end{align*}
Since $\slp$ is $H^{-1/2}(\Gamma)$-elliptic, the last equation implies
$\dual\xi{\slp\Phi_\ell-U_\ell-\linfun_2}=0$. We thus infer
\begin{align*}
  &\bb{bmc}(\UU_\ell,\VV_\ell)-\b{bmc}(\UU_\ell,\VV_\ell)
  \\&\quad
  = \dual\xi{\slp\Phi_\ell-U_\ell}_\Gamma
  \dual\xi{\slp\Psi_\ell-V_\ell}_\Gamma
  \\&\quad
  = \dual\xi{\linfun_2}_\Gamma
  \dual\xi{\slp\Psi_\ell-V_\ell}_\Gamma
\end{align*}
which, together with~\eqref{def:bb} and~\eqref{eq:bmc_bb}, proves that
$\UU_\ell = (U_\ell,\Phi_\ell)$ is also a solution of problem~\eqref{eq:bmc_b}.
\qed
\end{proof}

\subsection{Existence and uniqueness of solutions}\label{sec:bmc_solv2}

The linear forms $\b{bmc}(\cdot,\cdot)$ and $\bb{bmc}(\cdot,\cdot)$ induce operators $\B{bmc},
\BB{bmc} : \HH \rightarrow \HH^*$ by
\begin{align}\label{def:B_BB}
\begin{split}
  \dual{\B{bmc} \uu}{\vv} &:= \b{bmc}(\uu,\vv) \quad\text{for all } \uu,\vv\in\HH, \\
  \dual{\BB{bmc} \uu}{\vv} &:= \bb{bmc}(\uu,\vv) \quad\text{for all } \uu,\vv\in\HH.
\end{split}
\end{align}
The main result of this section reads as follows:

\begin{thm}\label{thm:bmc_ell}
Under Assumption~\ref{ass1} and provided that the ellipticity constant
$\c{ellA}$ of $\A$ fulfills $\c{ellA}>1/4$, the operator $\BB{bmc}$ is
strongly monotone and Lipschitz continuous.
\end{thm}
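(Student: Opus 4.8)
The plan is to verify the two defining inequalities \eqref{eq:operator:lipschitz} and \eqref{eq:operator:monotone} for $\BB{bmc}$ directly from the definition \eqref{def:bb}. Lipschitz continuity is the routine half: I would estimate $\|\BB{bmc}\uu-\BB{bmc}\vv\|_{\HH^*}$ by splitting into the $\b{bmc}$-part and the rank-one perturbation. For the $\b{bmc}$-part, the term $\dual{\A\nabla u-\A\nabla w}{\nabla v}_\Omega$ is controlled by \eqref{eq:lip_A}, while the boundary terms $\dual{(\tfrac12-\dlp^\dagger)(\phi-\chi)}{v}_\Gamma$, $\dual{\psi}{u-w}_\Gamma$, $\dual{\psi}{\slp(\phi-\chi)}_\Gamma$ are bounded using the mapping properties of $\dlp^\dagger$ and $\slp$ listed in Section~\ref{sec:preliminaries:bem} together with the trace inequality $\norm{v}{H^{1/2}(\Gamma)}\lesssim\norm{v}{H^1(\Omega)}$. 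The rank-one term $\dual{\xi}{\slp(\Phi_\ell-\chi)-(U_\ell-w)}_\Gamma\,\dual{\xi}{\slp\Psi_\ell-V_\ell}_\Gamma$ is a product of two factors each bounded by $C\,\nnorm{\cdot}$, again by boundedness of $\slp$, the trace inequality, and the fixed choice of $\xi$; this gives Lipschitz continuity with a constant depending on the geometry, on $\c{lipA}$, and on $\xi$.

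The substantive half is strong monotonicity. Setting $\ww := \uu-\vv = (w,\eta)$ with $w\in H^1(\Omega)$, $\eta\in H^{-1/2}(\Gamma)$, the bilinearity in the second argument of the perturbation and the structure of $\b{bmc}$ give
\begin{align*}
 \dual{\BB{bmc}\uu-\BB{bmc}\vv}{\ww}
 &= \dual{\A\nabla u-\A\nabla v}{\nabla w}_\Omega
 + \dual{(\tfrac12-\dlp^\dagger)\eta}{w}_\Gamma\\
 &\quad - \dual{\eta}{w}_\Gamma + \dual{\eta}{\slp\eta}_\Gamma
 + \dual{\xi}{\slp\eta-w}_\Gamma^2.
\end{align*}
Here I have used $\dual{\eta}{w}_\Gamma=\dual{w}{\eta}_\Gamma$ to combine the cross terms. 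The key algebraic observation is that $\dual{(\tfrac12-\dlp^\dagger)\eta}{w}_\Gamma-\dual{w}{\eta}_\Gamma = -\dual{(\tfrac12+\dlp^\dagger)\eta}{w}_\Gamma$, i.e. the problematic cross term is $-\dual{w}{(\tfrac12+\dlp)\eta}_\Gamma$ after moving $\dlp^\dagger$ onto $w$. One then applies the weighted Young inequality $|\dual{w}{(\tfrac12+\dlp)\eta}_\Gamma|\le \tfrac{1}{4\delta}\norm{w}{H^{1/2}(\Gamma)}^2 + \delta\,\norm{(\tfrac12+\dlp)\eta}{H^{-1/2}(\Gamma)}^2$ and uses the ellipticity of $\A$, \eqref{eq:ell_A}, to absorb $\tfrac{1}{4\delta}\norm{w}{H^{1/2}(\Gamma)}^2$ into $\c{ellA}\norm{\nabla w}{L^2(\Omega)}^2$ — this is exactly where $\c{ellA}>1/4$ enters, since the trace-norm constant combines with $1/(4\delta)$ and must stay strictly below $\c{ellA}$ for a suitable $\delta$. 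The remaining boundary contribution $\dual{\eta}{\slp\eta}_\Gamma - \delta\,\norm{(\tfrac12+\dlp)\eta}{H^{-1/2}(\Gamma)}^2 + \dual{\xi}{\slp\eta-w}_\Gamma^2$ must then be shown to dominate $c\,\norm{\eta}{H^{-1/2}(\Gamma)}^2$ for small enough $\delta$: the first term is coercive by ellipticity of $\slp$, the second is a small perturbation bounded via continuity of $\dlp$, and the last is nonnegative. (Following \cite{sayas09,s2}, one likely needs a Poincaré-type argument to handle the constant functions: $\nabla w$ alone only controls $w$ up to constants, and the term $\dual{\xi}{\slp\eta-w}_\Gamma^2$ with $\dual{\xi}{1}_\Gamma\neq0$ from Assumption~\ref{ass1} is precisely what pins down the constant part of $w$.)

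The main obstacle is the bookkeeping of constants in the monotonicity estimate: one must choose the Young parameter $\delta$ and simultaneously control (i) the norm of the trace operator $H^1(\Omega)\to H^{1/2}(\Gamma)$, (ii) the operator norm of $\tfrac12+\dlp$, and (iii) the coercivity constant of $\slp$, in such a way that all positive lower-order terms are absorbed and a genuinely positive multiple of $\nnorm{\ww}^2=\norm{w}{H^1(\Omega)}^2+\norm{\eta}{H^{-1/2}(\Gamma)}^2$ survives; the hypothesis $\c{ellA}>1/4$ is exactly the threshold making this possible, mirroring the linear analysis of \cite{s2}. I would organize the estimate so that the $\delta\to0$ limit makes the $\eta$-coercivity manifest and then fix $\delta$ small, and handle the constant-function issue by an explicit splitting $w = w_0 + \overline{w}$ into mean-zero part and constant, controlling $\overline{w}$ through the $\xi$-term.
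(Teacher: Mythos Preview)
Your Lipschitz argument is fine and matches the paper. The gap is in the monotonicity estimate: the way you propose to treat the cross term $-\dual{(\tfrac12+\dlp^\dagger)\eta}{w}_\Gamma$ will \emph{not} produce the threshold $\c{ellA}>1/4$. If you apply Young's inequality in the $H^{1/2}\times H^{-1/2}$ duality as you suggest, the two competing constraints are (a) $\c{ellA}>C_{\rm tr}^2/(4\delta)$ to absorb the trace term into $\c{ellA}\norm{\nabla w}{L^2(\Omega)}^2$, and (b) $\delta<c_{\slp}/\|\tfrac12+\dlp\|^2$ to keep the $\eta$-coercivity. These are simultaneously satisfiable only if $\c{ellA}>C_{\rm tr}^2\|\tfrac12+\dlp\|^2/(4c_{\slp})$, a geometry-dependent constant that is in general \emph{not} $1/4$. (Your own sketch actually reveals the tension: you want $\delta$ small for the $\eta$-part, but $\delta$ small forces $1/(4\delta)$ large, not ``strictly below $\c{ellA}$''.) So your plan proves strong monotonicity only under a stronger, non-explicit hypothesis on $\c{ellA}$, not the stated one.

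The paper's route avoids all operator-norm and trace constants by a representation trick going back to \cite{sayas09,s2}: write $u_*=\widetilde\slp\chi_*$ for (the mean-zero part of) $\chi=\eta$, use $\partial_\NV u_*=(\tfrac12+\dlp^\dagger)\chi_*$ and Green's formula to convert the cross term into the \emph{volume} integral $-\dual{\nabla u_*}{\nabla w}_\Omega$, and then Cauchy--Schwarz against $\norm{\nabla u_*}{L^2(\R^d)}$. The crucial energy identity $\norm{\nabla u_*}{L^2(\R^d)}^2=\dual{\chi_*}{\slp\chi_*}_\Gamma$ lets you compare this \emph{directly} with $\dual{\eta}{\slp\eta}_\Gamma$, with coefficient exactly $1$; Young's inequality then yields $(\c{ellA}-\tfrac\delta2)\norm{\nabla w}{L^2}^2+(1-\tfrac{1}{2\delta})\dual{\eta}{\slp\eta}_\Gamma$, and both coefficients are positive iff $\c{ellA}>1/4$. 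The constant-function issue is handled not by an explicit splitting of $w$ but by Lemma~\ref{lemma:equiv_norm}, which packages $\norm{\nabla w}{L^2}^2+\dual{\eta}{\slp\eta}_\Gamma+|\dual{\xi}{\slp\eta-w}_\Gamma|^2$ as an equivalent norm on $\HH$.
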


The proof requires the following lemma which is proved by means of a Rellich compactness argument.

\begin{lem}\label{lemma:equiv_norm}
For $\uu = (u,\phi)\in\HH$, let
\begin{align}
 \enorm{\uu}^2
 :=\norm{\nabla u}{L^2(\Omega)}^2 + \dual\phi{\slp\phi}_\Gamma +
 \lvert \dual\xi{\slp\phi-u}_\Gamma\rvert^2,
\end{align}
where $\xi$ is provided by Assumption~\ref{ass1}.
Then, $\enorm\cdot$ defines an equivalent norm on $\HH$.
\end{lem}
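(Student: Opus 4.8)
The plan is to show that $\enorm{\cdot}$ is a norm equivalent to $\nnorm{\cdot}$ on $\HH$, i.e.\ that there exist constants $c_1,c_2>0$ with $c_1\nnorm{\uu}^2\le\enorm{\uu}^2\le c_2\nnorm{\uu}^2$ for all $\uu=(u,\phi)\in\HH$. The upper bound is the routine direction: using the continuity $\norm{\nabla u}{L^2(\Omega)}\le\norm{u}{H^1(\Omega)}$, the continuity and ellipticity-induced norm equivalence $\dual\phi{\slp\phi}_\Gamma\simeq\norm\phi{H^{-1/2}(\Gamma)}^2$, and the boundedness of $\slp:H^{-1/2}(\Gamma)\to H^{1/2}(\Gamma)$ together with the trace inequality $\norm{u}{H^{1/2}(\Gamma)}\lesssim\norm u{H^1(\Omega)}$ and $\dual\xi{\cdot}_\Gamma$ being a bounded functional, one estimates each of the three terms by $\nnorm{\uu}^2$.

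For the lower bound, the difficulty is that $\norm{\nabla u}{L^2(\Omega)}$ only controls the $H^1$-seminorm of $u$, not the full $H^1(\Omega)$-norm, so one must recover control of $\norm{u}{L^2(\Omega)}$ from the remaining terms. The key observation is that the only obstruction is the constant functions: on the quotient $H^1(\Omega)/\R$ the seminorm $\norm{\nabla u}{L^2(\Omega)}$ is an equivalent norm, and the term $\lvert\dual\xi{\slp\phi-u}_\Gamma\rvert$ together with $\dual\phi{\slp\phi}_\Gamma$ pins down the constant part. Concretely, decompose $u = u_\perp + c$ where $c=\frac1{|\Omega|}\int_\Omega u$ and $\int_\Omega u_\perp=0$; then $\norm{\nabla u_\perp}{L^2(\Omega)}=\norm{\nabla u}{L^2(\Omega)}$ and by the Poincaré--Wirtinger inequality $\norm{u_\perp}{H^1(\Omega)}\lesssim\norm{\nabla u}{L^2(\Omega)}$. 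It remains to bound $|c|$: from the third term, $|c|\,|\dual\xi1_\Gamma| = |\dual\xi c_\Gamma| \le |\dual\xi{\slp\phi-u}_\Gamma| + |\dual\xi{\slp\phi-u_\perp}_\Gamma| \le \enorm{\uu} + C(\norm\phi{H^{-1/2}(\Gamma)}+\norm{u_\perp}{H^1(\Omega)})\lesssim\enorm{\uu}$, using $\dual\xi1_\Gamma\neq0$ from Assumption~\ref{ass1} and that $\norm\phi{H^{-1/2}(\Gamma)}^2\simeq\dual\phi{\slp\phi}_\Gamma\le\enorm{\uu}^2$. Combining, $\norm{u}{H^1(\Omega)}\le\norm{u_\perp}{H^1(\Omega)}+|c|\,|\Omega|^{1/2}\lesssim\enorm{\uu}$, and adding the control of $\norm\phi{H^{-1/2}(\Gamma)}$ gives $\nnorm{\uu}\lesssim\enorm{\uu}$.

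An alternative and perhaps cleaner route for the lower bound, which avoids the explicit constant-splitting, is the Rellich/compactness argument hinted at in the paper: suppose $\nnorm{\cdot}\lesssim\enorm{\cdot}$ fails, so there is a sequence $\uu_n=(u_n,\phi_n)$ with $\nnorm{\uu_n}=1$ and $\enorm{\uu_n}\to0$. From $\enorm{\uu_n}\to0$ one gets $\norm{\nabla u_n}{L^2(\Omega)}\to0$, $\norm{\phi_n}{H^{-1/2}(\Gamma)}\to0$, and $\dual\xi{\slp\phi_n-u_n}_\Gamma\to0$. Since $\norm{\phi_n}{H^{-1/2}(\Gamma)}\to0$ and $\norm{u_n}{H^1(\Omega)}\le1$, the sequence $u_n$ is bounded in $H^1(\Omega)$, hence (Rellich) has a subsequence converging in $L^2(\Omega)$; together with $\norm{\nabla u_n}{L^2(\Omega)}\to0$ this forces $u_n\to c$ in $H^1(\Omega)$ for some constant $c\in\R$, and $\norm{u_n}{H^1(\Omega)}^2=1-\norm{\phi_n}{H^{-1/2}(\Gamma)}^2\to1$ shows $c\neq0$. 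But then $0=\lim\dual\xi{\slp\phi_n-u_n}_\Gamma = -\dual\xi c_\Gamma = -c\,\dual\xi1_\Gamma\neq0$, a contradiction.

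The main obstacle, in either approach, is precisely handling the constant functions — ensuring that the functional $\dual\xi{\cdot}_\Gamma$ genuinely detects them, which is exactly where Assumption~\ref{ass1} ($\dual\xi1_\Gamma\neq0$) enters and is indispensable; every other estimate is a direct consequence of the mapping properties of $\slp$ and standard Sobolev embeddings already recorded in Section~\ref{sec:preliminaries}. I would present the compactness argument as the main proof since it is the shortest, and it is what the paper signals by "a Rellich compactness argument."
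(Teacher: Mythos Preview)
Your proposal is correct. The compactness argument you give as the ``alternative'' route is essentially the paper's own proof: the paper normalizes, extracts a weakly convergent subsequence, uses Rellich to upgrade to strong $L^2$-convergence, and then weak lower semicontinuity of $\enorm{\cdot}$ to conclude that the limit $(v,0)$ satisfies $\nabla v=0$ and $\dual\xi v_\Gamma=0$, forcing $v=0$ and yielding the contradiction. Your version is slightly more direct in that you observe $\nabla u_n\to0$ together with $L^2$-convergence already gives strong $H^1$-convergence, so you can bypass the weak lower semicontinuity step; but the skeleton is the same.

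Your first argument, via the Poincar\'e--Wirtinger decomposition $u=u_\perp+c$, is a genuinely different and more constructive route that the paper does not pursue. Its advantage is that it yields, in principle, an explicit equivalence constant (depending on the Poincar\'e constant of $\Omega$, the trace constant, the operator norms of $\slp$, and $|\dual\xi1_\Gamma|^{-1}$), whereas the compactness proof gives no quantitative information. The compactness proof, on the other hand, is shorter and makes the role of Assumption~\ref{ass1} transparent in a single line. Both are perfectly valid; presenting the compactness argument as your main proof, as you propose, aligns with the paper.
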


\begin{proof}
Clearly, there holds $\enorm{\uu} \lesssim \nnorm\uu$ for all
$\uu = (u,\phi) \\ \in \HH$.
To see the converse
estimate, we argue by contradiction and assume that $\nnorm{\uu_n} > n
\enorm{\uu_n}$ for certain $\uu_n = (u_n,\phi_n)$ and all $n\in \N$. We define $\vv_n = (v_n,\psi_n)$ by
\begin{align*}
  \vv_n := \frac{\uu_n}{\nnorm{\uu_n}}
\end{align*}
and obtain $\nnorm{(v_n,\psi_n)} = 1$ as well as $\enorm{(v_n, \psi_n)} < 1/n$. By
definition of $\enorm\cdot$ and ellipticity of $\slp$,
this implies $\psi_n \rightarrow 0 \in
H^{-1/2}(\Gamma)$ and $\nabla v_n \rightarrow 0 \in L^2(\Omega)$.
Moreover, by extracting a subsequence, we may assume that $(v_n, \psi_n)
\rightharpoonup (v,\psi)$ in $\HH$. Clearly, this implies $\psi = 0$ and $v_n
\rightharpoonup v \in H^1(\Omega)$, whence $v_n \rightarrow v \in L^2(\Omega)$.
Moreover, weak lower semi-continuity of $\enorm\cdot$ implies $\enorm{(v,\psi)} = 0$,
whence $\nabla v = 0$ and $|\dual{\xi}{v}_\Gamma| = 0$.
From the choice of $\xi$ and since $v$ is constant, we infer $v=0$ and thus
$v_n \rightarrow 0 \in H^1(\Omega)$. Altogether, $\vv_n=(v_n,\psi_n) \rightarrow 0
\in \HH$ contradicts $\nnorm{\vv_n} = 1$.\qed
\end{proof}

\renewcommand\teq{\phi_{\rm eq}}
The following proof of Theorem~\ref{thm:bmc_ell} is very much influenced by the investigations of~\cite{sayas09,s2}. We recall some basic
facts on the boundary integral operators, cf.\ e.g.~\cite[Chapter~6]{s}:
For given $\chi\in H^{-1/2}(\Gamma)$, let
$u_* = \widetilde\slp \chi$. Then
there holds
\begin{itemize}
  \item[$\bullet$] $\partial_\NV u_* = (\tfrac12 + \dlp^\dagger) \chi$,
  \item[$\bullet$] $\dual{\partial_\NV u_*}{v}_\Gamma = \dual{\nabla u_*}{\nabla
          v}_\Omega$
        for all $v\in H^1(\Omega)$,
 \item[$\bullet$] $\dual{\chi}{\slp\chi}_\Gamma =
        \dual{[\partial_\NV u_*]}{\slp\chi}_\Gamma =
        \norm{\nabla u_*}{L^2(\R^d)}^2$,
\end{itemize}
where the last equation is only valid for $d=2$, if $\chi \in
H_*^{-1/2}(\Gamma) = \set{\psi\in H^{-1/2}(\Gamma)}{\dual\psi1_\Gamma = 0}$.
For arbitrary $\chi \in H^{-1/2}(\Gamma)$, we therefore introduce the
splitting
\begin{align}\label{eq:phi_split}
  \chi = \chi_* + \chieq,
\end{align}
with $\chi_*\in H^{-1/2}(\Gamma)$ and $\chieq = \dual\chi1_\Gamma\, \teq$.Here,
$\teq = \slp^{-1}1/\dual{\slp^{-1}1}1_\Gamma$ is the so-called equilibrium density (or: natural density). Note that $\chi_* \in H^{-1/2}_*(\Gamma)$.
Moreover, with the commutativity relation $\dlp^\dagger\slp^{-1} = \slp^{-1}\dlp$ and the
equality $(\tfrac12 + \dlp) 1 = 0$, we infer
\begin{align}
\begin{split}
  \dual{(\tfrac12 + \dlp^\dagger)\slp^{-1}1}{v}_\Gamma &= \dual{\slp^{-1}(\tfrac12 +
  \dlp)1}{v}_\Gamma = 0
\end{split}
\end{align}
for all $v\in H^{1/2}(\Gamma)$.
Together with the splitting~\eqref{eq:phi_split}, this proves
\begin{align}
  \dual{(\tfrac12 + \dlp^\dagger)\chi}{v}_\Gamma = \dual{(\tfrac12 +
  \dlp^\dagger)\chi_*}{v}_\Gamma.
\end{align}
Finally, there holds
\begin{align*}
  \dual{\chi_*}{\slp\chieq}_\Gamma = \dual{\chi_*}1_\Gamma \dual{\chi}1_\Gamma /
  \dual{\slp^{-1}1}1_\Gamma = 0
\end{align*}
and therefore
\begin{align}
\begin{split}\label{eq:splitting:bmc3}
  &\dual{\chi}{\slp\chi}_\Gamma
  = \dual{\chi_*}{\slp\chi_*}_\Gamma + \dual{\chieq}{\slp\chieq}_\Gamma.
\end{split}
\end{align}

\begin{proof}[of Theorem~\ref{thm:bmc_ell}]
Lipschitz continuity of $\BB{bmc}$ simply follows from the Lipschitz continuity of
$\A$ and the continuity of the boundary integral operators.

It thus only remains to show ellipticity of $\BB{bmc}$.
Let $\uu=(u,\phi),\vv=(v,\psi)\in\HH$. Then
\begin{align}\label{eq:bmc_ell_1}
  &\dual{\BB{bmc}\uu-\BB{bmc}\vv}{\uu-\vv}
  \nonumber\\&\quad
  = \dual{\A\nabla u - \A\nabla v}{\nabla u -
  \nabla v}_\Omega
  \nonumber\\&\qquad
  + \dual{(\tfrac12-\dlp^\dagger)(\phi-\psi)}{u-v}_\Gamma
  - \dual{\phi-\psi}{u-v}_\Gamma
  \nonumber\\&\qquad
  + \dual{\phi-\psi}{\slp(\phi-\psi)}_\Gamma
  + \lvert \dual{\xi}{\slp(\phi-\psi)-(u-v)}_\Gamma\rvert^2
  \nonumber\\&\quad
  =: I_1 + I_2 + I_3 + I_4 + I_5.
\end{align}
Below, we show
\begin{align*}
 I_1 \!+\! I_2 \!+\! I_3 \!+\! I_4
 \gtrsim \norm{\nabla u\!-\!\nabla v}{L^2(\Omega)}^2
 + \dual{\phi\!-\!\psi}{\slp(\phi\!-\!\psi)}_\Gamma.
\end{align*}
With Lemma~\ref{lemma:equiv_norm} and the definition of $I_5$, this implies
\begin{align*}
 \dual{\BB{bmc}\uu-\BB{bmc}\vv}{\uu-\vv}
 \gtrsim \enorm{\uu-\vv}^2
\end{align*}
and thus concludes the proof.
\\\textbf{Step~1. }%
To abbreviate the notation, we write $\ww = (w,\chi) \\ = \uu-\vv$. The term $I_1$ is estimated by strong monotonicity~\eqref{eq:ell_A} of $\A$,
\begin{align}\label{eq:bmc_ell_2}
  \dual{\A\nabla u - \A\nabla v}{\nabla w}_\Omega \geq \c{ellA}
  \norm{\nabla w}{L^2(\Omega)}^2.
\end{align}
\textbf{Step~2. }%
With the splitting~\eqref{eq:phi_split} of $\chi$ and $u_* =
\widetilde\slp\chi_*$,
the terms $I_2+I_3$ can be estimated by
\begin{align*}
  I_2 + I_3 = &-\dual{(\tfrac12+\dlp^\dagger)\chi}{w}_\Gamma
  =-\dual{(\tfrac12+\dlp^\dagger)\chi_*}{w}_\Gamma \\
  & = -\dual{\partial_\NV u_*}{w}_\Gamma
  = -\dual{\nabla u_*}{\nabla w}_\Omega \\
  &\geq -\norm{\nabla u_*}{L^2(\Omega)} \norm{\nabla w}{L^2(\Omega)} \\
  &\geq -\norm{\nabla u_*}{L^2(\R^d)} \norm{\nabla
  w}{L^2(\Omega)}.
\end{align*}
\textbf{Step~3. }%
We recall Young's inequality: For arbitrary $a,b\in\R,\delta>0$ there holds
$ab\leq \tfrac\delta{2} a^2 + \tfrac{\delta^{-1}}{2} b^2$. We infer
\begin{align}\label{eq:bmc_ell_3}
\begin{split}
  &-\norm{\nabla u_*}{L^2(\R^d)}\norm{\nabla w}{L^2(\Omega)} \\
  &\quad\geq -\tfrac\delta{2} \norm{\nabla w}{L^2(\Omega)}^2 -
  \tfrac{\delta^{-1}}{2} \norm{\nabla u_*}{L^2(\R^d)}^2.
\end{split}
\end{align}
We combine the second term with $I_4$ and see
\begin{align}\label{eq:bmc_ell_35}
  &-\tfrac{\delta^{-1}}{2} \norm{\nabla u_*}{L^2(\R^d)}^2 +
  \dual{\chi}{\slp\chi}_\Gamma \nonumber\\
  &\quad
  = -\tfrac{\delta^{-1}}{2}\dual{\chi_*}{\slp\chi_*}_\Gamma +
  \dual{\chi_*}{\slp\chi_*}_\Gamma +
  \dual{\chieq}{\slp\chieq}_\Gamma \nonumber\\
  &\quad
  = (1-\tfrac{\delta^{-1}}2) \dual{\chi_*}{\slp\chi_*}_\Gamma +
  \dual{\chieq}{\slp\chieq}_\Gamma \nonumber\\
  &\quad
  \geq (1-\tfrac{\delta^{-1}}2)\,\dual{\chi}{\slp\chi}_\Gamma,
\end{align}
\textbf{Step~4. }%
We combine~\eqref{eq:bmc_ell_1}--\eqref{eq:bmc_ell_35} and obtain
\begin{align*}
\begin{split}
  \dual{\BB{bmc}\uu-\BB{bmc}\vv}{\ww} &
  \geq (\c{ellA}-\tfrac\delta{2})\, \norm{\nabla w}{L^2(\Omega)}^2
  \\&
  + (1-\tfrac{\delta^{-1}}2)\,
  \dual{\chi}{\slp\chi}_\Gamma
  \\&
  + \lvert \dual{\xi}{\slp\chi-w}_\Gamma\rvert^2.
\end{split}
\end{align*}
We have assumed that $\c{ellA}>1/4$. Hence, there exists some $\delta > 0$ with
$1/4<\delta/2<\c{ellA}$. Furthermore, such a $\delta$ implies
$\c{ellA}-\tfrac\delta{2}>0$ as well as $1-\tfrac{\delta^{-1}}{2}>0$.
We define
$\c{ell} := \min\{\c{ellA}-\tfrac\delta{2},1-\tfrac{\delta^{-1}}2\}$ and
end up with
\begin{align*}
  \dual{\BB{bmc}\uu-\BB{bmc}\vv}{\ww} &\geq \c{ell} \,\enorm{\ww}^2.
\end{align*}
With Lemma~\ref{lemma:equiv_norm}, this proves ellipticity of $\BB{bmc}$.\qed
\end{proof}

\begin{rem}
  (i)
  In the case $d=3$, there holds
  \begin{align*}
    \norm{\nabla u_*}{L^2(\R^d)}^2 = \dual{\chi}{\slp\chi}_\Gamma
  \end{align*}
  for all $\chi\in H^{-1/2}(\Gamma)$ and $u_* = \widetilde\slp \chi$. Then, the
  proof of Theorem~\ref{thm:bmc_ell} simplifies, because the
  splitting~\eqref{eq:phi_split} is not needed and one may simply choose $\chi=\chi_*$.
  \\(ii)
  The assumption $\c{ellA}>1/4$ is sufficient, but may not be necessary.
  Numerical experiments for a linear operator $\A$ have shown
  that the bound $1/4$ is not sharp, i.e. solvability
  seems to be given also for $0<\c{ellA}\le1/4$.
\end{rem}

Finally, we may apply the standard results from the theory on strongly monotone
operators, see Section~\ref{sec:preliminaries:monotone}, to prove in
conjunction with Theorem~\ref{thm:equiv_b} the following corollary.

\begin{cor}\label{cor:cea}
Under the assumptions of Theorem~\ref{thm:bmc_ell}, the Bielak-MacCamy
coupling~\eqref{eq:bmc_b} admits a unique solution $\uu\in\HH$. Moreover,
Galerkin approximations $\UU_\ell\in\HH_\ell$ of~\eqref{eq:bmc_b_galerkin} are
quasi-optimal in the sense of~\eqref{eq:cea}.
\end{cor}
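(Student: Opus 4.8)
\emph{Proof proposal.} The plan is to combine the exact equivalence provided by Theorem~\ref{thm:equiv_b} with the abstract solvability theory for strongly monotone operators recalled in Section~\ref{sec:preliminaries:monotone}. First I would observe that, since $\bb{bmc}(\cdot,\cdot)$ is linear in its second argument, it induces the operator $\BB{bmc}:\HH\to\HH^*$ from~\eqref{def:B_BB}, and that the modified right-hand side $\VV_\ell\mapsto\L(\VV_\ell)+\dual\xi{\linfun_2}_\Gamma\dual\xi{\slp\Psi_\ell-V_\ell}_\Gamma$ is linear and continuous on $\HH$ — here one only needs $\L\in\HH^*$ together with the boundedness of $\slp$ and of the trace operator — so that it defines a functional $\widetilde\L\in\HH^*$.

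By Theorem~\ref{thm:bmc_ell}, the operator $\BB{bmc}$ is Lipschitz continuous with some constant $\c{lip}$ and strongly monotone with some constant $\c{ell}$. Hence the standard results cited in Section~\ref{sec:preliminaries:monotone} (see~\cite[Section~25.4]{zeidler}) apply: for the choice $\HH_\ell=\HH$ they yield a unique $\uu\in\HH$ with $\dual{\BB{bmc}\uu}{\vv}=\widetilde\L(\vv)$ for all $\vv\in\HH$, i.e.\ a unique solution of the stabilized equation~\eqref{eq:bmc_bb} with $\HH_\ell=\HH$. Now Theorem~\ref{thm:equiv_b} — whose hypotheses hold since Assumption~\ref{ass1} in particular gives $(0,\xi)\in\HH$ — states that $\uu$ solves~\eqref{eq:bmc_bb} if and only if it solves~\eqref{eq:bmc_b}. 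Since the two problems have exactly the same solution set and the stabilized one is uniquely solvable, the Bielak-MacCamy coupling~\eqref{eq:bmc_b} admits a unique solution $\uu\in\HH$.

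For the Galerkin discretization I would argue in the same way: the monotone-operator theory provides, for every closed subspace $\HH_\ell\subseteq\HH$, a unique $\UU_\ell\in\HH_\ell$ solving the discrete stabilized problem~\eqref{eq:bmc_bb}, together with the C\'ea-type estimate $\nnorm{\uu-\UU_\ell}\le(\c{lip}/\c{ell})\min_{\VV_\ell\in\HH_\ell}\nnorm{\uu-\VV_\ell}$, where $\uu$ is the continuous solution of the stabilized equation. Invoking Theorem~\ref{thm:equiv_b} once more — both with the subspace $\HH_\ell$ and with $\HH$ — identifies $\UU_\ell$ with the unique Galerkin solution of~\eqref{eq:bmc_b_galerkin} and $\uu$ with the solution of~\eqref{eq:bmc_b}, so that the displayed inequality is precisely~\eqref{eq:cea} with $\c{cea}=\c{lip}/\c{ell}$. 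There is no genuinely hard step; the only point that deserves a moment's care is that the C\'ea estimate transfers verbatim from the stabilized to the original formulation, which is immediate because Theorem~\ref{thm:equiv_b} gives an exact coincidence of solutions — not a perturbation — so no additional consistency term enters the bound.
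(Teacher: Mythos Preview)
Your proposal is correct and follows essentially the same approach as the paper: apply the monotone-operator theory of Section~\ref{sec:preliminaries:monotone} to the stabilized operator $\BB{bmc}$ via Theorem~\ref{thm:bmc_ell}, and then use the equivalence of Theorem~\ref{thm:equiv_b} to transfer unique solvability and the C\'ea estimate to the original formulation. Your write-up is somewhat more detailed than the paper's (you spell out that $\widetilde\L\in\HH^*$ and why the C\'ea bound carries over unchanged), but the logical skeleton is identical.
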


\begin{proof}
We define $\BB{} := \BB{bmc}$, and let $\widetilde\L$ be the right-hand side
of~\eqref{eq:bmc_bb}. According to the main theorem on strongly monotone
operators, the operator equation~\eqref{eq:operator:continuous} and its Galerkin
discretization~\eqref{eq:operator:galerkin} admit unique solutions $\uu\in\HH$
and $\UU_\ell\in\HH_\ell$. Moreover, these satisfy the
quasi-optimality~\eqref{eq:cea}. Finally, Theorem~\ref{thm:equiv_b} proves that $\uu\in\HH$ is the unique solution of~\eqref{eq:bmc_b}, and $\UU_\ell\in\HH_\ell$ is the unique solution of~\eqref{eq:bmc_b_galerkin}.
\qed
\end{proof}

\subsection{Residual-based error estimator}\label{sec:bmc_errest}
Our aim is to derive a reliable residual-based error estimator for the
Bielak-MacCamy coupling in the same manner as in e.g~\cite{cs1995}
or~\cite{aposterjn}.

Let $[\A\nabla U_\ell \cdot \NV]\vert_E$ denote the jump of $\A\nabla U_\ell
\cdot \NV$ over the interior face $E\in\EE_\ell^\Omega$.
We assume additional regularity $\phi_0 \in L^2(\Gamma)$ and $u_0
\in H^1(\Gamma)$ from now on.

\begin{thm}
  \label{thm:bmc_est}
  Suppose that $\uu \in\HH$ is the unique solution of the Bielak-MacCamy
  coupling~\eqref{eq:bmc}, and $\UU_\ell\in\HH_\ell:=\SS^1(\TT_\ell)\times\PP^0(\EE_\ell^\Gamma)$ is its Galerkin
  approximation. Then, there holds
  \begin{align*}
    \c{rel}^{-2} \nnorm{\uu-\UU_\ell}^2 \leq \rho_\ell^2 := \sum_{T\in\TT_\ell} \rho_\ell(T)^2 +
    \sum_{E\in\EE_\ell^\Omega \cup \EE_\ell^\Gamma} \rho_\ell(E)^2.
  \end{align*}
  The volume contributions read
  \begin{align}\label{eq:bmc_errest_vol1}
    \rho_\ell(T)^2 &=h_T^2\norm{f}{L^2(T)}^2 \quad\text{for } T \in\TT_\ell, \\
    \label{eq:bmc_errest_vol2}
    \rho_\ell(E)^2 &=h_E\norm{[\A\nabla U_\ell\cdot \NV]}{L^2(E)}^2
    \quad\text{for } E\in\EE_\ell^\Omega,
  \end{align}
  whereas the boundary contributions read
  \begin{align*}
    \rho_\ell(E)^2 &= h_E\norm{\phi_0 + (\dlp^\dagger-\tfrac12) \Phi_\ell -
        \A\nabla U_\ell\cdot\NV}{L^2(E)}^2 \\
	&\hspace{1cm}+h_E\norm{\nabla_\Gamma(U_\ell - u_0 -\slp\Phi_\ell)}{L^2(E)}^2	
  \end{align*}
  for $E\in\EE_\ell^\Gamma$.
  The constant $\c{rel} >0$ depends only on $\Omega$, $\Gamma$
  and the $\gamma$-shape regularity of $\TT_\ell$ and $\EE_\ell^\Gamma$.
  The symbol $\nabla_\Gamma (\cdot)$ denotes the surface gradient (resp.
  arclength derivative for $d=2$).
\end{thm}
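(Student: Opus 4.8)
The plan is to establish reliability of the residual estimator $\rho_\ell$ by the standard duality/residual technique: bound the error $\nnorm{\uu-\UU_\ell}$ by the dual norm of the residual, then localize that residual using the Galerkin orthogonality together with a quasi-interpolation operator (Scott--Zhang / Cl\'ement type) that preserves the discrete spaces $\SS^1(\TT_\ell)$ and $\PP^0(\EE_\ell^\Gamma)$. Throughout, one works with the stabilized operator $\BB{bmc}$ from Theorem~\ref{thm:bmc_ell}, which is strongly monotone and Lipschitz continuous on $\HH$; in particular there holds the a~posteriori bound
\begin{align*}
 \c{ell}\,\nnorm{\uu-\UU_\ell} \le \sup_{\vv\in\HH,\ \nnorm{\vv}=1} \dual{\BB{bmc}\uu-\BB{bmc}\UU_\ell}{\vv} = \sup_{\vv\in\HH,\ \nnorm{\vv}=1} \bigl(\widetilde\L(\vv) - \bb{bmc}(\UU_\ell,\vv)\bigr),
\end{align*}
so it remains to bound the residual functional $\vv\mapsto \widetilde\L(\vv)-\bb{bmc}(\UU_\ell,\vv)$ in $\HH^*$ by $\rho_\ell$. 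Here one uses that, by Theorem~\ref{thm:equiv_b}, $\UU_\ell$ solves the $\bb{bmc}$-Galerkin equation, hence the residual vanishes on all of $\HH_\ell=\SS^1(\TT_\ell)\times\PP^0(\EE_\ell^\Gamma)$, and the stabilization term is harmless because the correction $\dual\xi{\slp\Phi_\ell-U_\ell-\linfun_2}_\Gamma$ vanishes by Step~1 of the proof of Theorem~\ref{thm:equiv_b}; thus effectively one estimates the \emph{unstabilized} residual $\vv\mapsto \L(\vv)-\b{bmc}(\UU_\ell,\vv)$.

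The key steps, in order: (i) Write out the residual for a test function $\vv=(v,\psi)\in\HH$ using the strong-form identities: the first component is $\dual{f}{v}_\Omega + \dual{\phi_0}{v}_\Gamma - \dual{\A\nabla U_\ell}{\nabla v}_\Omega - \dual{(\tfrac12-\dlp^\dagger)\Phi_\ell}{v}_\Gamma$, which after elementwise integration by parts becomes a sum of volume residuals $f + \div(\A\nabla U_\ell) = f$ (since $U_\ell$ is $\TT_\ell$-piecewise affine and $\A\nabla U_\ell$ piecewise constant, so the divergence vanishes elementwise — hence only $f$ survives) against $v$ on each $T$, plus jump residuals $[\A\nabla U_\ell\cdot\NV]$ on interior faces $E\in\EE_\ell^\Omega$ against $v$, plus a boundary residual $\phi_0 + (\dlp^\dagger-\tfrac12)\Phi_\ell - \A\nabla U_\ell\cdot\NV$ on $\Gamma$ against $v|_\Gamma$. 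The second component is $-\dual{\psi}{u_0}_\Gamma - \bigl(-\dual{\psi}{U_\ell}_\Gamma + \dual{\psi}{\slp\Phi_\ell}_\Gamma\bigr) = \dual{\psi}{U_\ell - u_0 - \slp\Phi_\ell}_\Gamma$; since $\psi$ ranges over $H^{-1/2}(\Gamma)$ against a function in $H^{1/2}(\Gamma)$, one uses the local $H^{1/2}$-seminorm bound (an inverse-free interpolation estimate, e.g.\ $\norm{g}{H^{1/2}(\Gamma)}^2 \lesssim \sum_E h_E \norm{\nabla_\Gamma g}{L^2(E)}^2$ for $g$ with $\PP^0$-mean zero on each $E$, after subtracting the piecewise-constant projection which is annihilated by the $\PP^0$-part of the Galerkin orthogonality) to produce the term $\sum_E h_E\norm{\nabla_\Gamma(U_\ell-u_0-\slp\Phi_\ell)}{L^2(E)}^2$. (ii) Subtract a quasi-interpolant $\VV_\ell\in\HH_\ell$ of $\vv$ (Galerkin orthogonality), apply local trace and approximation estimates $\norm{v-V_\ell}{L^2(T)}\lesssim h_T\norm{\nabla v}{L^2(\omega_T)}$ and $\norm{v-V_\ell}{L^2(E)}\lesssim h_E^{1/2}\norm{\nabla v}{L^2(\omega_E)}$, and Cauchy--Schwarz plus finite overlap of patches to sum up. (iii) For the boundary term tested against $v|_\Gamma\in H^{1/2}(\Gamma)$, use the trace inequality $\norm{v}{H^{1/2}(\Gamma)}\lesssim\norm{v}{H^1(\Omega)}$ together with the localization of the $H^{-1/2}\!-\!H^{1/2}$ duality via $\sum_E h_E\norm{\cdot}{L^2(E)}^2$; this is where one needs $u_0\in H^1(\Gamma)$, $\phi_0\in L^2(\Gamma)$ to make the residual $L^2$-localizable.

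**The main obstacle** is the treatment of the nonlocal boundary-integral terms, specifically passing from the global dual norm of the residual to the local $L^2$-weighted estimator contributions on the boundary mesh: the operators $\slp$ and $\dlp^\dagger$ are nonlocal, so the residual $U_\ell-u_0-\slp\Phi_\ell$ lives in $H^{1/2}(\Gamma)$ rather than $H^1(\Gamma)$, and one must exploit both the extra regularity ($u_0\in H^1(\Gamma)$, mapping properties of $\slp$) and the Galerkin orthogonality against $\PP^0(\EE_\ell^\Gamma)$ (which lets one subtract the piecewise integral mean and invoke a scaled Poincar\'e/interpolation inequality on each boundary element) to obtain an $L^2$-based bound. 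The corresponding localization lemmas for $H^{\pm1/2}(\Gamma)$-norms on shape-regular meshes are by now classical (cf.\ \cite{cs1995,aposterjn} and the references therein) and are invoked here; interior faces and volume terms are handled by the entirely standard Verf\"urth-type argument. One should also verify that the stabilization added in $\bb{bmc}$ contributes nothing to the residual estimate — as noted above, the extra rank-one term is $\dual\xi{\slp\Phi_\ell-U_\ell-\linfun_2}_\Gamma\,\dual\xi{\slp\psi-v}_\Gamma$, and its first factor vanishes because $\UU_\ell$ solves the (original, unstabilized) Galerkin problem — so the estimator depends only on the genuine FEM-BEM residual and no artificial contribution from $\xi$ or $\phieq$ enters, consistent with the remark that the stabilization is used only for the solvability theory.
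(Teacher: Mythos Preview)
Your proposal is correct and follows essentially the same approach as the paper: reduce the error to the dual norm of the residual via the strong monotonicity of $\BB{bmc}$, observe that the stabilization term drops out (the paper does this by noting $\dual{\xi}{\slp(\phi-\Phi_\ell)-(u-U_\ell)}_\Gamma=0$ from the second coupling equation, which is equivalent to your observation), then localize using a Cl\'ement/Scott--Zhang operator $\VV_\ell=(\J_\ell v,0)$ for the FEM part and the elementwise mean-zero property $\dual{1}{U_\ell-u_0-\slp\Phi_\ell}_E=0$ together with the localized $H^{1/2}$-estimate of~\cite{cms} for the BEM part. The only cosmetic difference is that the paper leaves the $\psi$-component of the test function untouched (choosing $\Psi_\ell=0$) rather than projecting it, but this is exactly your ``subtract the piecewise-constant projection'' argument phrased from the other side of the duality.
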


\begin{proof}
  Recall the
definitions~\mbox{\eqref{def:ell_12}--\eqref{eq:bmc_b}}
and~\eqref{def:B_BB} of
  $\B{bmc}$, $\L$, and $\BB{bmc}$.
  Problem~\eqref{eq:bmc} for the exact solution and its Galerkin approximation are equivalently written as
  \begin{align*}
    \dual{\B{bmc}\uu}{\vv} = \L(\vv) \quad\text{for all } \vv \in \HH,
  \end{align*}
  and
    \begin{align*}
    \dual{\B{bmc}\UU_\ell}{\VV_\ell} = \L(\VV_\ell) \quad\text{for all } \VV_\ell \in \HH_\ell.
  \end{align*}
We stress that $\dual{\BB{bmc}\uu-\BB{bmc}\UU_\ell}{\uu-\UU_\ell} -
\dual{\B{bmc}\uu-\B{bmc}\UU_\ell}{\uu-\UU_\ell} =
\lvert\dual{\xi}{\slp(\phi-\Phi_\ell)-(u-U_\ell)}_\Gamma \rvert^2 = 0$,
which follows from the second equation of the Bielak-MacCamy coupling, cf.~\eqref{eq:bmc}. With ellipticity of $\BB{bmc}$, we obtain
\begin{align*}
  \nnorm{\uu-\UU_\ell}^2 &\lesssim \dual{\BB{bmc}\uu-\BB{bmc}\UU_\ell}{\uu-\UU_\ell} \\
  &= \dual{\B{bmc}\uu-\B{bmc}\UU_\ell}{\uu-\UU_\ell} \\
  &\leq \norm{\B{bmc}\uu-\B{bmc}\UU_\ell}{\HH^*}\nnorm{\uu-\UU_\ell}.
\end{align*}
This leads us to
\begin{align*}
  \nnorm{\uu-\UU_\ell} & \lesssim \norm{\B{bmc}\uu-\B{bmc}\UU_\ell}{\HH^*}
  \\&= \sup_{\vv\in\HH\backslash \{0\}}
  \frac{\abs{\L(\vv) - \dual{\B{bmc}\UU_\ell}{\vv}}}{\nnorm{\vv}} \\
  &= \sup_{\vv\in\HH\backslash \{0\}}\frac{\abs{\L(\vv-\VV_\ell) -
  \dual{\B{bmc}\UU_\ell}{\vv-\VV_\ell}}}{\nnorm{\vv}}
\end{align*}
for arbitrary $\VV_\ell \in \HH_\ell$.
We choose $\VV_\ell = (\J_\ell v, 0) \in \HH_\ell$, where $\J_\ell: H^1(\Omega)
\rightarrow \SS^1(\TT_\ell)$ denotes a Cl\'ement-type quasi-interpolation
operator, which satisfies a local first-order approximation property
\begin{align}\label{eq:intop_approx}
  \norm{w-\J_\ell w}{L^2(T)} \lesssim \diam(T) \norm{\nabla w}{L^2(\omega_T)}
\end{align}
and local $H^1$-stability
\begin{align}\label{eq:intop_h1stab}
  \norm{\nabla(w-\J_\ell w)}{L^2(T)} \lesssim \norm{\nabla w}{L^2(\omega_T)}
\end{align}
for all $w \in H^1(\Omega)$ and $T\in\TT_\ell$.
Here, $\omega_T = \bigcup \{ T' \in\TT_\ell \, \vert\, T'\cap T \neq \emptyset\}$
denotes the patch of an element $T\in\TT_\ell$.
An example for such an operator $\J_\ell$ is the Cl\'ement operator~\cite{ao00} or
the Scott-Zhang projection~\cite{scottzhang}.
Note that the constants in the
estimates~\eqref{eq:intop_approx}--\eqref{eq:intop_h1stab} only depend on $\Omega$ and the
$\gamma$-shape regularity of $\TT_\ell$.
An immediate consequence of these properties and the
trace inequality is that
\begin{align}\label{eq:intop_trace}
  \norm{w-\J_\ell w}{L^2(E)} \lesssim \diam(E)^{1/2} \norm{w}{H^1(\omega_T)},
\end{align}
where $E$ is a face of the element $T \in \TT_\ell$ with $E\subseteq T$.
Note that
\begin{align}
  \begin{split}\label{eq:errest_1}
    &\L(\vv-\VV_\ell) - \dual{\B{bmc}\UU_\ell}{\vv-\VV_\ell} \\
    &\quad= \dual{f}{v-\J_\ell v}_\Omega
    -\dual{\A\nabla U_\ell}{\nabla(v-\J_\ell v)}_\Omega \\
    &\qquad +\dual{\phi_0 - (\tfrac12-\dlp^\dagger) \Phi_\ell}{v - \J_\ell v}_\Gamma
    \\&\qquad+ \dual{\psi}{U_\ell-u_0-\slp\Phi_\ell}_\Gamma.
  \end{split}
  \end{align}
  The first term on the right-hand side is estimated by use of Cauchy
  inequalities and~\eqref{eq:intop_approx}. This gives
    \begin{align*}
      &\dual{f}{v-\J_\ell v}_\Omega = \sum_{T\in\TT_\ell} \dual{f}{v-\J_\ell
      v}_T \\
      &\quad\leq \sum_{T\in\TT_\ell} \norm{f}{L^2(T)}\norm{v-\J_\ell v}{L^2(T)} \\
      &\quad\lesssim \sum_{T\in\TT_\ell} \diam(T)\norm{f}{L^2(T)}
      \norm{\nabla v}{L^2(\omega_T)} \\
      &\quad\lesssim \Big(\sum_{T\in\TT_\ell} \norm{h_\ell f}{L^2(T)}^2 \Big)^{1/2}
      \Big(\sum_{T\in\TT_\ell} \norm{\nabla v}{L^2(\omega_T)}^2 \Big)^{1/2} \\
      &\quad\lesssim \norm{h_\ell f}{L^2(\Omega)} \norm{\nabla v}{L^2(\Omega)}.
    \end{align*}
Piecewise integration by parts of the second term on the right-hand side of
\eqref{eq:errest_1} yields
  \begin{align*}
    &\dual{\A\nabla U_\ell}{\nabla(v-\J_\ell v)}_\Omega = \sum_{T\in\TT_\ell}
    \dual{\A\nabla U_\ell}{\nabla(v-\J_\ell v)}_T \\
    &= \sum_{T\in\TT_\ell} \dual{-\div\A\nabla U_\ell}{v-\J_\ell v}_T 
    +\dual{\A\nabla U_\ell \cdot \NV}{v-\J_\ell v}_{\partial T} \\
    &= \dual{\A\nabla U_\ell \cdot \NV}{v-\J_\ell v}_\Gamma 
    + \sum_{E\in\EE_\ell^\Omega}
    \dual{[\A\nabla U_\ell\NV]}{v-\J_\ell v}_E,
  \end{align*}
  since $\div\A\nabla U_\ell$ vanishes elementwise because of
  $\A\nabla U_\ell \in \PP^0(\TT_\ell)$.
  The second and third term on the right-hand side of~\eqref{eq:errest_1}
  can thus be estimated by
  \begin{align*}
    &\dual{\phi_0 - (\tfrac12-\dlp^\dagger) \Phi_\ell-\A\nabla U_\ell\cdot\NV}{v
    - \J_\ell v}_\Gamma \\
    & \hspace{1cm}- \sum_{E\in\EE_\ell^\Omega}
    \dual{[\A\nabla U_\ell\cdot\NV]}{v-\J_\ell v}_E \\
    &\leq \sum_{E\in\EE_\ell^\Gamma} \norm{\phi_0 -
    (\tfrac12-\dlp^\dagger) \Phi_\ell - \A\nabla U_\ell\cdot\NV}{L^2(E)}
    \norm{v-\J_\ell v}{L^2(E)} \\
    &+ \sum_{E\in\EE_\ell^\Omega} \norm{[\A\nabla U_\ell\cdot\NV]}
    {L^2(E)}\norm{v-\J_\ell v}{L^2(E)} \\
    &=: J_1 + J_2
  \end{align*}
  For each boundary face $E$ we fix the unique element $T_E$ with $E \subseteq
  T_E$ and infer by use of~\eqref{eq:intop_trace}
  \begin{align*}
     J_1
     &\lesssim \sum_{E\in\EE_\ell^\Gamma} \diam(E)^{1/2} \\
     &\hspace{1cm} \norm{\phi_0 -
     (\tfrac12-\dlp^\dagger) \Phi_\ell - \A\nabla U_\ell\cdot\NV}{L^2(E)}
     \norm{v}{H^1(\omega_{T_E})} \\
      &\lesssim \norm{h_\ell^{1/2}(\phi_0-(\tfrac12-\dlp^\dagger) \Phi_\ell
      -\A\nabla U_\ell\cdot\NV)}{L^2(\Gamma)} \norm{v}{H^1(\Omega)}.
  \end{align*}
  For an interior face $E$ we fix some element $T_E$ with $E \subseteq T_E$ and
  estimate the term $J_2$ analogously by
  \begin{align*}
    J_2 \lesssim \Big(
    \sum_{E\in\EE_\ell^\Omega} \norm{h_\ell^{1/2} [\A\nabla
    U_\ell \cdot\NV]}{L^2(E)}^2\Big)^{1/2} \norm{v}{H^1(\Omega)}.
  \end{align*}
 To estimate the fourth term in~\eqref{eq:errest_1} we observe
  \begin{align*}
    \dual{1}{U_\ell-u_0-\slp\Phi_\ell}_E = 0
  \end{align*}
  for any $E\in\EE_\ell^\Gamma$, which follows from the
  second equation of~\eqref{eq:bmc} tested with the characteristic function of
  $E$, which belongs to $\PP^0(\EE_\ell^\Gamma)$.
  Now,~\cite[Corollary~4.2]{cms} can be applied and proves
  \begin{align*}
    &\norm{U_\ell-u_0-\slp\Phi_\ell}{H^{1/2}(\Gamma)}^2 \\
    &\hspace{0.5cm}\leq \c{loc}
    \sum_{E\in\EE_\ell^\Gamma} \diam(E)\, \norm{\nabla_\Gamma
    (U_\ell-u_0-\slp\Phi_\ell)}{L^2(E)}^2,
  \end{align*}
  where $\c{loc} > 0$ depends only on $\Gamma$ and the $\gamma$-shape
  regularity of $\EE_\ell^\Gamma$.
  This leads us to
  \begin{align*}
    &\dual{\psi}{U_\ell-u_0-\slp\Phi_\ell}_\Gamma \\
    &\hspace{0.5cm} \leq \norm{\psi}{H^{-1/2}(\Gamma)}
    \norm{U_\ell-u_0-\slp\Phi_\ell}{H^{1/2}(\Gamma)} \\
    &\hspace{0.5cm} \lesssim \norm{\psi}{H^{-1/2}(\Gamma)}
    \norm{h_\ell^{1/2}\nabla_\Gamma(U_\ell -u_0 -
    \slp\Phi_\ell)}{L^2(\Gamma)}.
  \end{align*}
  In 2D, the same estimate can also be obtained by use of the continuity of
  $U_\ell- u_0-\slp\Phi_\ell\in H^1(\Gamma)$, cf.~\cite{cs1995}.
  Altogether, we have
  \begin{align*}
    &\abs{\L(\vv-\VV_\ell)-\dual{\B{bmc}\UU_\ell}{\vv-\VV_\ell}}/
\nnorm{\vv}\\
    &\hspace{0.5cm}\lesssim \norm{h_\ell f}{L^2(\Omega)} +
    \norm{h_\ell^{1/2} [\A\nabla U_\ell
    \cdot\NV]}{L^2(\bigcup\EE_\ell^\Omega)}
    \\ & \hspace{1cm}+ \norm{h_\ell^{1/2}(\phi_0-(\tfrac12-\dlp^\dagger)
    \Phi_\ell - \A\nabla U_\ell\cdot\NV)}{L^2(\Gamma)} \\
    &\hspace{1cm}+\norm{h_\ell^{1/2}\nabla_\Gamma(U_\ell
    - u_0 - \slp\Phi_\ell)}{L^2(\Gamma)} ,
  \end{align*}
  which concludes the proof.\qed
\end{proof}

\section{Johnson-N\'ed\'elec coupling}
\label{sec:jn}
In this section, we present the Johnson-N\'ed\'elec coupling, which first
appeared in~\cite{johned}. As in the previous section, we state the continuous and discrete
formulation of this method and discuss existence and uniqueness of the corresponding solutions.
Finally, we provide a reliable resi\-du\-al-based error
estimator.

\subsection{Derivation of Johnson-N\'ed\'elec coupling}
Unlike the Bielak-MacCamy coupling, we represent the exterior solution
$u^{\rm ext}$ by use of the third Green's identity in the exterior domain $\Omega^{\rm ext}$,
\begin{align}\label{eq:jn_rep_form}
  u^{\rm ext} = \widetilde\dlp u^{\rm ext} - \widetilde\slp\phi,
\end{align}
where we define $\phi = \partial_\NV u^{\rm ext}$.
As above $\widetilde\slp$ and $\widetilde\dlp$ are defined as $\slp$ and $\dlp$, but are now evaluated in $\Omega^{\rm ext}$ instead of $\Gamma$.
Taking the trace in~\eqref{eq:jn_rep_form}, we see
\begin{align}
u^{\rm ext} = (\dlp+\tfrac12)u^{\rm ext} - \slp\phi.
\end{align}
Using the trace jump condition~\eqref{eq:strongform:trace}, we obtain
\begin{align}
 (\tfrac12-\dlp)u + \slp \phi = (\tfrac12-\dlp)u_0.
\end{align}
Together with~\eqref{eq:bmc_glg1.1}, the variational formulation of the latter equation provides the Johnson-N\'ed\'elec coupling:
Find $\uu=(u,\phi) \in \HH$ such that
  \begin{align}
    \begin{split}\label{eq:jn}
      \dual{\A\nabla u}{\nabla v}_\Omega - \dual{\phi}{v}_\Gamma &= \dual{f}{v}_\Omega +
       \dual{\phi_0}{v}_\Gamma, \\
       \dual{\psi}{(\tfrac12-\dlp)u}_\Gamma + \dual{\psi}{\slp\phi}_\Gamma
       &= \dual{\psi}{(\tfrac12-\dlp)u_0}_\Gamma,
    \end{split}
  \end{align}
  holds for all $\vv=(v,\psi) \in \HH$.

\subsection{Stabilization}\label{sec:jn_solv}
For the Johnson-N\'ed\'elec equations, we can apply similar techniques and derive similar results as for the
Bielak-MacCamy coupling, see Section~\ref{sec:bmc}.
For the sake of completeness, we state these results in the following.

We define the linear form $b_{\rm jn}(\cdot,\cdot)$ for $\uu,\vv\in\HH$ by
\begin{align*}
  \b{jn}(\uu,\vv) := \dual{\A\nabla u}{\nabla v}_\Omega &-
  \dual{\phi}{v}_\Gamma \\
  &+ \dual{\psi}{(\tfrac12-\dlp)u + \slp\phi}_\Gamma.
\end{align*}
Furthermore, we define linear functionals $\linfun_1$ and $\linfun_2$ by
\begin{align*}
  \linfun_1(v) &:= \dual{f}{v}_\Omega + \dual{\phi_0}{v}_\Gamma, \\
  \linfun_2(\psi) &:= \dual{\psi}{(\tfrac12-\dlp)u_0}_\Gamma
\end{align*}
for all $(v,\psi)\in\HH$.
Then, problem~\eqref{eq:jn} can be reformulated: Find $\uu \in\HH$ such that
\begin{align}\label{eq:jn_b}
  \b{jn}(\uu,\vv) = \L(\vv) := \linfun_1(v) + \linfun_2(\psi)
\end{align}
holds for all $\vv=(v,\psi)\in\HH$.
Moreover, the Galerkin discretization of~\eqref{eq:jn_b} reads: Find
$\UU_\ell \in \HH_\ell$ such that
\begin{align}\label{eq:jn_b_galerkin}
  \b{jn}(\UU_\ell,\VV_\ell) = \L(\VV_\ell)
\end{align}
holds for all $\VV_\ell \in\HH_\ell$, where $\HH_\ell = \XX_\ell\times\YY_\ell$ is a closed subspace of $\HH$. Similarly to Theorem~\ref{thm:equiv_b}, one proves the following result:

\begin{thm}\label{thm:equiv_b_jn}
With $\xi$ of Assumption~\ref{ass1}, the linear form
\begin{align}\label{eq:jn_bb:def}
  \bb{jn}(\uu,\vv) &:= \b{jn}(\uu,\vv) \\
  &+ \dual{\xi}{(\tfrac12 - \dlp)u + \slp\phi}_\Gamma \dual{\xi}{ (\tfrac12- \dlp)v + \slp\psi}_\Gamma, \nonumber
\end{align}
is equivalent to the linear form $\b{jn}(\cdot,\cdot)$ in the following sense:
The pair $\UU_\ell = (U_\ell,\Phi_\ell) \in\HH_\ell$ solves problem~\eqref{eq:jn_b_galerkin} if and only if it solves
  \begin{align}\label{eq:jn_bb}
  \begin{split}
    \bb{jn}(\UU_\ell,\VV_\ell) &= \linfun_1(V_\ell) + \linfun_2(\Psi_\ell) \\
    &+ \dual\xi{\linfun_2}_\Gamma\dual\xi{(\tfrac12-\dlp)V_\ell + \slp\Psi_\ell}_\Gamma
  \end{split}
  \end{align}
  for all $\VV_\ell = (V_\ell,\Psi_\ell) \in\HH_\ell$.\qed
\end{thm}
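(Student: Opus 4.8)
The plan is to mimic the proof of Theorem~\ref{thm:equiv_b} line by line, since the stabilization term in~\eqref{eq:jn_bb:def} has exactly the same structure: it is the product of the two copies of the linear functional $\vv \mapsto \dual{\xi}{(\tfrac12-\dlp)v + \slp\psi}_\Gamma$, one evaluated at the solution and one at the test function, and the second equation of the Johnson-N\'ed\'elec coupling~\eqref{eq:jn} states precisely that this functional applied to the exact solution equals $\linfun_2(\xi) = \dual{\xi}{(\tfrac12-\dlp)u_0}_\Gamma$. So the key algebraic identity driving both directions is the same.

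For the first implication, I would take a solution $\UU_\ell = (U_\ell,\Phi_\ell)$ of~\eqref{eq:jn_b_galerkin}, test with $\VV_\ell = (0,\xi) \in \HH_\ell$ (which is admissible by Assumption~\ref{ass1}), and read off from the definition of $\b{jn}$ that $\dual{\xi}{(\tfrac12-\dlp)U_\ell + \slp\Phi_\ell}_\Gamma = \linfun_2(\xi)$. Hence the scalar factor $\dual{\xi}{(\tfrac12-\dlp)U_\ell + \slp\Phi_\ell - (\tfrac12-\dlp)u_0}_\Gamma$ vanishes, so multiplying it by $\dual{\xi}{(\tfrac12-\dlp)V_\ell + \slp\Psi_\ell}_\Gamma$ gives zero for every $\VV_\ell \in \HH_\ell$; rearranging this shows the extra term added to the left-hand side of~\eqref{eq:jn_bb} exactly matches the extra term added to the right-hand side, so $\UU_\ell$ also solves~\eqref{eq:jn_bb}.

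For the converse, suppose $\UU_\ell$ solves~\eqref{eq:jn_bb}. Testing again with $\VV_\ell = (0,\xi)$ and using $\bb{jn}((U_\ell,\Phi_\ell),(0,\xi)) = \dual{\xi}{(\tfrac12-\dlp)U_\ell + \slp\Phi_\ell}_\Gamma\,(1 + \dual{\xi}{\slp\xi}_\Gamma)$ on the left and $\linfun_2(\xi) + \dual{\xi}{\linfun_2}_\Gamma\dual{\xi}{\slp\xi}_\Gamma$ on the right, one obtains $\dual{\xi}{(\tfrac12-\dlp)U_\ell + \slp\Phi_\ell - (\tfrac12-\dlp)u_0}_\Gamma\,(1 + \dual{\xi}{\slp\xi}_\Gamma) = 0$. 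Since $\slp$ is $H^{-1/2}(\Gamma)$-elliptic, $\dual{\xi}{\slp\xi}_\Gamma \geq 0$, so the factor $1 + \dual{\xi}{\slp\xi}_\Gamma$ is strictly positive and the scalar $\dual{\xi}{(\tfrac12-\dlp)U_\ell + \slp\Phi_\ell - (\tfrac12-\dlp)u_0}_\Gamma$ must vanish. Feeding this back into~\eqref{eq:jn_bb} collapses the extra terms on both sides and recovers~\eqref{eq:jn_b_galerkin} for all $\VV_\ell \in \HH_\ell$.

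There is no real obstacle here beyond bookkeeping; the one point that needs a moment of care is that, unlike the Bielak-MacCamy case where the test-side functional was $\dual{\xi}{\slp\Psi_\ell - V_\ell}_\Gamma$, here it is $\dual{\xi}{(\tfrac12-\dlp)V_\ell + \slp\Psi_\ell}_\Gamma$, so when choosing $\VV_\ell = (0,\xi)$ one must remember that the $V_\ell$-component is zero and the surviving contribution is only $\dual{\xi}{\slp\xi}_\Gamma$ — exactly as in the Bielak-MacCamy proof. Since the statement ends with \qed and the paper explicitly says the argument is "similarly to Theorem~\ref{thm:equiv_b}", the expected write-up is essentially a pointer to that proof with the obvious substitutions; I would present the two steps above in the same compressed two-step format.
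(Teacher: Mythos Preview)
Your proposal is correct and follows exactly the approach the paper intends: the paper does not write out a separate proof for this theorem but simply notes that it is proved ``similarly to Theorem~\ref{thm:equiv_b}'' and ends with \qed, and your two-step argument is precisely that proof with the functional $\vv\mapsto\dual{\xi}{\slp\psi-v}_\Gamma$ replaced by $\vv\mapsto\dual{\xi}{(\tfrac12-\dlp)v+\slp\psi}_\Gamma$. Your observation that testing with $(0,\xi)$ kills the $(\tfrac12-\dlp)V_\ell$ contribution so that only $\dual{\xi}{\slp\xi}_\Gamma$ survives is exactly the bookkeeping detail that makes the argument go through verbatim.
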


\subsection{Existence and uniqueness of solutions}\label{sec:jn_solv2}

We stress that there is a close link between the Johnson-N\'ed\'elec and
the Bielak-MacCamy coupling, since
\begin{align*}
  \b{jn}(\uu,\uu) = \b{bmc}(\uu,\uu) \quad\text{for all } \uu\in\HH.
\end{align*}
This indicates that the analytical techniques to prove ellipticity of the two coupling methods are similar.

The stabilized bilinear form $\bb{jn}(\cdot,\cdot)$ of~\eqref{eq:jn_bb:def} induces a nonlinear operator $\BB{jn} : \HH
\rightarrow \HH^*$ by
\begin{align}
\begin{split}
  \dual{\BB{jn}\uu}{\vv} &:= \bb{jn}(\uu,\vv)
\end{split}
\end{align}
for all $\uu,\vv\in\HH$. The following theorem states strong monotonicity of $\BB{jn}$ under the same assumptions as for Theorem~\ref{thm:bmc_ell}. Instead of Lemma~\ref{lemma:equiv_norm}, we need the following result: Under Assumption~\ref{ass1}, the definition
\begin{align}\label{eq:equiv_norm_jn}
\begin{split}
    \enorm{\uu}^2 &:= \norm{\nabla u}{L^2(\Omega)}^2 + \dual\phi{\slp\phi}_\Gamma \\
    &\qquad+
    \lvert \dual\xi{(\tfrac12-\dlp)u + \slp\phi}_\Gamma\rvert^2
\end{split}
\end{align}
for $\uu=(u,\phi)\in\HH$ provides an equivalent norm on $\HH$. The proof is achieved by a Rellich compactness argument as in the proof of Lemma~\ref{lemma:equiv_norm}.

\begin{thm}\label{thm:bmc_ell_jn}
Under Assumption~\ref{ass1} and provided that the ellipticity constant
$\c{ellA}$ of $\A$ fulfills $\c{ellA}>1/4$, the operator $\BB{jn}$ is
strongly monotone and Lipschitz continuous.\qed
\end{thm}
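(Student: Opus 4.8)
The plan is to mimic the proof of Theorem~\ref{thm:bmc_ell} essentially verbatim, exploiting the identity $\b{jn}(\uu,\uu) = \b{bmc}(\uu,\uu)$ for all $\uu\in\HH$ together with the fact that the stabilizations in \eqref{def:bb} and \eqref{eq:jn_bb:def} agree on the diagonal. Indeed, testing the respective stabilized forms on $\ww = \uu - \vv$ against itself, one has $\dual{\BB{jn}\uu-\BB{jn}\vv}{\ww} = \dual{\B{jn}\ww}{\ww} + \lvert\dual{\xi}{(\tfrac12-\dlp)w+\slp\chi}_\Gamma\rvert^2$, and the linearity of $\b{jn}$ in the second argument makes $\dual{\B{jn}(\uu-\vv)}{\uu-\vv}$ legitimate here since the only nonlinearity sits in the $\A\nabla(\cdot)$ term. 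First I would therefore record the analogue of \eqref{eq:bmc_ell_1}, namely
\begin{align*}
 \dual{\BB{jn}\uu-\BB{jn}\vv}{\ww}
 &= \dual{\A\nabla u - \A\nabla v}{\nabla w}_\Omega
 - \dual{\chi}{w}_\Gamma \\
 &\quad+ \dual{(\tfrac12-\dlp)w}{\chi}_\Gamma
 + \dual{\chi}{\slp\chi}_\Gamma \\
 &\quad+ \lvert\dual{\xi}{(\tfrac12-\dlp)w+\slp\chi}_\Gamma\rvert^2,
\end{align*}
where $\ww = (w,\chi) = \uu-\vv$, and then observe that the middle three terms coincide with $I_2+I_3+I_4$ from the Bielak--MacCamy proof after using the duality $\dual{\psi}{(\tfrac12-\dlp)u}_\Gamma = \dual{(\tfrac12-\dlp^\dagger)\psi}{u}_\Gamma$.

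Next I would carry over Steps~1--4 of the proof of Theorem~\ref{thm:bmc_ell} unchanged: Step~1 bounds $\dual{\A\nabla u-\A\nabla v}{\nabla w}_\Omega \ge \c{ellA}\norm{\nabla w}{L^2(\Omega)}^2$ by \eqref{eq:ell_A}; Step~2 introduces the splitting $\chi = \chi_* + \chieq$ from \eqref{eq:phi_split} and sets $u_* = \widetilde\slp\chi_*$, so that the non-FEM terms collapse to $-\dual{\nabla u_*}{\nabla w}_\Omega \ge -\norm{\nabla u_*}{L^2(\R^d)}\norm{\nabla w}{L^2(\Omega)}$; Step~3 applies Young's inequality with a parameter $\delta$; Step~4 uses \eqref{eq:splitting:bmc3} to absorb $\tfrac{\delta^{-1}}2\norm{\nabla u_*}{L^2(\R^d)}^2$ into $\dual{\chi}{\slp\chi}_\Gamma$, yielding
\begin{align*}
 \dual{\BB{jn}\uu-\BB{jn}\vv}{\ww}
 &\ge (\c{ellA}-\tfrac\delta2)\norm{\nabla w}{L^2(\Omega)}^2
 + (1-\tfrac{\delta^{-1}}2)\dual{\chi}{\slp\chi}_\Gamma \\
 &\quad+ \lvert\dual{\xi}{(\tfrac12-\dlp)w+\slp\chi}_\Gamma\rvert^2.
\end{align*}
Since $\c{ellA}>1/4$, choosing $\delta$ with $1/4 < \delta/2 < \c{ellA}$ makes both leading coefficients positive, and the equivalent-norm statement \eqref{eq:equiv_norm_jn} (which replaces Lemma~\ref{lemma:equiv_norm}) turns the right-hand side into $\c{ell}\,\enorm{\ww}^2 \simeq \nnorm{\ww}^2$, giving strong monotonicity. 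Lipschitz continuity of $\BB{jn}$ follows, exactly as in Theorem~\ref{thm:bmc_ell}, from \eqref{eq:lip_A} and the boundedness of $\slp$, $\dlp$ on the relevant trace spaces, plus the boundedness of the rank-one correction term.

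The only genuinely new ingredient compared with the Bielak--MacCamy case is the equivalent-norm claim \eqref{eq:equiv_norm_jn}, and I expect that to be the main (if modest) obstacle. As the excerpt indicates, it is proved by a Rellich compactness argument: the estimate $\enorm{\uu}\lesssim\nnorm{\uu}$ is immediate from continuity of $\slp$ and $\dlp$, and for the reverse one argues by contradiction with a normalized sequence $\vv_n = (v_n,\psi_n)$, $\nnorm{\vv_n}=1$, $\enorm{\vv_n}\to0$. Ellipticity of $\slp$ forces $\psi_n\to0$ in $H^{-1/2}(\Gamma)$ and $\nabla v_n\to0$ in $L^2(\Omega)$; after passing to a subsequence $v_n\rightharpoonup v$ in $H^1(\Omega)$, hence $v_n\to v$ in $L^2(\Omega)$ and $v$ is constant; weak lower semicontinuity of $\enorm\cdot$ gives $\enorm{(v,0)}=0$, so the surviving term $\lvert\dual{\xi}{(\tfrac12-\dlp)v}_\Gamma\rvert$ vanishes. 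The key algebraic point is that for the constant function $v\equiv c$ one has $(\tfrac12-\dlp)v = \tfrac12 c - \dlp c = c$ (using $\dlp 1 = -\tfrac12$, equivalently $(\tfrac12+\dlp)1 = 0$ as already recorded before the proof of Theorem~\ref{thm:bmc_ell}), so $0 = \lvert\dual{\xi}{c}_\Gamma\rvert = \lvert c\rvert\,\lvert\dual{\xi}1_\Gamma\rvert$, and Assumption~\ref{ass1} ($\dual{\xi}1_\Gamma\neq0$) forces $c=0$. Thus $v_n\to0$ in $H^1(\Omega)$, contradicting $\nnorm{\vv_n}=1$. With this lemma in hand the proof of Theorem~\ref{thm:bmc_ell_jn} is complete; everything else is a transcription of the Bielak--MacCamy argument, and one should of course remark (as for Theorem~\ref{thm:bmc_ell}) that in $d=3$ the splitting \eqref{eq:phi_split} is unnecessary since $\norm{\nabla u_*}{L^2(\R^d)}^2 = \dual{\chi}{\slp\chi}_\Gamma$ directly.
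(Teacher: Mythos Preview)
Your proposal is correct and matches the paper's intended approach exactly: the paper omits the proof entirely (ending the statement with \qed) precisely because, as it notes, $\b{jn}(\uu,\uu)=\b{bmc}(\uu,\uu)$ makes the diagonal terms identical to those in Theorem~\ref{thm:bmc_ell}, so Steps~1--4 carry over verbatim and the only change is swapping Lemma~\ref{lemma:equiv_norm} for the equivalent-norm claim~\eqref{eq:equiv_norm_jn}, whose Rellich-compactness proof you sketch correctly (including the key identity $(\tfrac12-\dlp)1=1$). One tiny notational slip: writing $\dual{\B{jn}\ww}{\ww}$ for the non-stabilized part is not literally right since $\A$ is nonlinear, but your subsequent display has the correct expression $\dual{\A\nabla u-\A\nabla v}{\nabla w}_\Omega$, so the argument itself is sound.
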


As above, Theorem~\ref{thm:equiv_b_jn} and standard theory on strong\-ly monotone operators now prove the following corollary.

\begin{cor}
Under the assumptions of Theorem~\ref{thm:bmc_ell_jn}, the Johnson-N\'ed\'elec coupling~\eqref{eq:jn_b} and its Galerkin discretization~\eqref{eq:jn_b_galerkin}
admit unique solutions $\uu\in\HH$ resp.\ $\UU_\ell\in\HH_\ell$. Moreover, there
holds quasi-optimality~\eqref{eq:cea}. \qed
\end{cor}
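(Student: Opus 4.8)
The plan is to mirror the Bielak--MacCamy arguments exactly, since the final corollary is an immediate consequence of Theorem~\ref{thm:equiv_b_jn}, Theorem~\ref{thm:bmc_ell_jn}, and the standard theory on strongly monotone operators recalled in Section~\ref{sec:preliminaries:monotone}. First I would introduce the operator $\BB{jn}:\HH\to\HH^*$ induced by $\bb{jn}(\cdot,\cdot)$ via $\dual{\BB{jn}\uu}{\vv}=\bb{jn}(\uu,\vv)$, and observe that Theorem~\ref{thm:bmc_ell_jn} guarantees it is Lipschitz continuous and strongly monotone (under Assumption~\ref{ass1} and $\c{ellA}>1/4$). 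By the cited results from~\cite[Section~25.4]{zeidler}, the operator equation $\dual{\BB{jn}\uu}{\vv}=\widetilde\L(\vv)$ for all $\vv\in\HH$ admits a unique solution $\uu\in\HH$, and for the closed subspace $\HH_\ell=\XX_\ell\times\YY_\ell$ the Galerkin equation $\dual{\BB{jn}\UU_\ell}{\VV_\ell}=\widetilde\L(\VV_\ell)$ for all $\VV_\ell\in\HH_\ell$ admits a unique solution $\UU_\ell\in\HH_\ell$, together with the C\'ea-type quasi-optimality~\eqref{eq:cea} with $\c{cea}=\c{lip}/\c{ell}$.

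Next I would identify the correct right-hand side: take $\widetilde\L$ to be the functional appearing on the right of~\eqref{eq:jn_bb}, namely $\widetilde\L(\vv)=\linfun_1(v)+\linfun_2(\psi)+\dual\xi{\linfun_2}_\Gamma\dual\xi{(\tfrac12-\dlp)v+\slp\psi}_\Gamma$. Then the unique solvability statement above applies verbatim to~\eqref{eq:jn_bb} and its Galerkin discretization. Finally, Theorem~\ref{thm:equiv_b_jn} asserts that $\UU_\ell\in\HH_\ell$ solves~\eqref{eq:jn_b_galerkin} if and only if it solves~\eqref{eq:jn_bb}; applying this with $\HH_\ell=\HH$ gives the equivalence between~\eqref{eq:jn_b} and~\eqref{eq:jn_bb} at the continuous level as well. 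Hence $\uu$ is the unique solution of~\eqref{eq:jn_b} and $\UU_\ell$ the unique solution of~\eqref{eq:jn_b_galerkin}.

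Quasi-optimality~\eqref{eq:cea} then transfers directly: since $\uu$ and $\UU_\ell$ solve the same (stabilized) operator equation with the same $\widetilde\L$ on $\HH$ and $\HH_\ell$ respectively, the C\'ea estimate for $\BB{jn}$ is literally the C\'ea estimate for the Johnson--N\'ed\'elec Galerkin scheme, with $\c{cea}=\c{lip}/\c{ell}$ depending only on the geometry and on $\A$. I expect no genuine obstacle here, as all the work has been done upstream; the only point requiring a word of care is to check that the equivalence of Theorem~\ref{thm:equiv_b_jn} is indeed applicable with the choice $\HH_\ell=\HH$ — this is legitimate because $\HH$ itself is a closed subspace of $\HH$ and $\xi\in\bigcap_\ell\YY_\ell\subseteq H^{-1/2}(\Gamma)$, so all test functions used in the proof of Theorem~\ref{thm:equiv_b_jn}, in particular $(0,\xi)$, lie in $\HH$. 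The remaining details are routine and left to the reader, exactly as in Corollary~\ref{cor:cea}.
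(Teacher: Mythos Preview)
Your proposal is correct and follows essentially the same approach as the paper: the paper explicitly states that ``Theorem~\ref{thm:equiv_b_jn} and standard theory on strongly monotone operators now prove the following corollary'' and leaves the proof as a \qed, referring back to the identical argument given for Corollary~\ref{cor:cea} in the Bielak--MacCamy case. Your write-up is in fact slightly more detailed than the paper's (e.g.\ the remark that $\HH_\ell=\HH$ is a legitimate choice in Theorem~\ref{thm:equiv_b_jn}), but the logical structure is the same.
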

\subsection{Residual-based error estimator}\label{sec:jn_errest}
We recall a residual-based error estimator for the Johnson-N\'ed\'elec coupling.
The proof of the following result can be achieved with similar techniques as in
the proof of Theorem~\ref{thm:bmc_est}. The linear case for $d=2$ is found in~\cite{aposterjn}.
\begin{thm}\label{thm:jn_est}
  Suppose that $\uu \in\HH$ is the unique solution of the Johnson-N\'ed\'elec
  coupling~\eqref{eq:jn} and
  $\UU_\ell \in \HH_\ell = \SS^1(\TT_\ell) \times
  \PP^0(\EE_\ell^\Gamma)$
  is its Galerkin approximation~\eqref{eq:jn_b_galerkin}.
  Then, there holds reliability in the sense of
  \begin{align*}
    \c{rel}^{-2} \nnorm{\uu-\UU_\ell}^2 \leq \eta_\ell^2:=
    \sum_{T\in\TT_\ell} \eta_\ell(T)^2 +
    \sum_{E\in\EE_\ell^\Omega \cup \EE_\ell^\Gamma} \eta_\ell(E)^2.
  \end{align*}
  The volume contributions for $\TT_\ell$ and $\EE_\ell^\Omega$ are the same as
  above, cf.~\eqref{eq:bmc_errest_vol1} and~\eqref{eq:bmc_errest_vol2}, but are
  denoted by $\eta_\ell(T)$ resp. $\eta_\ell(E)$.
  The boundary contributions read
  \begin{align*}
    \eta_\ell(E)^2 &= h_E\norm{\phi_0 + \Phi_\ell -
      \A\nabla U_\ell\cdot\NV}{L^2(E)}^2 \\
      &\hspace{1cm}+
      h_E\norm{\nabla_\Gamma( (\tfrac12-\dlp)(u_0-U_\ell)
      -\slp\Phi_\ell)}{L^2(E)}^2
  \end{align*}
  for $E\in\EE_\ell^\Gamma$.
  The constant $\c{rel} >0$ depends only on $\Omega$, $\Gamma$
  and the $\gamma$-shape regularity of $\TT_\ell$ and $\EE_\ell^\Gamma$.\qed
\end{thm}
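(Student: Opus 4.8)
The plan is to mirror the proof of Theorem~\ref{thm:bmc_est} step by step, since the Johnson-N\'ed\'elec coupling shares the same abstract structure. First I would rewrite problem~\eqref{eq:jn} and its Galerkin discretization~\eqref{eq:jn_b_galerkin} in operator form $\dual{\B{jn}\uu}{\vv}=\L(\vv)$ and $\dual{\B{jn}\UU_\ell}{\VV_\ell}=\L(\VV_\ell)$. The crucial observation is that the stabilization term added in~\eqref{eq:jn_bb:def} vanishes on the exact error: using the second equation of~\eqref{eq:jn} with test function $\xi$ (which lies in every $\YY_\ell$ by Assumption~\ref{ass1}), one has $\dual{\xi}{(\tfrac12-\dlp)(u-U_\ell)+\slp(\phi-\Phi_\ell)}_\Gamma = \dual{\xi}{(\tfrac12-\dlp)u_0}_\Gamma - \dual{\xi}{(\tfrac12-\dlp)U_\ell + \slp\Phi_\ell}_\Gamma$; this is zero because the discrete second equation is tested by $\xi$. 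Hence $\dual{\BB{jn}\uu-\BB{jn}\UU_\ell}{\uu-\UU_\ell} = \dual{\B{jn}\uu-\B{jn}\UU_\ell}{\uu-\UU_\ell}$, and strong monotonicity of $\BB{jn}$ from Theorem~\ref{thm:bmc_ell_jn} yields $\nnorm{\uu-\UU_\ell}^2 \lesssim \dual{\B{jn}\uu-\B{jn}\UU_\ell}{\uu-\UU_\ell} \le \norm{\B{jn}\uu-\B{jn}\UU_\ell}{\HH^*}\,\nnorm{\uu-\UU_\ell}$, so $\nnorm{\uu-\UU_\ell}\lesssim \norm{\B{jn}\uu-\B{jn}\UU_\ell}{\HH^*}$.

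Next I would exploit Galerkin orthogonality: for arbitrary $\VV_\ell\in\HH_\ell$,
\begin{align*}
  \norm{\B{jn}\uu-\B{jn}\UU_\ell}{\HH^*}
  = \sup_{\vv\in\HH\setminus\{0\}} \frac{\abs{\L(\vv-\VV_\ell)-\dual{\B{jn}\UU_\ell}{\vv-\VV_\ell}}}{\nnorm{\vv}}.
\end{align*}
Choosing $\VV_\ell=(\J_\ell v,0)$ with a Cl\'ement/Scott-Zhang quasi-interpolation operator, the residual splits into a volume part, identical to the Bielak-MacCamy case, and a boundary part. Writing out $\L(\vv-\VV_\ell)-\dual{\B{jn}\UU_\ell}{\vv-\VV_\ell}$ using the definition of $\b{jn}$ and $\L$, the terms tested against $v-\J_\ell v$ collect the interior residual $\dual{f}{v-\J_\ell v}_\Omega - \dual{\A\nabla U_\ell}{\nabla(v-\J_\ell v)}_\Omega + \dual{\phi_0+\Phi_\ell}{v-\J_\ell v}_\Gamma$, while the terms tested against $\psi$ collect $\dual{\psi}{(\tfrac12-\dlp)(u_0-U_\ell)-\slp\Phi_\ell}_\Gamma$. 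Piecewise integration by parts on the interior term produces, exactly as before, the element residual $h_T\norm{f}{L^2(T)}$ (note $\div\A\nabla U_\ell=0$ elementwise since $\A\nabla U_\ell$ is piecewise constant) and the face jump $h_E^{1/2}\norm{[\A\nabla U_\ell\cdot\NV]}{L^2(E)}$, plus a boundary remainder $\dual{\phi_0+\Phi_\ell-\A\nabla U_\ell\cdot\NV}{v-\J_\ell v}_\Gamma$ which is bounded via the trace estimate~\eqref{eq:intop_trace} by $h_E^{1/2}\norm{\phi_0+\Phi_\ell-\A\nabla U_\ell\cdot\NV}{L^2(E)}\norm{v}{H^1(\Omega)}$.

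For the $\psi$-term I would use the fractional-Sobolev localization: the second equation of~\eqref{eq:jn} tested with the characteristic function of each $E\in\EE_\ell^\Gamma$ gives $\dual{1}{(\tfrac12-\dlp)(u_0-U_\ell)-\slp\Phi_\ell}_E=0$, so \cite[Corollary~4.2]{cms} applies and bounds $\norm{(\tfrac12-\dlp)(u_0-U_\ell)-\slp\Phi_\ell}{H^{1/2}(\Gamma)}^2$ by $\sum_{E} h_E\norm{\nabla_\Gamma((\tfrac12-\dlp)(u_0-U_\ell)-\slp\Phi_\ell)}{L^2(E)}^2$; duality then finishes the estimate. Collecting all contributions and dividing by $\nnorm{\vv}$ gives the claimed bound. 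The main obstacle is bookkeeping rather than a genuinely new idea: one must check that the stabilization term indeed cancels on the error (the argument above via Assumption~\ref{ass1}) and that the Sobolev-regularity hypotheses $\phi_0\in L^2(\Gamma)$, $u_0\in H^1(\Gamma)$ make every term well-defined, in particular that $(\tfrac12-\dlp)(u_0-U_\ell)-\slp\Phi_\ell$ has a well-defined surface gradient in $L^2(E)$; this follows from mapping properties of $\dlp$ and $\slp$ on piecewise polynomials together with the regularity of $u_0$.
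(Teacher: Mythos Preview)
Your proposal is correct and follows essentially the same approach as the paper, which explicitly states that the proof ``can be achieved with similar techniques as in the proof of Theorem~\ref{thm:bmc_est}.'' Your identification of the residual terms, the cancellation of the stabilization on the error via the second coupling equation tested with $\xi$, the elementwise integration by parts, and the localization of the $\psi$-term through the mean-zero property and \cite[Corollary~4.2]{cms} all match the Bielak--MacCamy template exactly.
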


\section{Costabel's symmetric coupling}
\label{sec:sym}
In this section, we treat the symmetric FEM-BEM coupling method, which first
appeared in~\cite{costabel}. First, we
consider the continuous and discrete formulation.
Afterwards, we investigate well-posed\-ness of the coupling equations, where we
give a simpler proof of unique solvability than in the pioneering work~\cite{cs1995}. Finally, we recall a residual-based error estimator from the latter work.

\subsection{Derivation of symmetric coupling}\label{sec:sym_form}
We start from the Johnson-N\'ed\'elec coupling~\eqref{eq:jn} and modify the first equation: With the ansatz~\eqref{eq:jn_rep_form} for $u^{\rm ext}$,
we use the second Calder\'on identity
\begin{align}
  \phi = \partial_\NV^{\rm ext} u^{\rm ext} = -(\dlp^\dagger-\tfrac12)\phi -
  \hyp u^{\rm ext}.
\end{align}
The trace jump condition~\eqref{eq:strongform:trace} eliminates $u^{\rm ext}$ and gives
\begin{align}
  \phi = -(\dlp^\dagger-\tfrac12)\phi -
  \hyp u + \hyp u_0.
\end{align}
We plug this identity into the first equation of~\eqref{eq:jn} and move $\hyp u_0$ to the right-hand side. Altogether, the variational formulation of the symmetric coupling reads as follows:
Find $\uu=(u,\phi) \in \HH$ such that
  \begin{align}
    \begin{split}\label{eq:sym}
      &\dual{\A\nabla u}{\nabla v}_\Omega + \dual{(\dlp^\dagger-\tfrac12)\phi}{v}_\Gamma
      + \dual{\hyp u}{v}_\Gamma \\
      &\hspace{3cm}= \dual{f}{v}_\Omega + \dual{\phi_0 + \hyp u_0}{v}_\Gamma, \\
      &\dual{\psi}{(\tfrac12-\dlp)u}_\Gamma + \dual{\psi}{\slp\phi}_\Gamma = \dual{\psi}{(\tfrac12-\dlp)u_0}_\Gamma
    \end{split}
  \end{align}
hold for all $\vv=(v,\psi) \in \HH$. Note that the second equation of~\eqref{eq:sym} is just the same as for the Johnson-N\'ed\'elec coupling~\eqref{eq:jn}.

\subsection{Stabilization}\label{sec:sym_solv}
Unique solvability for the symmetric
coupling~\eqref{eq:sym} as well as for its
discretization
can be found in~\cite{cs1995} for our nonlinear model
problem~\eqref{eq:strongform}. Nevertheless, here we present a much simplified proof of this result.
We proceed as before and define the linear form $\b{s}(\cdot,\cdot)$ for all $\uu,\vv \in\HH$ by
\begin{align}
\begin{split}
  \b{s}(\uu,\vv) &:= \dual{\A\nabla u}{\nabla v}_\Omega +
  \dual{(\dlp^\dagger-\tfrac12)\phi}{v}_\Gamma \\
  &\qquad+ \dual{\hyp u }{v}_\Gamma + \dual{\psi}{(\tfrac12-\dlp)u + \slp\phi}_\Gamma.
\end{split}
\end{align}
Moreover, we define linear functionals $\linfun_1$ and $\linfun_2$ by
\begin{align*}
  \linfun_1(v) &:= \dual{f}{v}_\Omega + \dual{\phi_0 + \hyp u_0}{v}_\Gamma, \\
  \linfun_2(\psi) &:= \dual{\psi}{(\tfrac12-\dlp)u_0}_\Gamma.
\end{align*}
Then, problem~\eqref{eq:sym} can equivalently be stated as: Find $\uu\in\HH$ such
that
\begin{align}\label{eq:sym_b}
  \b{s}(\uu,\vv) = \L(\vv) := \linfun_1(v) + \linfun_2(\psi)
\end{align}
holds for all $\vv=(v,\psi)\in\HH$. For the discretization, let $\HH_\ell = \XX_\ell\times\YY_\ell$ be a closed subspace of $\HH$. The Galerkin discretization of~\eqref{eq:sym_b} then reads: Find
$\UU_\ell\in\HH_\ell$ such that
\begin{align}\label{eq:sym_b_galerkin}
  \b{s}(\UU_\ell,\VV_\ell) = \L(\VV_\ell)
\end{align}
holds for all $\VV_\ell \in\HH_\ell$.

The following theorem states equivalence of the linear form $\b{s}(\cdot,\cdot)$ to some new stabilized form $\bb{s}(\cdot,\cdot)$ which will turn out to be elliptic.
The proof is achieved by
similar techniques as used for proving Theorem~\ref{thm:equiv_b} and is thus omitted.

\begin{thm}\label{thm:equiv_b_sym}
With $\xi$ of Assumption~\ref{ass1}, the linear form
\begin{align}\label{eq:sym_bb:def}
  \bb{s}(\uu,\vv) &:= \b{s}(\uu,\vv) \\
  &\quad+ \dual{\xi}{\slp\phi + (\tfrac12 - \dlp)u}_\Gamma \dual{\xi}{\slp\psi
  + (\tfrac12- \dlp)v}_\Gamma\nonumber
\end{align}
is equivalent to the linear form $\b{s}(\cdot,\cdot)$ in the following sense:
The pair $\UU_\ell =
  (U_\ell,\Phi_\ell) \in\HH_\ell$ solves problem~\eqref{eq:sym_b_galerkin} if and only if it solves
  \begin{align}\label{eq:sym_bb}
  \begin{split}
    \bb{s}(\UU_\ell,\VV_\ell) &= \L(\VV_\ell) \\
    &+ \dual\xi{\linfun_2}_\Gamma\dual\xi{(\tfrac12-\dlp)V_\ell + \slp\Psi_\ell}_\Gamma
  \end{split}
  \end{align}
  for all $\VV_\ell = (V_\ell,\Psi_\ell) \in\HH_\ell$.\qed
\end{thm}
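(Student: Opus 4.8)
The plan is to mimic the two-step argument of Theorem~\ref{thm:equiv_b} verbatim, merely replacing the quantity $\dual{\xi}{\slp\Phi_\ell - U_\ell}_\Gamma$ by $\dual{\xi}{(\tfrac12-\dlp)U_\ell + \slp\Phi_\ell}_\Gamma$, since the second equation of the symmetric coupling~\eqref{eq:sym} is literally the second equation of the Johnson-N\'ed\'elec coupling~\eqref{eq:jn}. The key observation driving everything is that the stabilization term in~\eqref{eq:sym_bb:def} factors as a product whose first factor, evaluated at a solution, is forced to vanish by the second coupling equation tested against $\xi$.

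\textbf{Step~1.} I would first assume $\UU_\ell=(U_\ell,\Phi_\ell)\in\HH_\ell$ solves~\eqref{eq:sym_b_galerkin} and test with $\VV_\ell=(0,\xi)\in\HH_\ell$, which is admissible by Assumption~\ref{ass1}. Since $\b{s}((U_\ell,\Phi_\ell),(0,\xi)) = \dual{\xi}{(\tfrac12-\dlp)U_\ell + \slp\Phi_\ell}_\Gamma$ and $\L((0,\xi)) = \linfun_2(\xi) = \dual{\xi}{\linfun_2}_\Gamma$, this yields
\begin{align*}
 \dual{\xi}{(\tfrac12-\dlp)U_\ell + \slp\Phi_\ell - \linfun_2}_\Gamma = 0.
\end{align*}
Multiplying by the arbitrary scalar $\dual{\xi}{(\tfrac12-\dlp)V_\ell + \slp\Psi_\ell}_\Gamma$ shows that the extra term in $\bb{s}(\UU_\ell,\VV_\ell)$ equals $\dual{\xi}{\linfun_2}_\Gamma\dual{\xi}{(\tfrac12-\dlp)V_\ell + \slp\Psi_\ell}_\Gamma$ for every $\VV_\ell\in\HH_\ell$, so adding it to $\b{s}(\UU_\ell,\VV_\ell)=\L(\VV_\ell)$ gives exactly~\eqref{eq:sym_bb}.

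\textbf{Step~2.} For the converse, I would let $\UU_\ell$ solve~\eqref{eq:sym_bb} and again insert $\VV_\ell=(0,\xi)$. Writing $s:=\dual{\xi}{(\tfrac12-\dlp)U_\ell + \slp\Phi_\ell}_\Gamma$ and $t:=\dual{\xi}{\slp\xi}_\Gamma$, the right-hand side is $\linfun_2(\xi) + \dual{\xi}{\linfun_2}_\Gamma\, t$ and the left-hand side is $s + s\,t$, so $(s - \dual{\xi}{\linfun_2}_\Gamma)(1+t)=0$. Because $\slp$ is $H^{-1/2}(\Gamma)$-elliptic, $t = \dual{\xi}{\slp\xi}_\Gamma > 0$ (as $\dual{\xi}{1}_\Gamma\neq0$ forces $\xi\neq0$), hence $1+t\neq0$ and $s = \dual{\xi}{\linfun_2}_\Gamma$. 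This is precisely the identity needed to cancel the stabilization term, so $\bb{s}(\UU_\ell,\VV_\ell) - \b{s}(\UU_\ell,\VV_\ell) = \dual{\xi}{\linfun_2}_\Gamma\dual{\xi}{(\tfrac12-\dlp)V_\ell + \slp\Psi_\ell}_\Gamma$, and subtracting this from~\eqref{eq:sym_bb} recovers $\b{s}(\UU_\ell,\VV_\ell)=\L(\VV_\ell)$, i.e.~\eqref{eq:sym_b_galerkin}.

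Since the paper announces that this proof is omitted, a one-line pointer that it follows the pattern of Theorem~\ref{thm:equiv_b} with the roles of the coupling identities adjusted as above is all that is required. There is no real obstacle here: the only point deserving a moment's care is the non-degeneracy $1+\dual{\xi}{\slp\xi}_\Gamma\neq0$ in Step~2, which is immediate from ellipticity of the simple-layer potential, exactly as in Theorem~\ref{thm:equiv_b}.
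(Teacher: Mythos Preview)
Your proposal is correct and follows exactly the approach the paper intends: the paper omits the proof with a pointer to Theorem~\ref{thm:equiv_b}, and you have faithfully reproduced that two-step argument with the quantity $\dual{\xi}{\slp\Phi_\ell - U_\ell}_\Gamma$ replaced by $\dual{\xi}{(\tfrac12-\dlp)U_\ell + \slp\Phi_\ell}_\Gamma$, which is precisely what the second equation of~\eqref{eq:sym} (identical to that of~\eqref{eq:jn}) dictates. The non-degeneracy $1+\dual{\xi}{\slp\xi}_\Gamma>0$ from ellipticity of $\slp$ is handled just as in the original proof.
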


\subsection{Existence and uniqueness of solutions}\label{sec:sym_solv2}
The stabilized bilinear form $\bb{s}(\cdot,\cdot)$ from~\eqref{eq:sym_bb:def} induces a nonlinear operator $\BB{s} : \HH
\rightarrow \HH^*$ by
\begin{align}
\begin{split}
  \dual{\BB{s}\uu}{\vv} &:= \bb{s}(\uu,\vv)
\end{split}
\end{align}
for all $\uu,\vv\in\HH$. The following theorem states strong monotonicity of $\BB{s}$ \emph{without} any further restriction on the ellipticity constant $\c{ellA}>0$ of the nonlinearity $\A$.

\begin{thm}
Under Assumption~\ref{ass1}, the operator $\BB{s}$ is strongly monotone and Lipschitz continuous.
\end{thm}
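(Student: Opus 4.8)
The plan is to follow the same strategy as in the proof of Theorem~\ref{thm:bmc_ell}, but to exploit the additional coercivity coming from the hypersingular operator $\hyp$, which is exactly what allows us to drop the restriction $\c{ellA}>1/4$. As before, Lipschitz continuity of $\BB{s}$ is immediate from the Lipschitz continuity~\eqref{eq:lip_A} of $\A$ together with the boundedness of all involved boundary integral operators, so the whole work lies in establishing strong monotonicity. For $\uu=(u,\phi),\vv=(v,\psi)\in\HH$ write $\ww=(w,\chi):=\uu-\vv$ and compute
\begin{align*}
 \dual{\BB{s}\uu-\BB{s}\vv}{\ww}
 &= \dual{\A\nabla u-\A\nabla v}{\nabla w}_\Omega
 + \dual{(\dlp^\dagger-\tfrac12)\chi}{w}_\Gamma \\
 &\quad+ \dual{\hyp w}{w}_\Gamma
 + \dual{\chi}{(\tfrac12-\dlp)w}_\Gamma
 + \dual{\chi}{\slp\chi}_\Gamma \\
 &\quad+ \lvert\dual{\xi}{\slp\chi+(\tfrac12-\dlp)w}_\Gamma\rvert^2.
\end{align*}
The crucial algebraic observation is that the two cross terms cancel: $\dual{(\dlp^\dagger-\tfrac12)\chi}{w}_\Gamma + \dual{\chi}{(\tfrac12-\dlp)w}_\Gamma = \dual{\chi}{(\tfrac12-\dlp)w}_\Gamma - \dual{\chi}{(\tfrac12-\dlp)w}_\Gamma = 0$ by the adjointness $\dual{\dlp^\dagger\chi}{w}_\Gamma = \dual{\chi}{\dlp w}_\Gamma$. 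Hence the mixed structure disappears entirely and we are left with a genuinely coercive expression in $w$ and $\chi$ separately.

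After this cancellation, I would bound the remaining terms individually: the volume term gives $\dual{\A\nabla u-\A\nabla v}{\nabla w}_\Omega\ge\c{ellA}\norm{\nabla w}{L^2(\Omega)}^2$ by strong monotonicity~\eqref{eq:ell_A}; the hypersingular term satisfies $\dual{\hyp w}{w}_\Gamma\ge0$ (it is positive semidefinite, with kernel the constants, which is all we need here); and the simple-layer term gives $\dual{\chi}{\slp\chi}_\Gamma\gtrsim\norm{\chi}{H^{-1/2}(\Gamma)}^2$ by ellipticity of $\slp$. Therefore
\begin{align*}
 \dual{\BB{s}\uu-\BB{s}\vv}{\ww}
 \ge \c{ellA}\norm{\nabla w}{L^2(\Omega)}^2 + \dual{\chi}{\slp\chi}_\Gamma
 + \lvert\dual{\xi}{\slp\chi+(\tfrac12-\dlp)w}_\Gamma\rvert^2
 \gtrsim \enorm{\ww}^2,
\end{align*}
with $\enorm\cdot$ the norm of~\eqref{eq:equiv_norm_jn}. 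Unlike the Bielak--MacCamy and Johnson--N\'ed\'elec cases, there is no Young inequality needed, hence no subtraction of a $\delta$-multiple of $\norm{\nabla w}{L^2(\Omega)}^2$, and consequently no smallness restriction on $\c{ellA}$; any $\c{ellA}>0$ works. Finally, the stated equivalence (the analogue of Lemma~\ref{lemma:equiv_norm} for~\eqref{eq:equiv_norm_jn}, already asserted in the text via a Rellich argument) converts $\enorm{\ww}^2$ into $\nnorm{\ww}^2$, which yields~\eqref{eq:operator:monotone} and completes the proof.

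The main obstacle, such as it is, is bookkeeping rather than deep analysis: one must carefully verify the cancellation of the two cross terms using the precise sign conventions in the definition of $\b{s}(\cdot,\cdot)$ and the adjointness of $\dlp$ and $\dlp^\dagger$, and one must invoke (rather than re-prove) the semidefiniteness of $\hyp$ and the norm equivalence~\eqref{eq:equiv_norm_jn}. I would also note in passing that the $d=2$ subtlety present in Theorem~\ref{thm:bmc_ell} — the splitting $\chi=\chi_*+\chieq$ needed because $\norm{\nabla u_*}{L^2(\R^d)}^2=\dual{\chi}{\slp\chi}_\Gamma$ fails for non-equilibrated densities — does not arise here, precisely because we never need to identify $\dual{\chi}{\slp\chi}_\Gamma$ with an exterior Dirichlet energy; the simple-layer ellipticity estimate is used directly and unconditionally (recall the standing assumption $\diam(\Omega)<1$ in 2D).
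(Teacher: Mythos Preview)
Your proof is correct and follows essentially the same approach as the paper: compute $\dual{\BB{s}\uu-\BB{s}\vv}{\ww}$, use adjointness of $\dlp$ and $\dlp^\dagger$ to cancel the cross terms, drop the nonnegative $\dual{\hyp w}{w}_\Gamma$, apply strong monotonicity of $\A$, and conclude via the norm equivalence~\eqref{eq:equiv_norm_jn}. Your additional remarks on why no Young inequality (and hence no restriction on $\c{ellA}$) is needed, and why the 2D splitting~\eqref{eq:phi_split} is avoided, are accurate and add useful context.
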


\begin{proof}
Lipschitz continuity of $\BB{s}$ simply follows from the Lip\-schitz continuity of
$\A$ and the continuity of the boundary integral operators.

It thus only remains to show ellipticity of $\BB{s}$. We write $\ww = \uu-\vv = (w,\chi)$.
Recall that the hypersingular operator $\hyp$ is positive semi-definite. With the ellipticity constant $\c{ellA}>0$ of $\A$, we see
\begin{align}
\begin{split}
  \dual{&\BB{s}\uu - \BB{s}\vv}{\ww} \\
  &= \dual{\A\nabla u - \A\nabla v}{\nabla w}_\Omega +
  \dual{(\dlp^\dagger-\tfrac12)\chi}{w}_\Gamma \\
  &\quad+ \dual{\hyp w}{w}_\Gamma -\dual{\chi}{(\dlp-\tfrac12)w}_\Gamma +
  \dual{\chi}{\slp\chi}_\Gamma \\
  &\quad+ |\dual{\xi}{\slp\chi + (\tfrac12-\dlp)w}_\Gamma|^2 \\
  &\geq \c{ellA} \norm{\nabla w}{L^2(\Omega)}^2 +
  \dual{\chi}{\slp\chi}_\Gamma \\
  &\quad+ |\dual{\xi}{\slp\chi + (\tfrac12-\dlp)w}_\Gamma|^2 \\
  &\geq \min\{\c{ellA},1\} \big(\norm{\nabla w}{L^2(\Omega)}^2 +
  \dual{\chi}{\slp\chi}_\Gamma \\
  &\quad+ |\dual{\xi}{\slp\chi + (\tfrac12-\dlp)w}_\Gamma|^2 \big).
\end{split}
\end{align}
Norm equivalence for the norm $\enorm\cdot$ defined in~\eqref{eq:equiv_norm_jn} thus concludes
the proof of
ellipticity of the operator $\BB{s}$.\qed
\end{proof}

As above, Theorem~\ref{thm:equiv_b_sym} and standard theory on strong\-ly monotone operators prove the following corollary.

\begin{cor}
Under Assumption~\ref{ass1}, the symmetric coupling~\eqref{eq:sym_b} and its Galerkin discretization~\eqref{eq:sym_b_galerkin}
admit\linebreak unique solutions $\uu\in\HH$ and $\UU_\ell\in\HH_\ell$,
respectively. Moreover, there holds the
quasi-optimality~\eqref{eq:cea}. \qed
\end{cor}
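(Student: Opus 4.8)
The plan is to follow exactly the two-step template established by Theorem~\ref{thm:bmc_ell}: first dispatch Lipschitz continuity cheaply, then devote the real effort to the ellipticity (strong monotonicity) of $\BB{s}$. For the Lipschitz bound, I would observe that $\bb{s}(\uu,\vv)-\bb{s}(\ww,\vv)$ is, term by term, controlled by $\nnorm{\uu-\ww}\,\nnorm{\vv}$: the volume term uses~\eqref{eq:lip_A} for $\A$; the boundary terms involving $\dlp$, $\dlp^\dagger$, $\slp$, and $\hyp$ use the continuity of these operators as bounded linear maps between the relevant trace spaces; and the quadratic stabilization term is a product of two bounded linear functionals in $\uu$ resp.\ $\vv$, hence also Lipschitz after using $|ab-cb|\le|a-c||b|$. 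Taking the supremum over $\vv$ with $\nnorm{\vv}=1$ gives~\eqref{eq:operator:lipschitz}.

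For ellipticity, write $\ww=\uu-\vv=(w,\chi)$ and expand $\dual{\BB{s}\uu-\BB{s}\vv}{\ww}$. The key algebraic cancellation is that the two off-diagonal coupling contributions $\dual{(\dlp^\dagger-\tfrac12)\chi}{w}_\Gamma$ (from the first equation) and $-\dual{\chi}{(\dlp-\tfrac12)w}_\Gamma$ (from the second equation) are negatives of each other, since $\dual{\dlp^\dagger\chi}{w}_\Gamma=\dual{\chi}{\dlp w}_\Gamma$ by definition of the adjoint and the $\pm\tfrac12\dual{\chi}{w}_\Gamma$ pieces match up. Hence they cancel completely --- and this is precisely why the symmetric coupling needs \emph{no} restriction on $\c{ellA}$, in contrast to the Bielak-MacCamy and Johnson-N\'ed\'elec couplings where a non-cancelling term forces a Young's-inequality estimate and the constraint $\c{ellA}>1/4$. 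What remains is
\begin{align*}
  \dual{\A\nabla u-\A\nabla v}{\nabla w}_\Omega + \dual{\hyp w}{w}_\Gamma
  + \dual{\chi}{\slp\chi}_\Gamma + |\dual{\xi}{\slp\chi+(\tfrac12-\dlp)w}_\Gamma|^2.
\end{align*}
Now I use strong monotonicity~\eqref{eq:ell_A} of $\A$ to bound the first term below by $\c{ellA}\norm{\nabla w}{L^2(\Omega)}^2$, and positive semi-definiteness of $\hyp$ to simply drop $\dual{\hyp w}{w}_\Gamma\ge0$. This yields
\begin{align*}
  \dual{\BB{s}\uu-\BB{s}\vv}{\ww}
  \ge \c{ellA}\norm{\nabla w}{L^2(\Omega)}^2 + \dual{\chi}{\slp\chi}_\Gamma
  + |\dual{\xi}{\slp\chi+(\tfrac12-\dlp)w}_\Gamma|^2,
\end{align*}
and pulling out $\min\{\c{ellA},1\}>0$ gives a lower bound by $\min\{\c{ellA},1\}\,\enorm{\ww}^2$ with $\enorm\cdot$ the norm from~\eqref{eq:equiv_norm_jn}.

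The final step is to invoke the norm equivalence $\enorm\cdot\simeq\nnorm\cdot$ on $\HH$ (the analogue of Lemma~\ref{lemma:equiv_norm}, already asserted in the text just before Theorem~\ref{thm:bmc_ell_jn}) to conclude $\dual{\BB{s}\uu-\BB{s}\vv}{\ww}\gtrsim\nnorm{\ww}^2$, which is~\eqref{eq:operator:monotone}. I do not expect any serious obstacle here: the only subtle points are the sign bookkeeping in the off-diagonal cancellation and checking that the argument of the stabilizing functional in the expansion genuinely reproduces $\enorm{\ww}$ as defined in~\eqref{eq:equiv_norm_jn} --- but since the second equation of~\eqref{eq:sym} coincides with that of the Johnson-N\'ed\'elec coupling, the stabilization term is exactly the one built into $\enorm\cdot$ there, so this matches by construction. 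The mild prerequisite is that $\hyp$ is positive semi-definite on all of $H^{1/2}(\Gamma)$ (it has a one-dimensional kernel spanned by constants, which is harmless since we only need $\ge 0$); this is a standard fact recalled in the references~\cite{hw,ss,s}.
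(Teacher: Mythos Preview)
Your argument is correct, but it proves the wrong statement. What you have written is a proof of the \emph{preceding theorem} --- that $\BB{s}$ is Lipschitz continuous and strongly monotone --- and indeed your proof matches the paper's proof of that theorem essentially line for line (including the off-diagonal cancellation $\dual{(\dlp^\dagger-\tfrac12)\chi}{w}_\Gamma-\dual{\chi}{(\dlp-\tfrac12)w}_\Gamma=0$, the use of $\dual{\hyp w}{w}_\Gamma\ge0$, the extraction of $\min\{\c{ellA},1\}$, and the appeal to the norm equivalence~\eqref{eq:equiv_norm_jn}).

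The corollary, however, asserts unique solvability of the \emph{non-stabilized} problems~\eqref{eq:sym_b} and~\eqref{eq:sym_b_galerkin} together with the C\'ea-type estimate~\eqref{eq:cea}. Strong monotonicity of $\BB{s}$ gives you, via the main theorem on strongly monotone operators (Section~\ref{sec:preliminaries:monotone}), unique solvability and quasi-optimality for the \emph{stabilized} problem~\eqref{eq:sym_bb}. To reach the corollary's conclusion you must then invoke Theorem~\ref{thm:equiv_b_sym}, which guarantees that the solutions of~\eqref{eq:sym_bb} and~\eqref{eq:sym_b_galerkin} (resp.\ their continuous counterparts) coincide; this is exactly how the paper's proof of Corollary~\ref{cor:cea} proceeds in the Bielak-MacCamy case. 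These two closing steps are short, but without them your argument stops at~\eqref{eq:operator:monotone} and never actually establishes existence, uniqueness, or~\eqref{eq:cea} for $\b{s}$.
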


\begin{rem} (i)
  Note that there is no restriction on the monotonicity constant $\c{ellA}>0$ of
  $\A$ as is the case for the Bielak-MacCamy and the Johnson-N\'ed\'elec coupling.
  \\ (ii)
  In contrast to~\cite{cs1995}, our analysis does not need the mesh-size
  $h_\ell$ to be
  sufficiently small to prove ellipticity of the coupling equations.
\end{rem}

\subsection{Residual-based error estimator}\label{sec:sym_errest}
The proof of the following result can be found in~\cite{cs1995} for $d=2$ and is achieved by similar techniques as in the proof of Theorem~\ref{thm:bmc_est}. It is therefore omitted.

\begin{thm}\label{thm:sym_est}
  Suppose that $\uu \in\HH$ is the unique solution of the symmetric
  coupling~\eqref{eq:sym_b} and
  $\UU_\ell \in \HH_\ell = \SS^1(\Omega) \times
  \PP^0(\EE_\ell^\Gamma)$ is its Galerkin approximation~\eqref{eq:sym_b_galerkin}.
  Then, there holds reliability in the sense of
  \begin{align*}
    \c{rel}^{-2} \nnorm{\uu-\UU_\ell}^2 \leq \mu_\ell^2 := \sum_{T\in\TT_\ell} \mu_\ell(T)^2 +
    \sum_{E\in\EE_\ell^\Omega \cup \EE_\ell^\Gamma} \mu_\ell(E)^2.
  \end{align*}
  The volume contributions for $\TT_\ell$ and $\EE_\ell^\Omega$ are the same as
  above, cf.~\eqref{eq:bmc_errest_vol1} and~\eqref{eq:bmc_errest_vol2}, but are
  denoted by $\mu_\ell(T)$ resp. $\mu_\ell(E)$.
  The boundary contributions read
  \begin{align*}
    \mu_\ell(E)^2 = &h_E\norm{\phi_0- \A\nabla U_\ell\cdot\NV+
    \hyp(u_0-U_\ell) \\
    &\hspace{4cm}-(\dlp^\dagger-\tfrac12)\Phi_\ell}{L^2(E)}^2 \\
       +&h_E\norm{\nabla_\Gamma( (\tfrac12-\dlp)(u_0-U_\ell)
      -\slp\Phi_\ell)}{L^2(E)}^2
  \end{align*}
  for $E\in\EE_\ell^\Gamma$.
  The constant $\c{rel} >0$ depends only on $\Omega$, $\Gamma$,
  and the $\gamma$-shape regularity of $\TT_\ell$ and $\EE_\ell^\Gamma$.\qed
\end{thm}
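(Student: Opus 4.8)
The plan is to follow the same template used for Theorem~\ref{thm:bmc_est}, since the structure of the symmetric coupling equations is nearly identical except for the appearance of the hypersingular operator $\hyp$ in the first equation. First I would recall that $\uu\in\HH$ and $\UU_\ell\in\HH_\ell$ satisfy $\b{s}(\uu,\vv)=\L(\vv)$ for all $\vv\in\HH$ and $\b{s}(\UU_\ell,\VV_\ell)=\L(\VV_\ell)$ for all $\VV_\ell\in\HH_\ell$, respectively. As in the Bielak-MacCamy case, the stabilization term in $\bb{s}$ evaluated on the error pair $\uu-\UU_\ell$ vanishes, because testing the second equation of~\eqref{eq:sym} (which coincides with that of the Johnson-N\'ed\'elec coupling) with $\Psi_\ell=\xi\in\YY_\ell$ shows $\dual{\xi}{\slp(\phi-\Phi_\ell)+(\tfrac12-\dlp)(u-U_\ell)}_\Gamma=0$. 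Hence ellipticity of $\BB{s}$ gives
\begin{align*}
  \nnorm{\uu-\UU_\ell}^2
  &\lesssim \dual{\BB{s}\uu-\BB{s}\UU_\ell}{\uu-\UU_\ell}
  = \dual{\B{s}\uu-\B{s}\UU_\ell}{\uu-\UU_\ell} \\
  &\leq \norm{\B{s}\uu-\B{s}\UU_\ell}{\HH^*}\,\nnorm{\uu-\UU_\ell},
\end{align*}
so that $\nnorm{\uu-\UU_\ell}\lesssim\sup_{\vv\in\HH\setminus\{0\}}|\L(\vv-\VV_\ell)-\dual{\B{s}\UU_\ell}{\vv-\VV_\ell}|/\nnorm{\vv}$ for any $\VV_\ell\in\HH_\ell$.

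Next I would choose $\VV_\ell=(\J_\ell v,0)$ with $\J_\ell$ the Cl\'ement/Scott-Zhang quasi-interpolant, exactly as in Theorem~\ref{thm:bmc_est}, and expand the residual. The first-component residual reads
\begin{align*}
  \dual{f}{v-\J_\ell v}_\Omega
  -\dual{\A\nabla U_\ell}{\nabla(v-\J_\ell v)}_\Omega
  +\dual{\phi_0+\hyp u_0-(\dlp^\dagger-\tfrac12)\Phi_\ell-\hyp U_\ell}{v-\J_\ell v}_\Gamma,
\end{align*}
plus the second-component residual $\dual{\psi}{(\tfrac12-\dlp)(U_\ell-u_0)+\slp\Phi_\ell}_\Gamma$. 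The volume term in $f$ and the element-residual/jump terms arising from piecewise integration by parts of $\dual{\A\nabla U_\ell}{\nabla(v-\J_\ell v)}_\Omega$ are handled verbatim as before: since $\A\nabla U_\ell\in\PP^0(\TT_\ell)$ the element residual $-\div\A\nabla U_\ell$ vanishes, leaving the volume contribution $h_T\norm{f}{L^2(T)}$ and the interior-edge jump $h_E^{1/2}\norm{[\A\nabla U_\ell\cdot\NV]}{L^2(E)}$, both estimated via~\eqref{eq:intop_approx} and~\eqref{eq:intop_trace}. The boundary face terms combine into $\norm{h_\ell^{1/2}(\phi_0+\hyp(u_0-U_\ell)-(\dlp^\dagger-\tfrac12)\Phi_\ell-\A\nabla U_\ell\cdot\NV)}{L^2(\Gamma)}$, again using the trace-type estimate~\eqref{eq:intop_trace}; here one uses the extra regularity $\phi_0\in L^2(\Gamma)$, $u_0\in H^1(\Gamma)$ together with mapping properties of $\hyp$ and $\dlp^\dagger$ so that the integrand lies in $L^2(E)$. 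Finally, the second-component residual is bounded using $\dual{1}{(\tfrac12-\dlp)(U_\ell-u_0)+\slp\Phi_\ell}_E=0$ for each $E\in\EE_\ell^\Gamma$ (test the second equation with the characteristic function of $E$, which lies in $\PP^0(\EE_\ell^\Gamma)$) and then invoking~\cite[Corollary~4.2]{cms} to get $\norm{(\tfrac12-\dlp)(u_0-U_\ell)-\slp\Phi_\ell}{H^{1/2}(\Gamma)}^2\lesssim\sum_E h_E\norm{\nabla_\Gamma((\tfrac12-\dlp)(u_0-U_\ell)-\slp\Phi_\ell)}{L^2(E)}^2$, whence $\dual{\psi}{\cdot}_\Gamma\lesssim\norm{\psi}{H^{-1/2}(\Gamma)}\,\norm{h_\ell^{1/2}\nabla_\Gamma((\tfrac12-\dlp)(u_0-U_\ell)-\slp\Phi_\ell)}{L^2(\Gamma)}$.

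Collecting all four contributions and dividing by $\nnorm{\vv}$ yields the asserted bound with $\mu_\ell(T)$, $\mu_\ell(E)$ as stated, with $\c{rel}$ depending only on $\Omega$, $\Gamma$, and the shape-regularity constant $\gamma$ through the interpolation estimates and the constant of~\cite[Corollary~4.2]{cms}. The only genuinely new point compared to Theorem~\ref{thm:bmc_est} is bookkeeping the hypersingular term: one must check that, under $u_0\in H^1(\Gamma)$ and $U_\ell\in\SS^1(\TT_\ell)$, the quantity $\hyp(u_0-U_\ell)$ can be paired in the $L^2(\Gamma)$ duality against $v-\J_\ell v$ and absorbed into the boundary residual in $L^2(E)$-norm — so I expect the main (mild) obstacle to be verifying this regularity/mapping bookkeeping rather than any new analytical idea. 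Since this is exactly the argument of~\cite{cs1995} for $d=2$ adapted along the lines of Theorem~\ref{thm:bmc_est}, the proof is omitted.
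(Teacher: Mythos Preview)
Your proposal is correct and follows exactly the approach the paper indicates: the paper omits the proof and states that it ``can be found in~\cite{cs1995} for $d=2$ and is achieved by similar techniques as in the proof of Theorem~\ref{thm:bmc_est}'', which is precisely the template you reproduce, including the vanishing of the stabilization term on the error, the choice $\VV_\ell=(\J_\ell v,0)$, elementwise integration by parts, the trace-type estimate~\eqref{eq:intop_trace}, and the use of~\cite[Corollary~4.2]{cms} for the second equation. The only additional bookkeeping you flag---that $\hyp(u_0-U_\ell)\in L^2(\Gamma)$ under the regularity $u_0\in H^1(\Gamma)$ and $U_\ell|_\Gamma\in\SS^1(\EE_\ell^\Gamma)\subset H^1(\Gamma)$---is exactly right and follows from the standard mapping property $\hyp:H^1(\Gamma)\to L^2(\Gamma)$.
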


\section{Convergence of adaptive scheme}
\label{sec:convergence}
In this section, we state an adaptive algorithm for the three different coupling
methods and prove convergence of the adaptive scheme for the Bielak-MacCamy
coupling.

\subsection{Adaptive algorithm \& Mesh-refinement}\label{sec:alg_meshref}
In the following, we fix one particular coupling and let
$\UU_\ell\in\HH_\ell:=\SS^1(\TT_\ell)\times\PP^0(\EE_\ell^\Gamma)$ denote the corresponding Galerkin solution, i.e.\ $\UU_\ell$ solves either problem~\eqref{eq:bmc_b_galerkin}, \eqref{eq:jn_b_galerkin},
or~\eqref{eq:sym_b_galerkin}. By $\zeta_\ell$, we denote the corresponding
residual-based error estimator, cf.\
Sections~\ref{sec:bmc_errest},~\ref{sec:jn_errest}, and~\ref{sec:sym_errest}.
Under these assumptions, the standard adaptive scheme reads as follows:

\begin{alg}\label{alg:adaptive}
  \textsc{Input: } Initial triangulation $(\TT_0, \EE_0^\Gamma)$,
  counter $\ell:=0$, and adaptivity parameter $0<\theta<1$.
  \begin{itemize}
    \item[(i)] Compute discrete solution $\UU_\ell \in\HH_\ell$.
    \item[(ii)] Compute local refinement indicators $\zeta_\ell(\tau)$ for all
    $\tau \in \TT_\ell \cup \EE_\ell^\Omega \cup \EE_\ell^\Gamma$.
    \item[(iii)] Find (minimal) set $\MM_\ell \subseteq \TT_\ell \cup \EE_\ell^\Omega
    \cup \EE_\ell^\Gamma$ such that the error estimator $\zeta_\ell$ fulfills
    the \Doerfler~marking
    \begin{align}
      \theta \zeta_\ell^2 \leq \sum_{T\in\MM_\ell\cap\TT_\ell} \zeta_\ell(T)^2 +
      \sum_{E\in\MM_\ell\cap(\EE_\ell^\Omega \cup \EE_\ell^\Gamma)}
      \zeta_\ell(E)^2.
    \end{align}
    \item[(iv)] Obtain new mesh $(\TT_{\ell+1},\EE_{\ell+1}^\Gamma)$ by refining at
    least all elements of the set $\MM_\ell$.
    \item[(v)] Increase counter $\ell$ by one and goto (i).
  \end{itemize}
  \textsc{Output: } Sequence of Galerkin solutions $(\UU_\ell)_{\ell\in\N_0}$ and
  sequence of error estimators $(\zeta_\ell)_{\ell\in\N_0}$.\qed
\end{alg}

To prove convergence of the adaptive algorithm, we need certain assumptions on
the mesh-refining strategy used in step \rm{(iv)}.
\begin{ass}[Mesh-refinement]\label{ass:meshref}
  Let $(\TT_\ell,\EE_\ell^\Gamma)$ be a sequence of regular triangulations, where
  $\TT_{\ell+1}$ is obtained from $\TT_\ell$ and $\EE_{\ell+1}^\Gamma$ from
  $\EE_\ell^\Gamma$ by local mesh-refinement.
  Then, there are $\ell$-independent constants
  $0<\gamma<\infty$ and $0<q<1$ such that the following holds
  \begin{itemize}
    \item[$\bullet$] The triangulations $(\TT_\ell,\EE_\ell^\Gamma)$ are $\gamma$-shape regular.
    \item[$\bullet$] A refined element $T\in\TT_\ell$ resp.\ face $E\in\EE_\ell^\Gamma$ is the union
    of its sons $T'\in\TT_{\ell+1}$ resp.\ $E'\in\EE_{\ell+1}^\Gamma$.
    \item[$\bullet$] The sons $T' \in \TT_{\ell+1}$ of a refined element $T\in\TT_\ell$
    satisfy $|T'| \leq q |T|$. The sons $E'\in\EE_{\ell+1}^\Omega$ of a refined
    interior face $E\in\EE_\ell^\Omega$ satisfy $|E'|\leq q |E|$.
    \item[$\bullet$] The sons $E' \in\EE_{\ell+1}^\Gamma$ of a refined boundary face
    $E\in\EE_\ell^\Gamma$ satisfy $|E'|\leq q |E|$.\qed
  \end{itemize}
\end{ass}

We note that Assumption~\ref{ass:meshref} implies nestedness of the discrete
spaces, i.e. $\HH_\ell \subseteq \HH_{\ell+1}$.
Moreover, we get
\begin{align}\label{eq:mred_T}
  h_{\ell+1}|_T \leq \begin{cases} q^{1/d} h_\ell|_T &\quad\text{for }
  T\in\TT_\ell\backslash\TT_{\ell+1}, \\
  h_\ell|_T &\quad\text{for } T\in\TT_{\ell+1}\cap\TT_\ell,
  \end{cases}
\end{align}
\begin{align}\label{eq:mred_Eint}
  h_{\ell+1}|_E \leq \begin{cases} q^{1/(d-1)} h_\ell|_E &\quad\text{for }
  E\in\EE_\ell^\Omega\backslash\EE_{\ell+1}^\Omega, \\
  h_\ell|_E &\quad\text{for } E\in\EE_{\ell+1}^\Omega\cap\EE_\ell^\Omega,
  \end{cases}
\end{align}
and
\begin{align}\label{eq:mred_E}
  h_{\ell+1}|_E \leq \begin{cases} q^{1/(d-1)} h_\ell|_E &\quad\text{for }
  E\in\EE_\ell^\Gamma\backslash\EE_{\ell+1}^\Gamma, \\
  h_\ell|_E &\quad\text{for } E\in\EE_{\ell+1}^\Gamma\cap\EE_\ell^\Gamma.
  \end{cases}
\end{align}
We stress that 2D and 3D newest vertex bisection, e.g.~\cite{stevenson},
satisfy Assumption~\ref{ass:meshref}.

\subsection{Convergence of adaptive algorithm}\label{sec:conv_alg}
In this subsection, we show convergence of the adaptive scheme presented above.
With the quasi-optimality~\eqref{eq:cea} at hand, one can prove
convergence of the sequence $\UU_\ell$ to a limit $\UU_\infty \in \HH_\infty$, where
$\HH_\infty$ is the closure of $\bigcup_{\ell=0}^\infty \HH_\ell$ and where $\UU_\infty$ is the unique Galerkin solution in $\HH_\infty$.
In particular, this implies $\nnorm{\UU_{\ell+1}-\UU_\ell}\rightarrow 0$ as $\ell\to\infty$.
A priori, it is unclear if $\uu = \UU_\infty$, since adaptive
mesh-refining may lead to meshes, where the local mesh-size $h_\ell$
does not tend to zero in $L^\infty(\Omega)$ and $L^\infty(\Gamma)$.

To prove convergence, $\uu=\UU_\infty$, we follow the estimator reduction
principle of~\cite{estconv} as is done in~\cite{afp} for $(h-h/2)$-type error estimators and the symmetric coupling: By use of the D\"orfler marking and the mesh-refinement strategy, we verify a perturbed contraction estimate
\begin{align}\label{eq:estredest}
  \zeta_{\ell+1}^2
  \leq \kappa\, \zeta_\ell^2
  + \c{reduction}\nnorm{\UU_{\ell+1}- \UU_\ell}^2,
\end{align}
with certain $\ell$-independent constants $0<\kappa<1$ and $\c{reduction}>0$.
Together with the a~priori convergence, elementary calculus proves
$\zeta_\ell\rightarrow 0$ as $\ell\to\infty$. Finally, reliability of
the error estimator $\zeta_\ell$ proves $\UU_\ell\rightarrow \uu$ as $\ell\rightarrow\infty$.

One main ingredient for the proof of the estimator reduction estimate~\eqref{eq:estredest} are the following
novel inverse-type estimates from~\cite{invest3D} for the boundary integral
operators involved.

\begin{lem}\label{lemma:invest}
 There exists a constant $C_{\rm inv}>0$ such that the estimates
 \begin{align*}
   \norm{h_\ell^{1/2}\nabla_\Gamma \dlp V_\ell}{L^2(\Gamma)} &\leq C_{\rm inv}
   \norm{V_\ell}{H^{1/2}(\Gamma)}, \\
   \norm{h_\ell^{1/2}\hyp V_\ell}{L^2(\Gamma)} &\leq C_{\rm inv}
   \norm{V_\ell}{H^{1/2}(\Gamma)}, \\
   \norm{h_\ell^{1/2}\nabla_\Gamma \slp\Psi_\ell}{L^2(\Gamma)} &\leq C_{\rm inv}
   \norm{\Psi_\ell}{H^{-1/2}(\Gamma)}, \\
   \norm{h_\ell^{1/2}\dlp^\dagger \Psi_\ell}{L^2(\Gamma)} &\leq C_{\rm inv}
   \norm{\Psi_\ell}{H^{-1/2}(\Gamma)},
 \end{align*}
 hold for all discrete functions $V_\ell\in\SS^1(\TT_\ell)$ and $\Psi_\ell\in
 \PP^0(\EE_\ell^\Gamma)$. The constant $C_{\rm inv}$ depends only on $\Gamma$
 and the $\gamma$-shape regularity of $\EE_\ell^\Gamma$.\qed
\end{lem}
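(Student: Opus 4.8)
The plan is to prove the four inverse-type estimates of Lemma~\ref{lemma:invest} by a combination of local inverse estimates on the discrete spaces, mapping properties of the boundary integral operators, and a duality/localization argument; the technical heart lies in controlling the negative-order Sobolev norms that arise when one tries to localize $H^{1/2}(\Gamma)$- and $H^{-1/2}(\Gamma)$-norms against the weight $h_\ell^{1/2}$.

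\textbf{Step 1: Reduction to a single-operator estimate.} I would first observe that all four estimates have the same structure: a boundary integral operator $\mathfrak{T} \in \{\nabla_\Gamma\dlp,\ \hyp,\ \nabla_\Gamma\slp,\ \dlp^\dagger\}$ is applied to a discrete function, the result is measured in the weighted $L^2$-norm $\norm{h_\ell^{1/2}\mathfrak{T}(\cdot)}{L^2(\Gamma)}$, and this is to be bounded by the natural energy norm of the argument. Since $\slp:H^{-1/2}(\Gamma)\to H^{1/2}(\Gamma)$, $\hyp:H^{1/2}(\Gamma)\to H^{-1/2}(\Gamma)$, $\dlp:H^{1/2}(\Gamma)\to H^{1/2}(\Gamma)$, and $\dlp^\dagger:H^{-1/2}(\Gamma)\to H^{-1/2}(\Gamma)$ are all bounded, without the weight the left-hand sides would only be controlled in $H^{\pm1/2}(\Gamma)$, not in $L^2(\Gamma)$; the point of the weight $h_\ell^{1/2}$ is exactly to buy back the missing $1/2$ derivative \emph{locally}, element by element. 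So the key local estimate I would isolate is, for each face $E\in\EE_\ell^\Gamma$, a bound of the form $\norm{h_E^{1/2}\mathfrak{T}v}{L^2(E)}\lesssim \norm{\mathfrak{T}v}{H^{-1/2}(\omega_E)}$ (or the appropriate dual version), i.e.\ an inverse-type estimate valid for functions that are \emph{not} discrete but happen to lie in a piecewise-polynomial image — here one uses that $\slp\Psi_\ell$, $\dlp V_\ell$ etc.\ are smooth away from the mesh skeleton and that the relevant traces are piecewise smooth.

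\textbf{Step 2: Splitting into a ``smooth part'' and a ``boundary-layer part''.} The actual mechanism, following~\cite{invest3D}, is to write $\slp\Psi_\ell$ (and analogously the other potentials) as the trace of the single-layer \emph{volume} potential $\widetilde\slp\Psi_\ell$, which is harmonic in $\Omega$ and in $\Omega^{\rm ext}$, and to exploit interior regularity of harmonic functions together with a Caccioppoli-type inequality on mesh-sized balls: on a ball $B$ of radius $\sim h_E$ centered on $E$, one has $\norm{\nabla^2(\widetilde\slp\Psi_\ell)}{L^2(B\cap\Omega)}\lesssim h_E^{-1}\norm{\nabla(\widetilde\slp\Psi_\ell)}{L^2(2B\cap\Omega)}$, and summing over $E$ and invoking $\norm{\nabla(\widetilde\slp\Psi_\ell)}{L^2(\mathbb R^d)}^2 = \dual{\Psi_\ell}{\slp\Psi_\ell}_\Gamma \simeq \norm{\Psi_\ell}{H^{-1/2}(\Gamma)}^2$ (the norm equivalence and the identity recalled in Section~\ref{sec:preliminaries:bem} and just before the proof of Theorem~\ref{thm:bmc_ell}) yields $\norm{h_\ell^{1/2}\nabla_\Gamma\slp\Psi_\ell}{L^2(\Gamma)}\lesssim\norm{\Psi_\ell}{H^{-1/2}(\Gamma)}$ and likewise $\norm{h_\ell^{1/2}\dlp^\dagger\Psi_\ell}{L^2(\Gamma)}\lesssim\norm{\Psi_\ell}{H^{-1/2}(\Gamma)}$ once one writes $\dlp^\dagger\Psi_\ell$ via the conormal jump $\partial_\NV^{\rm ext}\widetilde\slp\Psi_\ell = -(\tfrac12-\dlp^\dagger)\Psi_\ell$. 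For the two estimates involving $V_\ell\in\SS^1(\TT_\ell)$ one uses instead the double-layer volume potential $\widetilde\dlp V_\ell$, harmonic off $\Gamma$, whose energy is controlled by $\norm{V_\ell}{H^{1/2}(\Gamma)}$ via $\dual{V_\ell}{\hyp V_\ell}_\Gamma = \norm{\nabla\widetilde\dlp V_\ell}{L^2(\mathbb R^d\setminus\Gamma)}^2 \simeq \norm{V_\ell}{H^{1/2}(\Gamma)}^2$ (up to the kernel of $\hyp$, handled by the usual quotient argument since $\hyp$ is positive on $H^{1/2}(\Gamma)/\mathbb R$ and $V_\ell\mapsto\nabla_\Gamma\dlp V_\ell$, $V_\ell\mapsto\hyp V_\ell$ annihilate constants).

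\textbf{Main obstacle.} The delicate point — and where I expect the real work to be — is the behaviour of $\nabla^2(\widetilde\slp\Psi_\ell)$ (equivalently $\nabla_\Gamma\dlp V_\ell$ and $\hyp V_\ell$) \emph{near the mesh skeleton}, where the layer densities are only piecewise smooth, so the Caccioppoli estimate on a ball straddling an edge of $\EE_\ell^\Gamma$ is not immediate. One must combine the interior-regularity argument on ``good'' balls (well inside an element) with a separate treatment of a tubular neighbourhood of the skeleton, using the explicit jump relations of the integral operators across $\Gamma$ together with local inverse estimates on the piecewise-polynomial densities ($\norm{\nabla_\Gamma\Psi_\ell}{\cdot}$-type bounds are unavailable since $\Psi_\ell$ is only piecewise constant, so one instead bounds the relevant quantities by the $L^2$-norm of the density times $h_E^{-1/2}$ via scaling). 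Getting the powers of $h_E$ to match exactly so that the global sum telescopes into the energy norm — and verifying that the hidden constants depend only on $\Gamma$ and on $\gamma$-shape regularity, not on the (possibly highly graded) local mesh-size — is the crux; this is precisely the content of~\cite{invest3D}, so I would carry it out by citing and adapting those arguments rather than reproving them from scratch.
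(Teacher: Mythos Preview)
The paper does not prove this lemma: it is stated with a \qed{} and attributed to~\cite{invest3D} in the sentence immediately preceding it. Your proposal ends in the same place --- citing~\cite{invest3D} --- and the route you sketch (represent the boundary operators via the harmonic volume potentials $\widetilde\slp\Psi_\ell$ and $\widetilde\dlp V_\ell$, apply Caccioppoli-type interior estimates on mesh-sized balls, pass to the boundary by trace inequalities, and sum using the energy identities $\dual{\Psi_\ell}{\slp\Psi_\ell}_\Gamma=\norm{\nabla\widetilde\slp\Psi_\ell}{L^2(\R^d)}^2$ and $\dual{V_\ell}{\hyp V_\ell}_\Gamma=\norm{\nabla\widetilde\dlp V_\ell}{L^2(\R^d\setminus\Gamma)}^2$) is indeed the strategy of that reference, so your proposal is consistent with what the paper does.

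One remark on your sketch: the genuine difficulty is not only at the mesh skeleton of $\EE_\ell^\Gamma$ but already at $\Gamma$ itself, since the volume potentials are harmonic only on $\R^d\setminus\Gamma$ and a ball of radius $\sim h_E$ centred on $E$ necessarily straddles $\Gamma$; a naive Caccioppoli estimate on such a ball is not available. The argument in~\cite{invest3D} handles this by working one-sidedly in $\Omega$ and $\Omega^{\rm ext}$, combining a multiplicative trace inequality on a strip of width $\sim h_E$ with the one-sided Caccioppoli estimate, and then invoking an inverse estimate on the discrete density to control the near-field contribution --- your ``main obstacle'' paragraph gestures at this but conflates the across-$\Gamma$ issue with the across-skeleton issue.
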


With this lemma, we can prove the main result of this section, which states convergence of the adaptive Bielak-MacCamy coupling. The same result also holds for the adaptive Johnson-N\'ed\'elec coupling as well as the adaptive symmetric coupling. The latter is treated in~\cite{invest3D}.

\begin{thm}\label{thm:estred}
Let $\uu\in\HH$ be the solution of the Bielak-MacCamy equations~\eqref{eq:bmc}. Let $\UU_\ell$ be the sequence of Ga\-ler\-kin solutions generated by Algorithm~\ref{alg:adaptive}. Then, the sequence of corresponding error estimators $\zeta_\ell = \rho_\ell$ fulfills the estimator reduction estimate~\eqref{eq:estredest}. In particular, this implies convergence, i.e.
$\displaystyle \lim\limits_{\ell\rightarrow\infty} \nnorm{\uu-\UU_\ell} = 0 =
  \lim\limits_{\ell\rightarrow\infty} \rho_\ell$.
\end{thm}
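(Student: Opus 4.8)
The plan is to establish the perturbed contraction estimate~\eqref{eq:estredest} for $\zeta_\ell = \rho_\ell$, and then combine it with the a~priori convergence $\nnorm{\UU_{\ell+1}-\UU_\ell}\to0$ (which follows from quasi-optimality~\eqref{eq:cea} and nestedness~\eqref{dp:condition}) and reliability (Theorem~\ref{thm:bmc_est}). First, I would fix the mesh-size reduction factors from Assumption~\ref{ass:meshref}: on refined elements/faces, $h_{\ell+1}\le q^{1/d}h_\ell$ resp.\ $h_{\ell+1}\le q^{1/(d-1)}h_\ell$, while $h_{\ell+1}\le h_\ell$ on unrefined ones, cf.~\eqref{eq:mred_T}--\eqref{eq:mred_E}. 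The D\"orfler marking in step~(iii) of Algorithm~\ref{alg:adaptive} guarantees that all elements carrying a fixed fraction $\theta$ of the estimator mass get refined, so summing the reduced weights against the (fixed) local quantities yields a factor strictly less than one on the part of $\rho_\ell^2$ built from data that does not change under refinement.

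The technical core is to compare $\rho_{\ell+1}(\tau)$ with $\rho_\ell(\tau)$ term by term. For the volume residual $h_T^2\norm{f}{L^2(T)}^2$ and the interior jump term $h_E\norm{[\A\nabla U_\ell\cdot\NV]}{L^2(E)}^2$, I would use the inverse estimate for $\A\nabla(\cdot)$ on piecewise affine functions together with $\norm{\A\nabla U_{\ell+1}-\A\nabla U_\ell}{L^2(\Omega)}\le\c{lipA}\norm{\nabla(U_{\ell+1}-U_\ell)}{L^2(\Omega)}$ (from~\eqref{eq:lip_A}) and a Young inequality to split off the perturbation term $\c{reduction}\nnorm{\UU_{\ell+1}-\UU_\ell}^2$; the remaining ``frozen-coefficient'' part inherits the mesh-size reduction. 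For the boundary contribution on $E\in\EE_\ell^\Gamma$, namely $h_E\norm{\phi_0+(\dlp^\dagger-\tfrac12)\Phi_\ell-\A\nabla U_\ell\cdot\NV}{L^2(E)}^2+h_E\norm{\nabla_\Gamma(U_\ell-u_0-\slp\Phi_\ell)}{L^2(E)}^2$, I would repeatedly use the triangle inequality in $L^2(\Gamma)$ with weight $h_\ell^{1/2}$, replace $\Phi_\ell\mapsto\Phi_{\ell+1}$ and $U_\ell\mapsto U_{\ell+1}$, and control the differences $h_\ell^{1/2}\dlp^\dagger(\Phi_{\ell+1}-\Phi_\ell)$, $h_\ell^{1/2}\nabla_\Gamma\slp(\Phi_{\ell+1}-\Phi_\ell)$, and $h_\ell^{1/2}\nabla_\Gamma\dlp(U_{\ell+1}-U_\ell)$ by the novel inverse-type estimates of Lemma~\ref{lemma:invest}, which bound them by $\norm{\Phi_{\ell+1}-\Phi_\ell}{H^{-1/2}(\Gamma)}$ resp.\ $\norm{U_{\ell+1}-U_\ell}{H^{1/2}(\Gamma)}\lesssim\norm{U_{\ell+1}-U_\ell}{H^1(\Omega)}$. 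Since $\UU_{\ell+1},\UU_\ell\in\HH_{\ell+1}$, these discrete estimates apply on the \emph{finer} mesh $\EE_{\ell+1}^\Gamma$, and one uses $h_{\ell+1}\le h_\ell$ to pass back; all such contributions are absorbed into the perturbation term with a large constant $\c{reduction}$.

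Collecting the term-by-term bounds, every piece of $\rho_{\ell+1}^2$ is estimated either by $q^{1/d}$ (resp.\ $q^{1/(d-1)}$) times a corresponding piece over refined $\tau$, plus the same piece over unrefined $\tau$, plus $\c{reduction}\nnorm{\UU_{\ell+1}-\UU_\ell}^2$. Splitting $\rho_\ell^2$ into its marked and unmarked parts and invoking the D\"orfler bound $\theta\rho_\ell^2\le\sum_{\tau\in\MM_\ell}\rho_\ell(\tau)^2$ gives $\rho_{\ell+1}^2\le(1-(1-q^{1/d})\theta)\rho_\ell^2+\c{reduction}\nnorm{\UU_{\ell+1}-\UU_\ell}^2$ after a Young inequality to handle the cross terms, so $\kappa:=1-(1-q^{1/d})\theta\in(0,1)$. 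Finally, the standard estimator-reduction lemma from~\cite{estconv}: since $\kappa<1$ and $\nnorm{\UU_{\ell+1}-\UU_\ell}^2\to0$, a telescoping/Cauchy argument yields $\rho_\ell\to0$; reliability then forces $\nnorm{\uu-\UU_\ell}\le\c{rel}\rho_\ell\to0$.

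The main obstacle is the bookkeeping around the Young inequalities in the boundary term: one must ensure that the constant multiplying $\rho_\ell^2$ (as opposed to the perturbation term) stays strictly below one \emph{uniformly in $\ell$}, which requires choosing the Young parameters depending only on $q$ and $\theta$, and it requires that the inverse estimates of Lemma~\ref{lemma:invest} and the quasi-interpolation/trace constants are $\ell$-independent — this is exactly where $\gamma$-shape regularity (Assumption~\ref{ass:meshref}) enters. The subtlety is that the ``new'' data-independent error contributions created on $\EE_{\ell+1}^\Gamma$ (from the nonlocal operators $\slp,\dlp,\dlp^\dagger$ acting on the \emph{old} discrete functions) must genuinely be shown to be of perturbation type, i.e.\ controlled by $\nnorm{\UU_{\ell+1}-\UU_\ell}$ rather than by $\rho_\ell$; this is the payoff of Lemma~\ref{lemma:invest} and the reason the analysis of~\cite{invest3D} is needed rather than a purely local Verf\"urth-type argument.
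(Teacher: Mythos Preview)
Your proposal is correct and follows essentially the same route as the paper: term-by-term comparison of $\rho_{\ell+1}$ with $\rho_\ell$ via triangle/Young inequalities, mesh-size reduction~\eqref{eq:mred_T}--\eqref{eq:mred_E} on refined elements, the inverse-type estimates of Lemma~\ref{lemma:invest} for the nonlocal boundary pieces, D\"orfler marking to extract a contraction factor, and the estimator-reduction lemma from~\cite{estconv} together with a~priori convergence and reliability. Two small slips: for the Bielak--MacCamy estimator there is no $\dlp$ in the second boundary term, so the difference you need is $h_{\ell+1}^{1/2}\nabla_\Gamma(U_{\ell+1}-U_\ell)$ (handled by a standard inverse estimate, cf.~\cite{akp}) rather than $h_{\ell+1}^{1/2}\nabla_\Gamma\dlp(U_{\ell+1}-U_\ell)$; and the final contraction constant carries a factor $(1+\delta)$ from Young and the exponent $q^{1/(d-1)}$ (the paper absorbs $q^{2/d}\le q^{1/(d-1)}$), so $\kappa=(1+\delta)\bigl(1-\theta(1-q^{1/(d-1)})\bigr)$ with $\delta>0$ chosen small enough.
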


\begin{proof}
To abbreviate notations, we define
\begin{align*}
  \rho_\ell(\FF)^2 := \sum_{T\in \FF\cap\TT_\ell} \rho_\ell(T)^2
  + \sum_{E\in \FF \cap (\EE_\ell^\Omega\cup \EE_\ell^\Gamma)} \rho_\ell(E)^2
\end{align*}
for any subset $\FF\subseteq \TT_\ell \cup \EE_\ell^\Omega \cup
\EE_\ell^\Gamma$.
We aim to estimate each contribution of the error estimator
\begin{align}\label{eq:rho_cont}
  \rho_{\ell+1}^2 = \rho_{\ell+1}(\TT_{\ell+1})^2 + \rho_{\ell+1}
  (\EE_{\ell+1}^\Omega)^2 + \rho_{\ell+1}(\EE_{\ell+1}^\Gamma)^2.
\end{align}

\textbf{Step~1. }
By use of~\eqref{eq:mred_T}, we can estimate the first volume
contribution $\rho_{\ell+1}(\TT_{\ell+1})^2$ by
\begin{align}\label{eq:rho_vol}
\begin{split}
  \rho_{\ell+1}(\TT_{\ell+1})^2 &= \norm{h_{\ell+1} f}{L^2(\Omega)}^2 =
  \sum_{T\in\TT_\ell} \norm{h_{\ell+1} f}{L^2(T)}^2 \\
  &\leq \sum_{T\in\TT_\ell\cap\TT_{\ell+1}} \norm{h_\ell f}{L^2(T)}^2 \\
  &\hspace{0.5cm}+q^{2/d}\sum_{T\in\TT_\ell\backslash\TT_{\ell+1}}
  \norm{h_\ell f}{L^2(T)}^2 \\
  & = \sum_{T\in\TT_\ell} \norm{h_\ell f}{L^2(T)}^2 \\
  &\hspace{0.5cm}- (1- q^{2/d})
  \sum_{T\in\TT_\ell\backslash\TT_{\ell+1}} \norm{h_\ell f}{L^2(T)}^2 \\
  &= \rho_\ell(\TT_\ell)^2 - (1-q^{2/d}) \rho_\ell(\TT_\ell\backslash
  \TT_{\ell+1})^2,
\end{split}
\end{align}
where $q$ denotes the mesh-reduction constant from Assumption~\ref{ass:meshref}.

\textbf{Step~2. }
To estimate the second term in~\eqref{eq:rho_cont}, we use the Young
inequality $(a+b)^2\leq (1+\delta)a^2 +
(1+\delta^{-1})b^2$ for arbitrary $a,b\in\R$ and $\delta>0$.
With this and the triangle inequality, we see
\begin{align}\label{eq:rho_intedge1}
\begin{split}
 &\rho_{\ell+1}(\EE_{\ell+1}^\Omega)^2 \\
 &\quad= \sum_{E\in\EE_{\ell+1}^\Omega}
 \norm{h_{\ell+1}^{1/2}[\A\nabla U_{\ell+1} \cdot\NV]}{L^2(E)}^2 \\
 &\quad\leq (1+\delta) \sum_{E\in\EE_{\ell+1}^\Omega} \norm{h_{\ell+1}^{1/2}
 [\A\nabla U_\ell\cdot\NV]}{L^2(E)}^2 \\
 &\quad\quad+(1+\delta^{-1})
 \\&\quad\quad\quad\quad
 \sum_{E\in\EE_{\ell+1}^\Omega}
 \norm{h_{\ell+1}^{1/2} [(\A\nabla U_{\ell+1}-\A\nabla U_\ell)\cdot\NV]}
 {L^2(E)}^2.\hspace*{-1cm}
\end{split}
\end{align}
Recall that $\A\nabla U_\ell\in(\PP^0(\TT_\ell))^2$. Therefore,
$[\A\nabla U_\ell\cdot\NV]|_{E_1'} = [\A\nabla U_\ell\cdot\NV]|_{E_2'}$ for all sons
$E_1',E_2'\in\EE_{\ell+1}^\Omega$ of a refined
element $E\in\EE_\ell^\Omega$.
Furthermore, jumps over new interior faces vanish.
With this and~\eqref{eq:mred_Eint}, we can estimate the first sum on the right-hand side of
\eqref{eq:rho_intedge1} by
\begin{align*}
  \sum_{E\in\EE_{\ell+1}^\Omega} &\norm{h_{\ell+1}^{1/2}
  [\A\nabla U_\ell\cdot\NV]}{L^2(E)}^2 \\
  &\leq \sum_{E\in\EE_\ell^\Omega \cap \EE_{\ell+1}^\Omega}
  \norm{h_\ell^{1/2} [\A\nabla U_\ell\cdot\NV]}{L^2(E)}^2 \\
  &\hspace{0.5cm}+ q^{1/(d-1)} \sum_{E\in\EE_\ell^\Omega \backslash \EE_{\ell+1}^\Omega}
  \norm{h_\ell^{1/2}[\A\nabla U_\ell\cdot\NV]}{L^2(E)}^2 \\
  &= \rho_\ell(\EE_\ell^\Omega)^2 - (1-q^{1/(d-1)}) \rho_\ell^\Omega(\EE_\ell\backslash
  \EE_{\ell+1}^\Omega)^2.
\end{align*}
The summands in the second sum on the right-hand side of~\eqref{eq:rho_intedge1}
are bounded from above by use
of the pointwise
Lipschitz continuity of $\A$ and a scaling argument, i.e.
\begin{align}\label{eq:rho_intedge2}
\begin{split}
  \norm{h_{\ell+1}^{1/2}[(\A\nabla U_{\ell+1} &-
  \A\nabla U_\ell)\cdot\NV]}{L^2(E)}^2 \\
  \hspace{1cm}&\lesssim \c{lipA}^2 \norm{\nabla(U_{\ell+1}-U_\ell)}
  {L^2(\omega_{\ell+1,E})}^2.
\end{split}
\end{align}
We sum~\eqref{eq:rho_intedge2} over all interior faces and get
\begin{align}\label{eq:rho_intedge3}
\begin{split}
  &\rho_{\ell+1}(\EE_{\ell+1}^\Omega)^2 \\
  &\;\,\leq (1+\delta)
  \big(\rho_\ell(\EE_\ell^\Omega)^2
  - (1-q^{1/(d-1)})\rho_\ell( \EE_\ell^\Omega
  \backslash \EE_{\ell+1}^\Omega)^2\big) \\
  &\;\quad\quad+ (1+\delta^{-1}) C \norm{\nabla(U_{\ell+1}-U_\ell)}{L^2(\Omega)}^2,
\end{split}
\end{align}
where the constant $C>0$ depends only on the $\gamma$-shape regularity of
$\TT_{\ell+1}$ and the Lipschitz constant of $\A$.

\textbf{Step~3. }
We consider the boundary contributions of
$\rho_{\ell+1}^2$ and introduce the splitting $\rho_\ell(E)^2=
\rho_\ell^{(1)}(E)^2 + \rho_\ell^{(2)}(E)^2$, where
\begin{align*}
  \rho_\ell^{(1)}(E) &= \norm{h_\ell^{1/2} (\phi_0 + (\dlp^\dagger -
  \tfrac12)\Phi_\ell - \A\nabla U_\ell\cdot\NV)}{L^2(E)}, \\
  \rho_\ell^{(2)}(E) &=\norm{h_\ell^{1/2} \nabla_\Gamma (U_\ell-u_0 -
  \slp\Phi_\ell )}{L^2(E)}
\end{align*}
for $E\in\EE_\ell^\Gamma$.
Again we use the triangle inequality and estimate $\rho_{\ell+1}^{(1)}(E)$ by
\begin{align*}
  \rho_{\ell+1}^{(1)}(E)
  \leq &\norm{h_{\ell+1}^{1/2}(\phi_0 +
  (\dlp^\dagger - \tfrac12)\Phi_\ell - \A\nabla U_\ell \cdot \NV)}{L^2(E)} \\
  &+ \norm{h_{\ell+1}^{1/2} \dlp^\dagger (\Phi_{\ell+1}-\Phi_\ell)}{L^2(E)} \\
  &+ \frac12\norm{h_{\ell+1}^{1/2} (\Phi_{\ell+1}-\Phi_\ell)}{L^2(E)} \\
  &+ \norm{h_{\ell+1}^{1/2} (\A\nabla U_{\ell+1}- \A\nabla
  U_\ell)\cdot\NV}{L^2(E)}
\end{align*}
for $E\in\EE_{\ell+1}^\Gamma$.
Summing $\rho_{\ell+1}^{(1)}(E)^2$ over all boundary faces and applying the Young inequality, we end up with
\begin{align}\label{eq:rho_boundedge5}
\begin{split}
  &\rho_{\ell+1}^{(1)}(\EE_{\ell+1}^\Gamma)^2 \\
  &\leq (1+\delta)
  \sum_{E\in\EE_{\ell+1}^\Gamma} \norm{h_{\ell+1}^{1/2} (\phi_0-(\dlp^\dagger -
  \tfrac12)\Phi_\ell - \A\nabla U_\ell \cdot \NV)}{L^2(E)}^2 \\
  &+3(1+\delta^{-1}) \sum_{E\in\EE_{\ell+1}^\Gamma} \norm{h_{\ell+1}^{1/2}
  \dlp^\dagger (\Phi_{\ell+1}-\Phi_\ell)}{L^2(E)}^2 \\
  &+3(1+\delta^{-1}) \sum_{E\in\EE_{\ell+1}^\Gamma}
  \frac14\norm{h_{\ell+1}^{1/2} (\Phi_{\ell+1}-\Phi_\ell)}{L^2(E)}^2 \\
  &+3(1+\delta^{-1}) \sum_{E\in\EE_{\ell+1}^\Gamma}
  \norm{h_{\ell+1}^{1/2} (\A\nabla U_{\ell+1}- \A\nabla
  U_\ell)\cdot\NV}{L^2(E)}^2,
\end{split}
\end{align}
where the factor $3$ stems from the inequality $(\sum_{i=1}^n x_i)^2 \leq n
\sum_{i=1}^n x_i^2$. With the help of~\eqref{eq:mred_E}, we argue similarly as
before and estimate the first
sum in~\eqref{eq:rho_boundedge5} by
\begin{align*}
  &\sum_{E\in\EE_{\ell+1}^\Gamma} \norm{h_{\ell+1}^{1/2} (\phi_0-(\dlp^\dagger -
  \tfrac12)\Phi_\ell - \A\nabla U_\ell \cdot \NV)}{L^2(E)}^2 \\
  &\quad\leq \sum_{E\in\EE_\ell^\Gamma \cap \EE_{\ell+1}^\Gamma}
  \norm{h_\ell^{1/2} (\phi_0-(\dlp^\dagger -
  \tfrac12)\Phi_\ell - \A\nabla U_\ell \cdot \NV)}{L^2(E)}^2 \\
  &\quad \quad\quad\quad+ q^{1/(d-1)}\rho_\ell^{(1)}(\EE_\ell^\Gamma \backslash \EE_{\ell+1}^\Gamma)^2\\
  &\quad= \rho_\ell^{(1)}(\EE_\ell^\Gamma)^2 - (1-q^{1/(d-1)})
  \rho_\ell^{(1)}(\EE_\ell^\Gamma \backslash \EE_{\ell+1}^\Gamma)^2.
\end{align*}
An inverse-type estimate
from Lemma~\ref{lemma:invest} can be applied to the second sum of~\eqref{eq:rho_boundedge5}.
This yields
\begin{align*}
  \sum_{E\in\EE_{\ell+1}^\Gamma} \norm{h_{\ell+1}^{1/2} &
  \dlp^\dagger (\Phi_{\ell+1}-\Phi_\ell)}{L^2(E)}^2 \\
  &= \norm{h_{\ell+1}^{1/2}
  \dlp^\dagger (\Phi_{\ell+1}-\Phi_\ell)}{L^2(\Gamma)}^2 \\
  &\lesssim \norm{\Phi_{\ell+1}-\Phi_\ell}{H^{-1/2}(\Gamma)}^2.
\end{align*}
Here, the hidden constant depends only on $\Gamma$ and the $\gamma$-shape
regularity of $\EE_{\ell+1}^\Gamma$.
For the third sum, we use an inverse estimate
from~\cite[Theorem~3.6]{ghs} to see
\begin{align*}
  \sum_{E\in\EE_{\ell+1}^\Gamma}\norm{h_{\ell+1}^{1/2}
  (\Phi_{\ell+1}-\Phi_\ell)}{L^2(E)}^2
  &= \norm{h_{\ell+1}^{1/2} (\Phi_{\ell+1}-\Phi_\ell)}{L^2(\Gamma)}^2
  \\
  &\lesssim \norm{\Phi_{\ell+1}-\Phi_\ell}{H^{-1/2}(\Gamma)}^2,
\end{align*}
where, as before, the hidden constant depends only on $\Gamma$ and the
$\gamma$-shape regularity of $\EE_{\ell+1}^\Gamma$.
For the last term in~\eqref{eq:rho_boundedge5}, we again use
a scaling argument and infer
\begin{align*}
  \sum_{E\in\EE_{\ell+1}^\Gamma}
  \norm{h_{\ell+1}^{1/2}& (\A\nabla U_{\ell+1}- \A\nabla
  U_\ell)\cdot\NV}{L^2(E)}^2 \\
  &\lesssim \norm{\nabla
  (U_{\ell+1}-U_\ell)}{L^2(\Omega)}^2,
\end{align*}
where the hidden constant depends only on the Lip\-schitz constant of $\A$ and the
$\gamma$-shape regularity of $\TT_{\ell+1}$.
Altogether, we see
\begin{align}\label{eq:rho_boundedge10}
\begin{split}
  &\rho_{\ell+1}^{(1)}(\EE_{\ell+1}^\Gamma)^2 \\
  &\leq (1+\delta)
  \big(\rho_\ell^{(1)}(\EE_\ell^\Gamma)^2 - (1-q^{1/(d-1)})
  \rho_\ell^{(1)}(\EE_\ell^\Gamma \backslash \EE_{\ell+1}^\Gamma)^2\big) \\
  &\quad+(1+\delta^{-1}) C \big(\norm{\nabla(U_{\ell+1}-U_\ell)}{L^2(\Omega)}^2 \\
  &\quad\hspace{3cm}+
  \norm{\Phi_{\ell+1}-\Phi_\ell}{H^{-1/2}(\Gamma)}^2\big)
\end{split}
\end{align}
It remains to estimate the second boundary contribution $\rho_{\ell+1}^2$.
Arguing as before, we get
\begin{align}\label{eq:rho_boundedge11}
\begin{split}
  &\rho_{\ell+1}^{(2)}(\EE_{\ell+1}^\Gamma)^2 \\
  &\leq (1+\delta)
  \sum_{E\in\EE_{\ell+1}^\Gamma} \norm{h_{\ell+1}^{1/2} \nabla_\Gamma (U_\ell-
  u_0 - \slp\Phi_\ell )}{L^2(E)}^2 \\
  & \;\,+ 2(1+\delta^{-1}) \sum_{E\in\EE_{\ell+1}^\Gamma} \norm{h_{\ell+1}^{1/2}
  \nabla_\Gamma (U_{\ell+1} -U_\ell )}{L^2(E)}^2 \\
  & \;\,+ 2(1+\delta^{-1}) \sum_{E\in\EE_{\ell+1}^\Gamma} \norm{h_{\ell+1}^{1/2}
  \nabla_\Gamma \slp(\Phi_{\ell+1} -\Phi_\ell )}{L^2(E)}^2.
\end{split}
\end{align}
We use~\eqref{eq:mred_E} and estimate the first sum in
\eqref{eq:rho_boundedge11} by
\begin{align}\label{eq:rho_boundedge12}
\begin{split}
 & \sum_{E\in\EE_{\ell+1}^\Gamma} \norm{h_{\ell+1}^{1/2} \nabla_\Gamma (U_\ell-
  u_0 - \slp\Phi_\ell )}{L^2(E)}^2 \\
  &\quad\leq \sum_{E\in\EE_\ell^\Gamma\cap
  \EE_{\ell+1}^\Gamma} \norm{h_\ell^{1/2} \nabla_\Gamma (U_\ell-
  u_0 - \slp\Phi_\ell )}{L^2(E)}^2 \\
  &\quad\quad\quad\quad+q^{1/(d-1)} \rho_\ell^{(2)}(\EE_\ell^\Gamma \backslash \EE_{\ell+1}^\Gamma)^2\\
  &\quad= \rho_\ell^{(2)}(\EE_\ell^\Gamma)^2 - (1-q^{1/(d-1)})
  \rho_\ell^{(2)}(\EE_\ell^\Gamma \backslash \EE_{\ell+1}^\Gamma)^2.
\end{split}
\end{align}
For the second sum in~\eqref{eq:rho_boundedge11}, an inverse estimate
from~\cite[Proposition~3]{akp} can be applied, which yields
\begin{align}\label{eq:rho_boundedge13}
\begin{split}
  \sum_{E\in\EE_{\ell+1}^\Gamma}& \norm{h_{\ell+1}^{1/2}
  \nabla_\Gamma (U_{\ell+1} -U_\ell )}{L^2(E)}^2 \\
  &= \norm{h_{\ell+1}^{1/2} \nabla_\Gamma (U_{\ell+1} -
  U_\ell )}{L^2(\Gamma)}^2\\
  & \lesssim \norm{U_{\ell+1}-U_\ell}{H^{1/2}(\Gamma)}^2.
\end{split}
\end{align}
Here, the hidden constant depends only on $\Gamma$ and the $\gamma$-shape
regularity of $\EE_{\ell+1}^\Gamma$.
Finally, the inverse estimate for $\slp$ of Lemma~\ref{lemma:invest} proves
\begin{align}\label{eq:rho_boundedge14}
\begin{split}
  \sum_{E\in\EE_{\ell+1}^\Gamma}& \norm{h_{\ell+1}^{1/2}
  \nabla_\Gamma \slp(\Phi_{\ell+1} -\Phi_\ell )}{L^2(E)}^2 \\
  &= \norm{h_{\ell+1}^{1/2}\nabla_\Gamma \slp(\Phi_{\ell+1} -\Phi_\ell
  )}{L^2(\Gamma)}^2\\
  & \lesssim \norm{\Phi_{\ell+1}-\Phi_\ell}{H^{-1/2}(\Gamma)}^2
\end{split}
\end{align}
for the third sum in~\eqref{eq:rho_boundedge11}. Again, the hidden constant
depends only on $\Gamma$ and the $\gamma$-shape regularity of
$\EE_{\ell+1}^\Gamma$.
Combining~\eqref{eq:rho_boundedge11}, \eqref{eq:rho_boundedge12},
\eqref{eq:rho_boundedge13}, and \eqref{eq:rho_boundedge14} gives
\begin{align}\label{eq:rho_boundedge15}
\begin{split}
  &\rho_{\ell+1}^{(2)}(\EE_{\ell+1}^\Gamma)^2 \\
  &\leq (1+\delta)
  \big(\rho_\ell^{(2)}(\EE_\ell^\Gamma)^2 - (1-q^{1/(d-1)})
  \rho_\ell^{(2)}(\EE_\ell^\Gamma \backslash \EE_{\ell+1}^\Gamma)^2\big)\\
  &\quad\quad+(1+\delta^{-1}) C \big(\norm{U_{\ell+1}-U_\ell}{H^{1/2}(\Gamma)}^2 \\
  &\hspace{4cm} + \norm{\Phi_{\ell+1}-\Phi_\ell}{H^{-1/2}(\Gamma)}^2\big)
\end{split}
\end{align}

\textbf{Step~4. }
We combine the estimates~\eqref{eq:rho_vol}, \eqref{eq:rho_intedge3},
\eqref{eq:rho_boundedge10}, and \eqref{eq:rho_boundedge15}
for the different contributions of the estimator.
This results in
\begin{align*}
\begin{split}
  \rho_{\ell+1}^2 &= \rho_{\ell+1}(\TT_{\ell+1})^2 +
  \rho_{\ell+1}(\EE_{\ell+1}^\Omega)^2 +
  \rho_{\ell+1}(\EE_{\ell+1}^\Gamma)^2 \\
  &\leq (1+\delta)\Big(\rho_\ell(\TT_\ell)^2 + \rho_\ell(\EE_\ell^\Omega)^2 +
  \rho_\ell(\EE_\ell^\Gamma)^2 \\
  &\hspace{1cm} - (1-q^{1/(d-1)}) \big(\rho_\ell(\TT_\ell\backslash\TT_{\ell+1})^2
  \\&\hspace{2cm}+ \rho_\ell(\EE_\ell^\Omega\backslash\EE_{\ell+1}^\Omega)^2
  + \rho_\ell(\EE_\ell^\Gamma\backslash\EE_{\ell+1}^\Gamma)^2\big) \Big) \\
  &\quad\;+ (1+\delta^{-1}) C' \big( \norm{\nabla(U_{\ell+1} -U_\ell)}{L^2(\Omega)}^2 \\
  &\hspace{0.8cm}+
  \norm{U_{\ell+1}-U_\ell}{H^{1/2}(\Gamma)}^2 +
  \norm{\Phi_{\ell+1}-\Phi_\ell}{H^{-1/2}(\Gamma)}^2\big),
\end{split}
\end{align*}
where we have used $q^{2/d}\leq q^{1/(d-1)}$, since $q<1$. We define $\II_\ell :=
\TT_\ell \cup \EE_\ell^\Omega \cup \EE_\ell^\Gamma$.
Then, the norm equivalence
\begin{align*}
&\norm{\nabla v}{L^2(\Omega)}^2 +
\norm{v}{H^{1/2}(\Gamma)}^2 \simeq \norm{v}{H^1(\Omega)}^2 \text{ for } v\in
H^1(\Omega)
\end{align*}
yields
\begin{align}\label{eq:rho_est}
\begin{split}
  \rho_{\ell+1}^2 &\leq (1+\delta)\big(\rho_\ell^2 - (1-q^{1/(d-1)})
  \rho_\ell(\II_\ell\backslash \II_{\ell+1})^2\big) \\
  &\hspace{1cm}+ (1+\delta^{-1}) C
  \nnorm{\UU_{\ell+1}-\UU_\ell}^2.
\end{split}
\end{align}
The constant $C>0$ depends on $\Omega$, the $\gamma$-shape regularity of
$\TT_{\ell+1}$ and $\EE_{\ell+1}^\Gamma$,
and the
Lipschitz constant of~$\A$.

\textbf{Step~5. } Recall the \Doerfler~marking
\begin{align*}
  \theta\rho_\ell^2 \leq \rho_\ell(\MM_\ell)^2 \leq \rho_\ell(\II_\ell
  \backslash\II_{\ell+1})^2,
\end{align*}
where $\MM_\ell \subseteq \II_\ell\backslash\II_{\ell+1}$ denotes the set of
marked elements.
Incorporating the last inequality in~\eqref{eq:rho_est} gives
\begin{align}\label{eq:rho_est2}
\begin{split}
  \rho_{\ell+1}^2 &\leq (1+\delta) (\rho_\ell^2 -(1-q^{1/(d-1)})\theta\rho_\ell^2)
  \\ &\hspace{1cm}+(1+\delta^{-1}) C \nnorm{\UU_{\ell+1}-\UU_\ell}^2 \\
  &= (1+\delta)(1-\theta (1-q^{1/(d-1)}))\rho_\ell^2
  \\ &\hspace{1cm}+(1+\delta^{-1}) C \nnorm{\UU_{\ell+1}-\UU_\ell}^2.
\end{split}
\end{align}
Since $0<q^{1/(d-1)}<1$ and $0<\theta <1$, we have \ $0<1-\theta(1-q^{1/(d-1)}) < 1$.
Choosing $\delta>0$ sufficiently small, we get $0 < \kappa:=
(1+\delta)(1-\theta(1-q^{1/(d-1)})) < 1$, and Estimate~\eqref{eq:rho_est2}
becomes
\begin{align*}
  \rho_{\ell+1}^2 &\leq \kappa \rho_\ell^2 + \c{reduction}
  \nnorm{\UU_{\ell+1}-\UU_\ell}^2,
\end{align*}
with $\c{reduction} = (1+\delta^{-1}) C$.

\textbf{Step~6. } Recall that quasi-optimality~\eqref{eq:cea} implies that \\
\mbox{$\lim_{\ell\to\infty} \UU_\ell \in \HH$} exists and thus $\nnorm{\UU_{\ell+1}-\UU_\ell} \rightarrow 0$ as
$\ell \rightarrow \infty$. Together with the estimator
reduction~\eqref{eq:estredest}, elementary calculus predicts convergence
$\rho_\ell \rightarrow 0$ as $\ell \rightarrow \infty$,
cf.~\cite[Section~2]{estconv}.
The reliability $\nnorm{\uu-\UU_\ell}\lesssim\rho_\ell$ of Theorem~\ref{thm:bmc_est} then proves $\UU_\ell \rightarrow
\uu$ for $\ell \rightarrow \infty$.
\qed
\end{proof}

\section{Numerical experiments}\label{sec:numerics}
In this section, we present some numerical 2D experiments, where we compare the
three different FEM-BEM coupling methods on uniform and adaptively generated meshes.
In particular, we emphasize the advantages of adaptive mesh-refinement compared with
uniform refinement.

Firstly, we investigate a linear problem with $\A=\Id$ on an L-shaped domain.
In the second and third experiment, we choose a linear operator $\A$ with $\A
\nabla u = (\c{ellA} \tfrac{\partial u}{\partial x}, \tfrac{\partial u}{\partial
y})$ and ellipticity constant $\c{ellA}>0$. We underline numerically that the assumption $\c{ellA} > 1/4$ in Theorem~\ref{thm:bmc_ell} and Theorem~\ref{thm:bmc_ell_jn} for the unique solvability of the Bielak-MacCamy and Johnson-N\'ed\'elec coupling is sufficient, but \emph{not} necessary.
Finally, we deal with a nonlinear problem on a Z-shaped domain. Throughout, we consider lowest-order elements, i.e.\ $\HH_\ell = \XX_\ell\times\YY_\ell$ with $\XX_\ell = \SS^1(\TT_\ell)$ and $\YY_\ell = \PP^0(\EE_\ell^\Gamma)$.

Let $\zeta_\ell$ be a placeholder for any
of the presented error estimators of Theorem~\ref{thm:bmc_est},~\ref{thm:jn_est}, or~\ref{thm:sym_est}, i.e.\ $\zeta_\ell \in \{ \rho_\ell, \eta_\ell, \mu_\ell \}$. The error estimator $\zeta_\ell$ is split into volume and boundary contributions:
\begin{align}
  \zeta_\ell^2 &= \!\sum_{\tau \in \TT_\ell \cup \EE_\ell^\Omega}\!
  \zeta_\ell(\tau)^2 + \!\sum_{\tau \in \EE_\ell^\Gamma}\! \zeta_\ell(\tau)^2
  =:(\zeta_{\ell}^\Omega)^2 + (\zeta_{\ell}^\Gamma)^2.
\end{align}
Recall that the variable $\phi\in H^{-1/2}(\Gamma)$ has different meanings in the three coupling
methods: In the John\-son-N\'ed\'elec and symmetric coupling $\phi=\nabla u^{\rm
ext}\cdot\NV$ stands for the normal derivative of the exterior solution, whereas
$\phi$ is just a density with $u^{\rm ext} = \widetilde \slp\phi$ in the
Bielak-MacCamy coupling. Quasi-optimality~\eqref{eq:cea} of the coupling methods implies
\begin{align}
\begin{split}
  \nnorm{\uu-\UU_\ell} &\lesssim \norm{u-U_\ell}{H^1(\Omega)} \\
  &\hspace{1cm} + \min\limits_{\Psi_\ell \in \PP^0(\EE_\ell^\Gamma)}
  \norm{\phi-\Psi_\ell}{H^{-1/2}(\Gamma)} \\
  &=: \err_{\ell}^\Omega + \err_{\ell}^\Gamma =: \err_\ell.
\end{split}
\end{align}
Since the variable $\phi$ is not comparable between the different coupling strategies, we only consider the volume terms $\zeta_{\ell}^\Omega$ and $\err_{\ell}^\Omega$ for comparison in the following experiments. Anyhow, we stress that one may expect that the finite element contribution dominates the overall convergence rate.
Optimality of $\err_{\ell}^\Gamma$ and the corresponding estimator contribution $\zeta_{\ell}^\Gamma$ is numerically investigated in~\cite{aposterjn} for the Johnson-N\'ed\'elec coupling and some linear Laplacian in 2D.

In the following, we plot the quantities $\zeta_{\ell}^\Omega$ and
$\err_{\ell}^\Omega$ versus
the number of elements $N = \# \TT_\ell$, where the sequence of meshes
$\TT_\ell$ is obtained with Algorithm~\ref{alg:adaptive}.
We consider adaptive mesh-refinement with adaptivity parameter $\theta = 0.25$
and uniform refinement, which corresponds to the case $\theta=1$.
For all quantities we observe decay rates proportional to $N^{-\alpha}$,
for some $\alpha>0$. We recall that a convergence rate of $\alpha = 1/2$
is optimal for the overall error with P1-FEM.

Moreover, we plot the error quantities $\err_{\ell}^\Omega$ resp.
$\zeta_\ell^\Omega$ versus the computing time $t_\ell$.
The time measurement is different for the uniform and adaptive mesh-refinement:
In the uniform case $t_\ell$, consists of the time which is used to refine the
initial triangulation $\ell$-times, plus the time which is needed to build and solve the
Galerkin system.
In the adaptive case, we set $t_{-1}:=0$. Then, $t_\ell$ consists of the time
$t_{\ell-1}$ needed for all prior steps in the adaptive algorithm, plus the time
needed for one adaptive step on the $\ell$-th mesh, i.e. steps \textit{(i)--(iv)} in
Algorithm~\ref{alg:adaptive}.

All computations were performed on a 64-Bit Linux work station with 32GB of
RAM in \textsc{Matlab} (Release 2009b).
The computation of the boundary integral operators is done with the
\textsc{Matlab} BEM-library \verb+HILBERT+, cf.~\cite{hilbert}.
Throughout, the discrete coupling equations are assembled in the \textsc{Matlab} \texttt{sparse}-format and solved with the \textsc{Matlab} backslash operator.

\begin{figure}[htbp]
\begin{center}
  \includegraphics[width=\columnwidth]{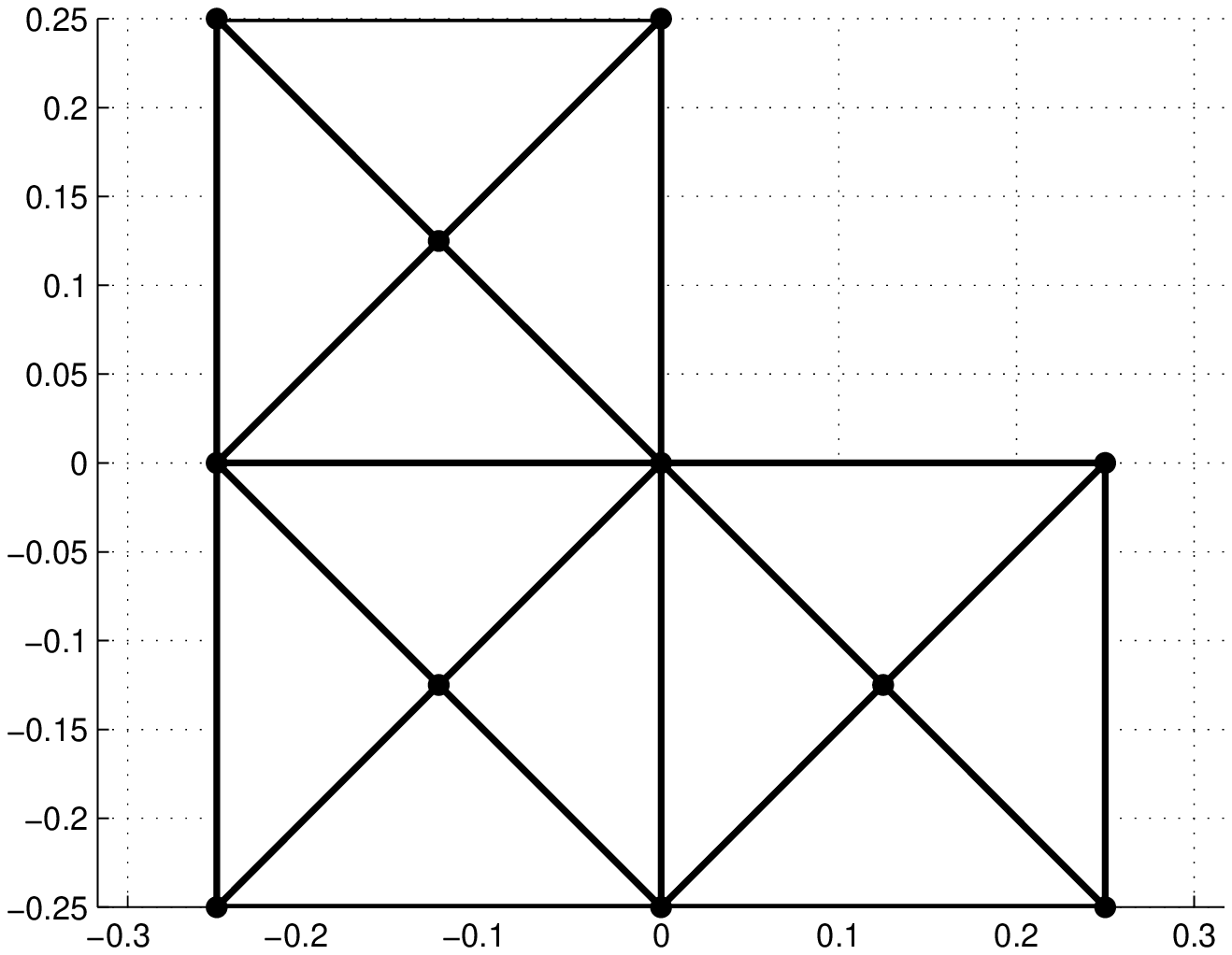}
  \caption{L-shaped domain as well as initial triangulation $\TT_0$ with $\#\TT_0=12$ triangles and initial boundary mesh $\EE_0^\Gamma$ with $\#\EE_0^\Gamma=8$ boundary elements.}
  \label{fig:L_shape}
\end{center}
\end{figure}

\begin{figure}[htbp]
\begin{center}
  \psfrag{error}[c][c]{error contribution $\err_\ell^\Omega$}
  \psfrag{nE}[c][c]{number of elements~$N$}
  \psfrag{convgraphResidual}[c][c]{}
  \psfrag{errbmcadap}{\tiny BMC, adap.}
  \psfrag{errbmcunif}{\tiny BMC, unif.}
  \psfrag{errjnadap}{\tiny JN, adap.}
  \psfrag{errjnunif}{\tiny JN, unif.}
  \psfrag{errsymadap}{\tiny sym, adap.}
  \psfrag{errsymunif}{\tiny sym, unif.}

  \psfrag{OON12}[c][c][1][-25]{\tiny $\OO(N^{-1/2})$}
  \psfrag{OON13}[c][c][1][-17]{\tiny $\OO(N^{-1/3})$}

  \includegraphics[width=\columnwidth]{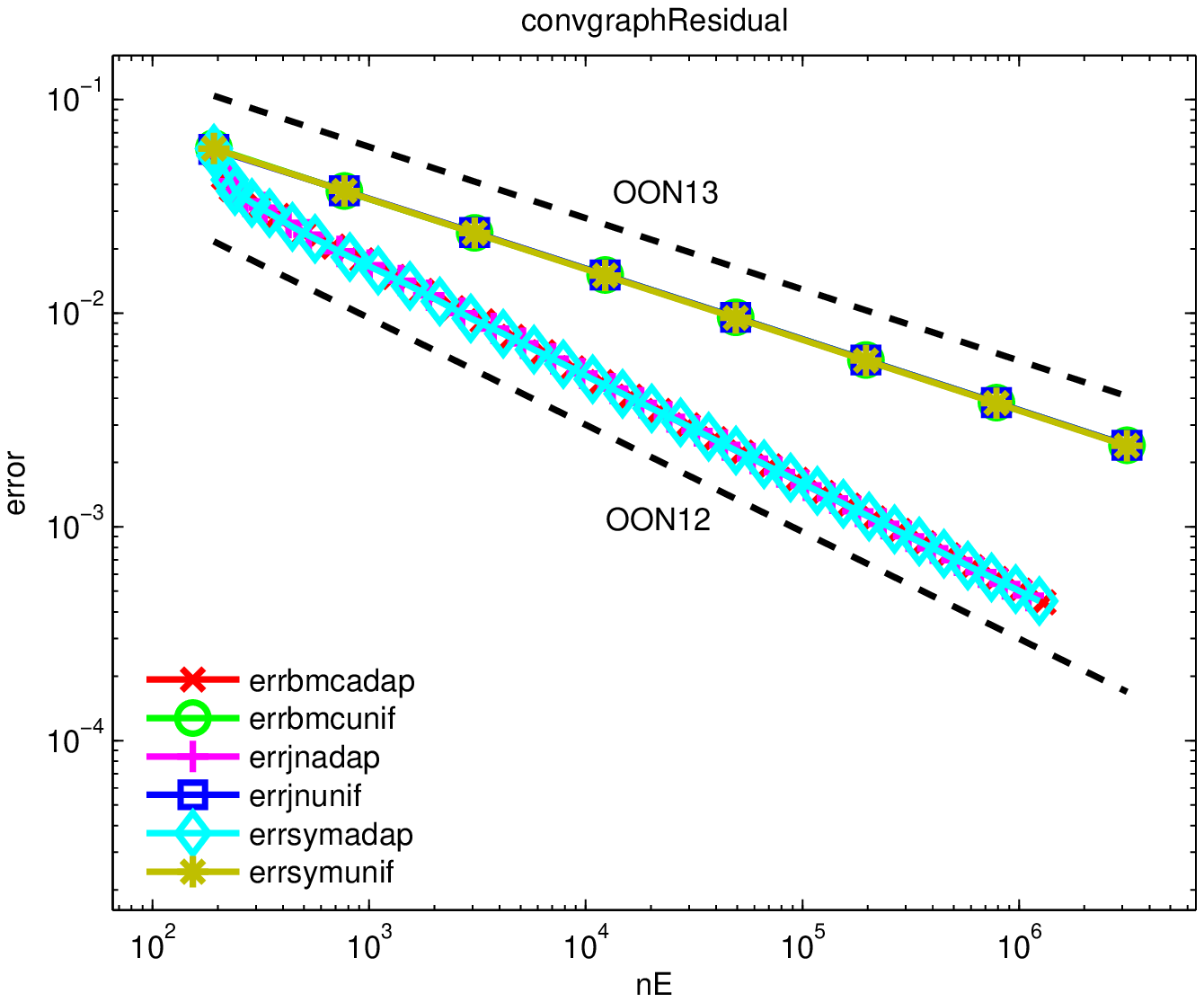}
  \caption{Volume error $\err_\ell^\Omega$ versus number of elements $N$ for Laplace problem of Section~\ref{sec:exp_1}.}
  \label{fig:lin_conv_rates_error}
\end{center}	
\end{figure}

\begin{figure}[htbp]
\begin{center}
  \psfrag{error}[c][c]{estimator contribution $\zeta_\ell^\Omega$}
  \psfrag{nE}[c][c]{number of elements~$N$}
  \psfrag{convgraphResidual}[c][c]{}
  \psfrag{estbmcadap}{\tiny BMC, adap.}
  \psfrag{estbmcunif}{\tiny BMC, unif.}
  \psfrag{estjnadap}{\tiny JN, adap.}
  \psfrag{estjnunif}{\tiny JN, unif.}
  \psfrag{estsymadap}{\tiny sym, adap.}
  \psfrag{estsymunif}{\tiny sym, unif.}

  \psfrag{OON12}[c][c][1][-27]{\tiny $\OO(N^{-1/2})$}
  \psfrag{OON13}[c][c][1][-17]{\tiny $\OO(N^{-1/3})$}

  \includegraphics[width=\columnwidth]{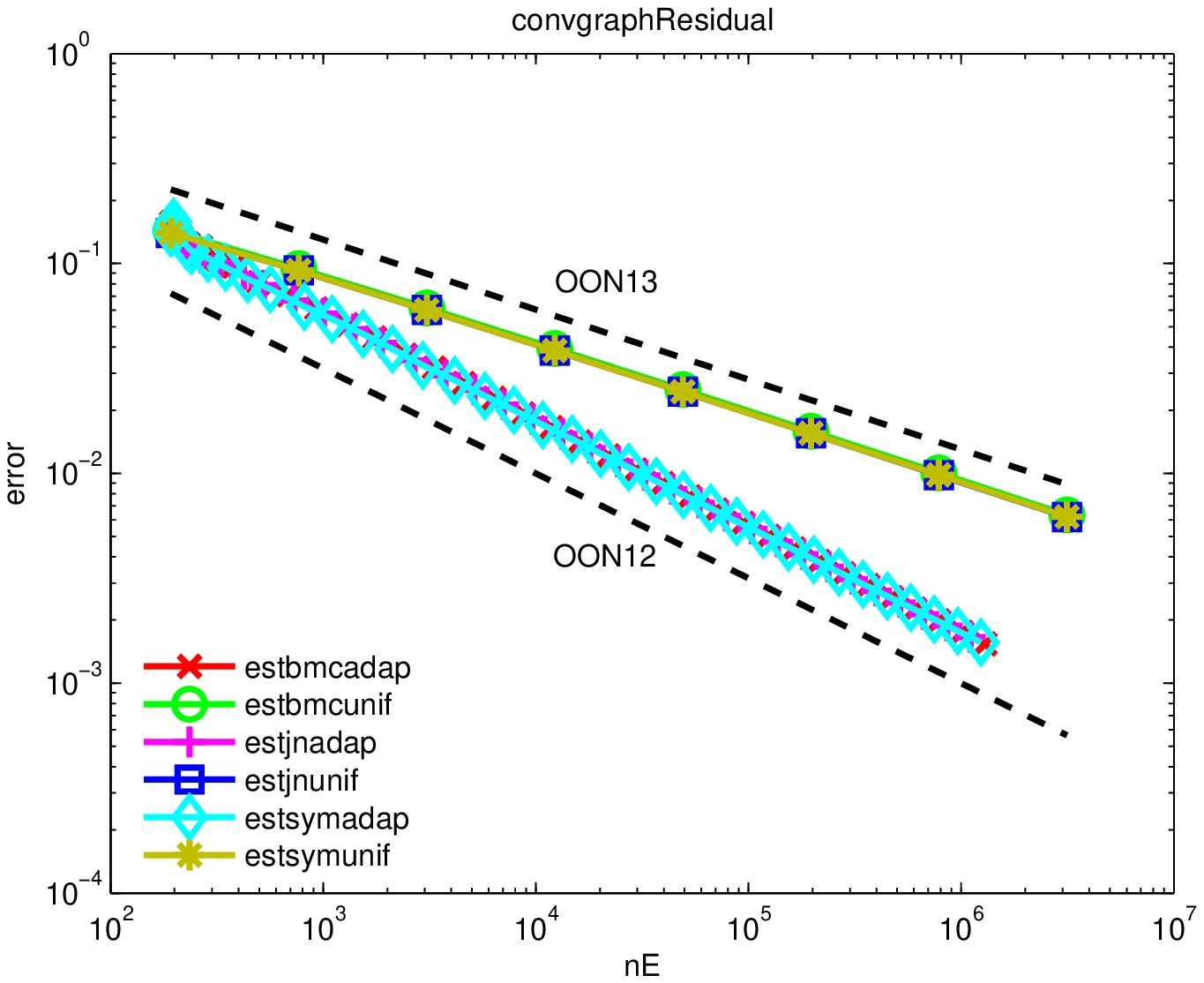}
  \caption{Volume error estimator $\zeta_\ell^\Omega$ versus number of elements
  $N$ for Laplace problem of Section~\ref{sec:exp_1}.}
  \label{fig:lin_conv_rates_est}
\end{center}	
\end{figure}

\begin{figure}[htbp]
\begin{center}
  \psfrag{error}[c][c]{efficiency index $\zeta_\ell^\Omega / \err_\ell^\Omega$}
  \psfrag{nE}[c][c]{number of elements~$N$}
  \psfrag{convgraphResidual}[c][c]{}
  \psfrag{quotbmcadap}{\tiny BMC, adap.}
  \psfrag{quotbmcunif}{\tiny BMC, unif.}
  \psfrag{quotjnadap}{\tiny JN, adap.}
  \psfrag{quotjnunif}{\tiny JN, unif.}
  \psfrag{quotsymadap}{\tiny sym, adap.}
  \psfrag{quotsymunif}{\tiny sym, unif.}

  \includegraphics[width=\columnwidth]{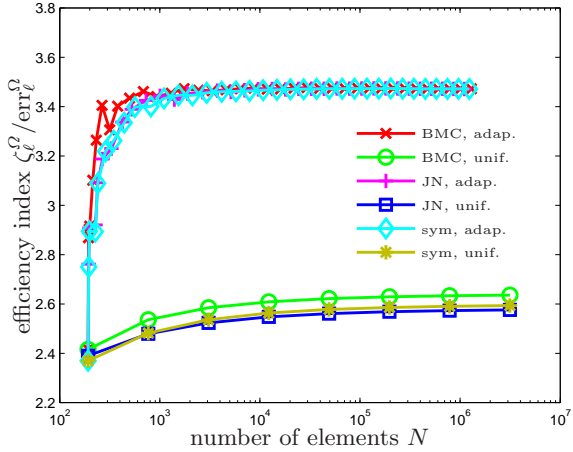}
  \caption{Efficiency index $\zeta_\ell^\Omega / \err_\ell^\Omega$
  versus number of elements $N$ for Laplace problem of Section~\ref{sec:exp_1}. The $x$-axis is scaled logarithmically.}
  \label{fig:lin_quotients}
\end{center}	
\end{figure}

\begin{figure}[htbp]
\begin{center}
  \psfrag{error}[c][c]{error contribution $\err_\ell^\Omega$}
  \psfrag{tinsec}[c][c]{time $t$ [sec.]}
  \psfrag{compTime}[c][c]{}
  \psfrag{errbmcadap}{\tiny BMC, adap.}
  \psfrag{errbmcunif}{\tiny BMC, unif.}
  \psfrag{errjnadap}{\tiny JN, adap.}
  \psfrag{errjnunif}{\tiny JN, unif. }
  \psfrag{errsymadap}{\tiny sym, adap.}
  \psfrag{errsymunif}{\tiny sym, unif. }

  \includegraphics[width=\columnwidth]{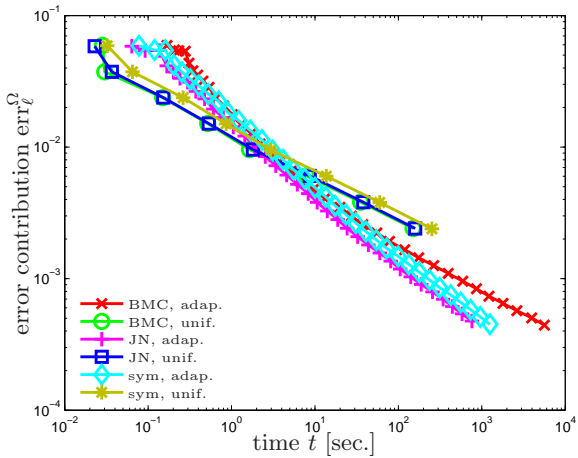}
  \caption{Comparison of the error versus the computing time for
  adaptive and uniform refinement strategy for Laplace problem of Section~\ref{sec:exp_1}.}
  \label{fig:lin_comptime}
\end{center}	
\end{figure}

\subsection{Laplace transmission problem on L-shaped domain}\label{sec:exp_1}

In this experiment, we consider the linear operator $\A = \Id$.
We prescribe the exact solution of~\eqref{eq:strongform} as
\begin{align}\label{eq:exact_u}
  u(x,y) &= r^{2/3} \sin(\tfrac23\varphi), \\
  \label{eq:exact_uext}
  u^{\rm ext}(x,y) &= \tfrac12 \log(|x+\tfrac18|^2 + |y+\tfrac18|^2)
\end{align}
on an L-shaped domain, visualized in Figure~\ref{fig:L_shape}.
Here, $(r,\varphi)$ denote polar coordinates. These functions are then used to
determine the data $(f, u_0, \phi_0)$. Note that $\Delta u = 0 = \Delta u^{\rm ext}$. We stress that $u$ has a generic singularity at the reentrant
corner. Therefore, uniform mesh-refinement leads to a suboptimal convergence
order $\alpha = 1/3$. However, adaptive mesh-refinement recovers the optimal
convergence rate $\alpha = 1/2$. In Figure~\ref{fig:lin_conv_rates_error}
resp.~\ref{fig:lin_conv_rates_est}, we
observe optimal rates for all error and error estimator quantities of the
different coupling methods corresponding to the adaptive scheme,
whereas all quantities corresponding to uniform mesh-refinement converge with
order $\alpha = 1/3$.

The quotients $\zeta_\ell^\Omega / \err_\ell^\Omega$, plotted in Figure~\ref{fig:lin_quotients},
indicate that the residual-based estimators of
Theorem~\ref{thm:bmc_est},~\ref{thm:jn_est} and~\ref{thm:sym_est}
are not only reliable but also efficient: We see that $\zeta_\ell^\Omega /
\err_\ell^\Omega$ is constant for sufficiently large $N$.

When we compare the computing time, see Figure~\ref{fig:lin_comptime}, we
observe that the adaptive strategy is superior to the uniform one.
Moreover, we see differences between the adaptive versions of the
three coupling methods. The Bielak-MacCamy coupling needs
significantly more computing time than the other coupling schemes.
We also observe that the Johnson-N\'ed\'elec coupling is the fastest
of all coupling methods, at least in this experiment.

\subsection{Linear transmission problem on L-shaped domain}\label{sec:exp_2}
We consider a linear problem with $\A = (\c{ellA} \tfrac{\partial u}{\partial
x}, \tfrac{\partial u}{\partial y})$ on an L-shaped domain.
We again prescribe the solutions $(u,u^{\rm ext})$
by~\eqref{eq:exact_u}--\eqref{eq:exact_uext}. Then, $\div \A\nabla
u = (\c{ellA}-1)\tfrac{\partial^2 u}{\partial x^2}$.
In Figure~\ref{fig:ell_A}, we plot the error quantities $\err_{\ell}^\Omega$ of the
adaptive schemes for different values of $\c{ellA}$.
We observe good performance of both the Bielak-MacCamy and Johnson-N\'ed\'elec
coupling also for $\c{ellA} \in (0,1/4]$, which was excluded by our analysis.

\begin{figure}[htbp]
\begin{center}
  \psfrag{error}[c][c]{error contribution $\err_\ell^\Omega$, adaptive}
  \psfrag{nE}[c][c]{number of elements~$N$}
  \psfrag{ellipticity}[c][c]{}

  \psfrag{errbmc001}{\tiny BMC, $\c{ellA} = 0.001$}
  \psfrag{errbmc25}{\tiny BMC, $\c{ellA} = 0.25$}
  \psfrag{errjn001}{\tiny JN, $\c{ellA} = 0.001$}
  \psfrag{errjn25}{\tiny JN, $\c{ellA} = 0.25$}
  \psfrag{errsym001}{\tiny sym, $\c{ellA} = 0.001$}
  \psfrag{errsym25}{\tiny sym, $\c{ellA} = 0.25$}

  \psfrag{OON12}[l][B][1][-20]{\tiny $\OO(N^{-1/2})$}

  \includegraphics[width=\columnwidth]{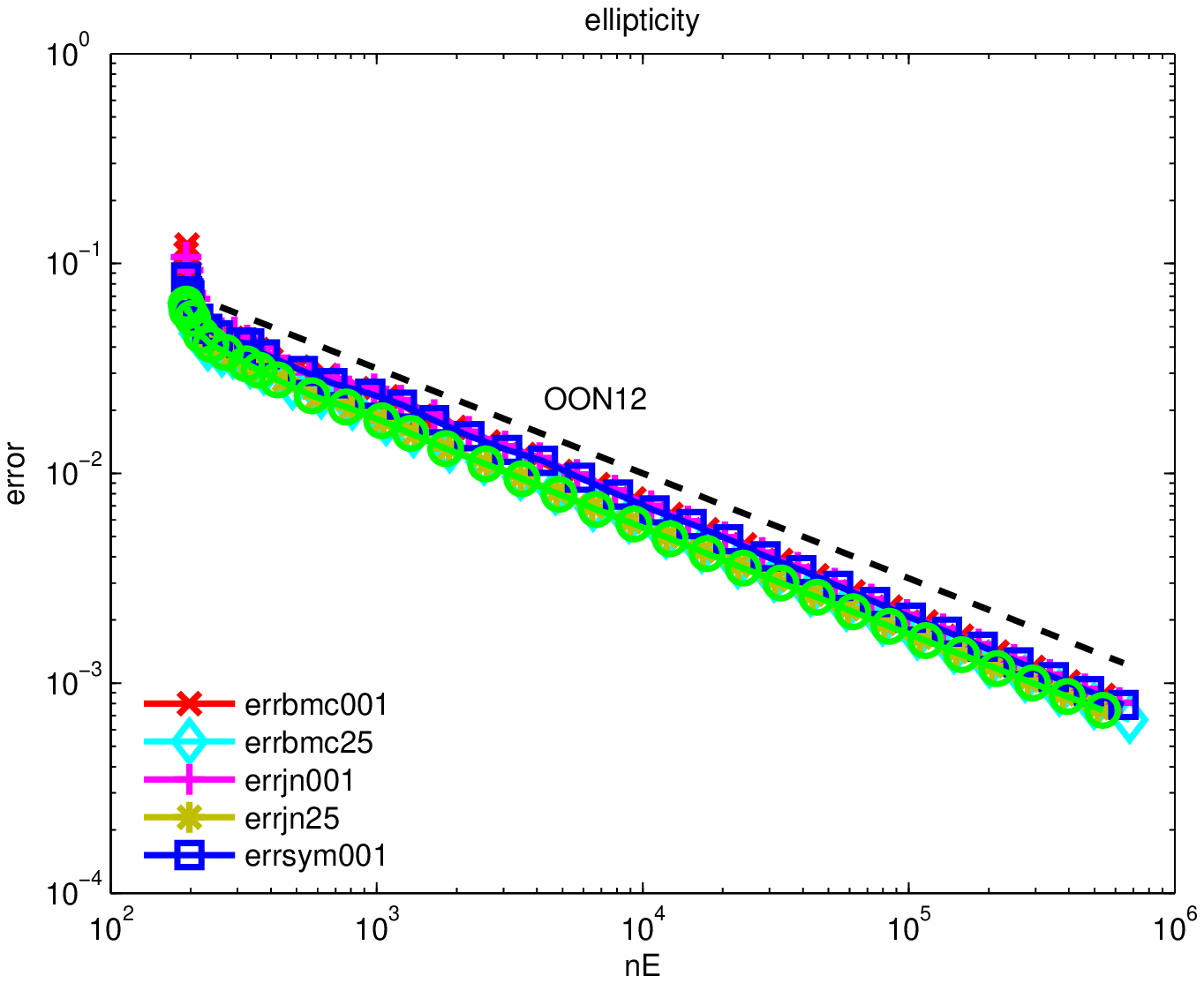}
  \caption{Convergence of the volume errors for different
  values of the ellipticity constant of the operator $\A$ of
  Section~\ref{sec:exp_2} and adaptive mesh-refinement.}
  \label{fig:ell_A}
\end{center}	
\end{figure}

\begin{figure}[htbp]
\begin{center}
  \psfrag{error}[c][c]{error contribution $\err_\ell^\Omega$, uniform}
  \psfrag{nE}[c][c]{number of elements~$N$}
  \psfrag{ellipticity}[c][c]{}

  \psfrag{errbmc001}{\tiny BMC, $\c{ellA} = 0.001$}
  \psfrag{errbmc25}{\tiny BMC, $\c{ellA} = 0.25$}
  \psfrag{errjn001}{\tiny JN, $\c{ellA} = 0.001$}
  \psfrag{errjn25}{\tiny JN, $\c{ellA} = 0.25$}
  \psfrag{errsym001}{\tiny sym, $\c{ellA} = 0.001$}
  \psfrag{errsym25}{\tiny sym, $\c{ellA} = 0.25$}

  \psfrag{OON13}[l][B][1][-19]{\tiny $\OO(N^{-1/3})$}

  \includegraphics[width=\columnwidth]{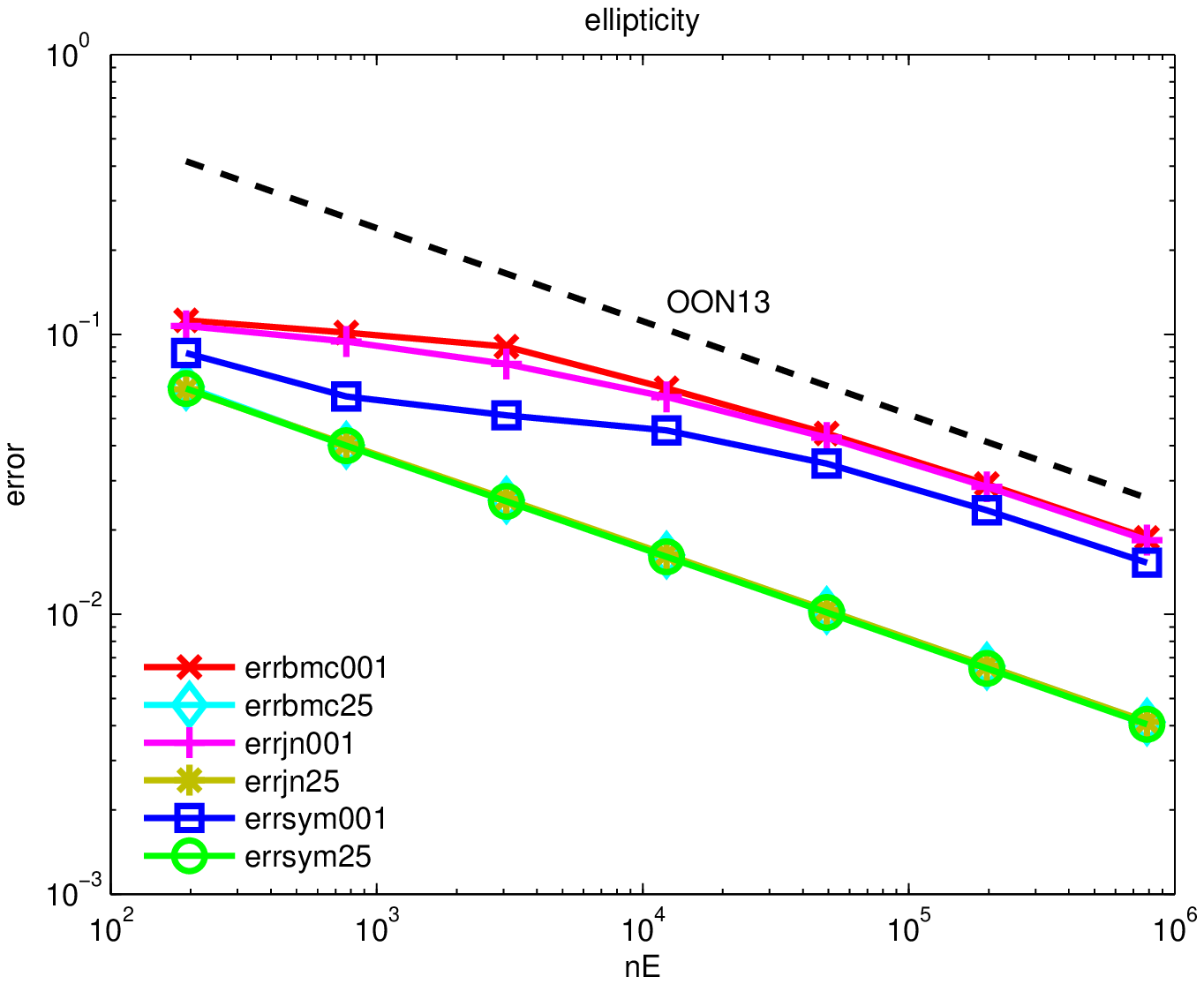}
  \caption{Convergence of the volume errors for different
  values of the ellipticity constant of the operator $\A$ of
  Section~\ref{sec:exp_2} and uniform mesh-refinement.}
  \label{fig:ell_A_unif}
\end{center}	
\end{figure}

\subsection{Linear transmission problem with unknown solution}\label{sec:exp_3}
\begin{figure}[htbp]
\begin{center}
  \includegraphics[width=\columnwidth]{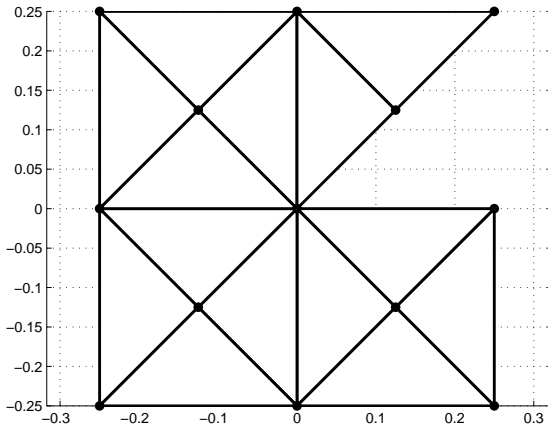}
  \caption{Z-shaped domain as well as initial triangulation $\TT_0$ with $\#
  \TT_0 = 14$ triangles and initial boundary mesh $\EE_0^\Gamma$ with $\#
  \EE_0^\Gamma = 10$ boundary elements.}
  \label{fig:Z_shape}
\end{center}
\end{figure}
We present another experiment to underline the results of the aforegoing
subsection. Again, we consider the operator $\A$ with $\A\nabla u = (\c{ellA}
\tfrac{\partial u}{\partial x}, \tfrac{\partial u}{\partial y})$, but now on a
Z-shaped domain, visualized in Figure~\ref{fig:Z_shape}.
The data is set to $(f, u_0, \phi_0) = (1, 0, 0)$, and we stress that the exact
solution is not known. Therefore, we plot only the error estimator quantities
$\zeta_\ell$ in Figure~\ref{fig:ell_A_2} resp.~\ref{fig:ell_A_2_unif}.
An optimal convergence order $\alpha = 1/2$ for the estimators corresponding to
the adaptive schemes
is observed, whereas uniform refinement methods lead to suboptimal convergence rates.
As in Section~\ref{sec:exp_2}, Figure~\ref{fig:ell_A_2}
resp.~\ref{fig:ell_A_2_unif} indicates a good performance of both the
Johnson-N\'ed\'elec and Bielak-MacCamy coupling for $\c{ellA} \in (0,1/4]$.

\begin{figure}[htbp]
\begin{center}
  \psfrag{error}[c][c]{error estimator $\zeta_\ell$, adaptive}
  \psfrag{nE}[c][c]{number of elements~$N$}
  \psfrag{ellipticity}[c][c]{}

  \psfrag{errbmc001}{\tiny BMC, $\c{ellA} = 0.001$}
  \psfrag{errbmc25}{\tiny BMC, $\c{ellA} = 0.25$}
  \psfrag{errjn001}{\tiny JN, $\c{ellA} = 0.001$}
  \psfrag{errjn25}{\tiny JN, $\c{ellA} = 0.25$}
  \psfrag{errsym001}{\tiny sym, $\c{ellA} = 0.001$}
  \psfrag{errsym25}{\tiny sym, $\c{ellA} = 0.25$}

  \psfrag{OON12}[l][B][1][-27]{\tiny $\OO(N^{-1/2})$}

  \includegraphics[width=\columnwidth]{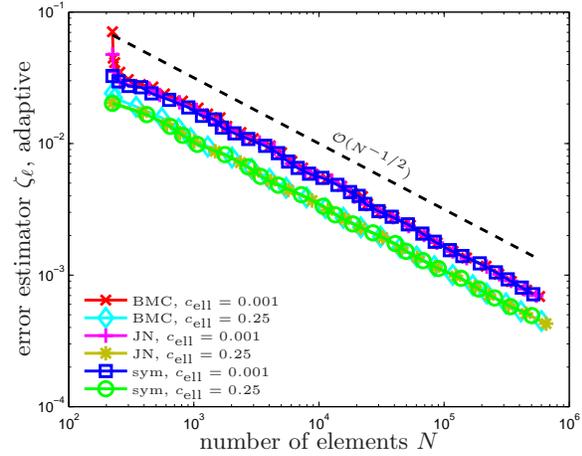}
  \caption{Convergence of the respective error estimators $\zeta_\ell$ for different
  values of the ellipticity constant of the operator $\A$ of
  Section~\ref{sec:exp_3} and adaptive mesh-refinement.}
  \label{fig:ell_A_2}
\end{center}	
\end{figure}

\begin{figure}[htbp]
\begin{center}
  \psfrag{error}[c][c]{error estimator $\zeta_\ell$, uniform}
  \psfrag{nE}[c][c]{number of elements~$N$}
  \psfrag{ellipticity}[c][c]{}

  \psfrag{errbmc001}{\tiny BMC, $\c{ellA} = 0.001$}
  \psfrag{errbmc25}{\tiny BMC, $\c{ellA} = 0.25$}
  \psfrag{errjn001}{\tiny JN, $\c{ellA} = 0.001$}
  \psfrag{errjn25}{\tiny JN, $\c{ellA} = 0.25$}
  \psfrag{errsym001}{\tiny sym, $\c{ellA} = 0.001$}
  \psfrag{errsym25}{\tiny sym, $\c{ellA} = 0.25$}

  \psfrag{OON12}[l][B][1][-22]{\tiny $\OO(N^{-1/2})$}
  \psfrag{OON13}[l][B][1][-15]{\tiny $\OO(N^{-1/3})$}

  \includegraphics[width=\columnwidth]{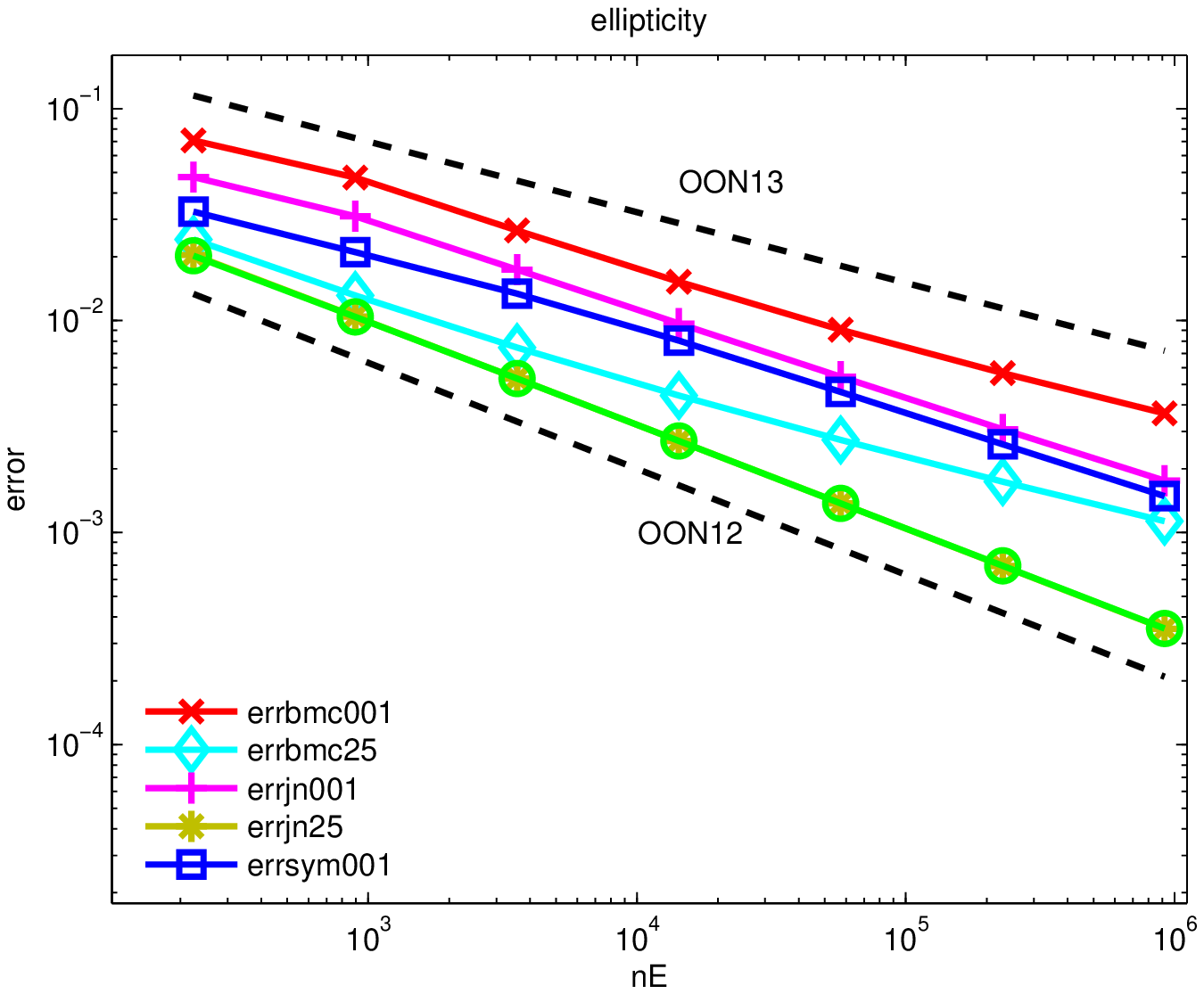}
  \caption{Convergence of the respective error estimators $\zeta_\ell$ for different
  values of the ellipticity constant of the operator $\A$ of
  Section~\ref{sec:exp_3} and uniform mesh-refinement.}
  \label{fig:ell_A_2_unif}
\end{center}	
\end{figure}

\subsection{Nonlinear experiment on Z-shaped domain}\label{sec:exp_nonlin}
In the last experiment we consider a Z-shaped domain, visualized in
Figure~\ref{fig:Z_shape}, and a nonlinear operator $\A$ with $\A\nabla u =
g(|\nabla u|) \nabla u$, where $g(t) = 2+1/(1+t)$ for $t\ge0$.
Note that the ellipticity constant of $\A$ is $\c{ellA} = 2$.
The prescribed solution

\begin{align}
  u(x,y) &= r^{4/7} \sin(\tfrac47\varphi), \\
  u^{\rm ext}(x,y) &= \tfrac{x+y+0.25}{(x+0.125)^2 + (y+0.125)^2}
\end{align}
of~\eqref{eq:strongform} fulfills $\Delta u^{\rm ext} = 0$. The interior solution
$u$ has a generic singularity at the reentrant corner.
As can be seen in Figure~\ref{fig:nonlin_conv_rates_error},
resp.~\ref{fig:nonlin_conv_rates_est}, the uniform strategy
leads to a suboptimal convergence rate $\alpha = 2/7$, whereas the adaptive
strategy leads to the optimal convergence rate $\alpha = 1/2$.

\begin{figure}[htbp]
\begin{center}
  \psfrag{error}[c][c]{error contribution $\err_\ell^\Omega$}
  \psfrag{nE}[c][c]{number of elements $N$}
  \psfrag{convgraphResidual}[c][c]{}
  \psfrag{errbmcadap}{\tiny BMC, adap.}
  \psfrag{errbmcunif}{\tiny BMC, unif.}
  \psfrag{errjnadap}{\tiny JN, adap.}
  \psfrag{errjnunif}{\tiny JN, unif.}
  \psfrag{errsymadap}{\tiny sym, adap.}
  \psfrag{errsymunif}{\tiny sym, unif.}

  \psfrag{OON12}[c][c][1][-27]{\tiny $\OO(N^{-1/2})$}
  \psfrag{OON27}[c][c][1][-15]{\tiny $\OO(N^{-2/7})$}

  \includegraphics[width=\columnwidth]{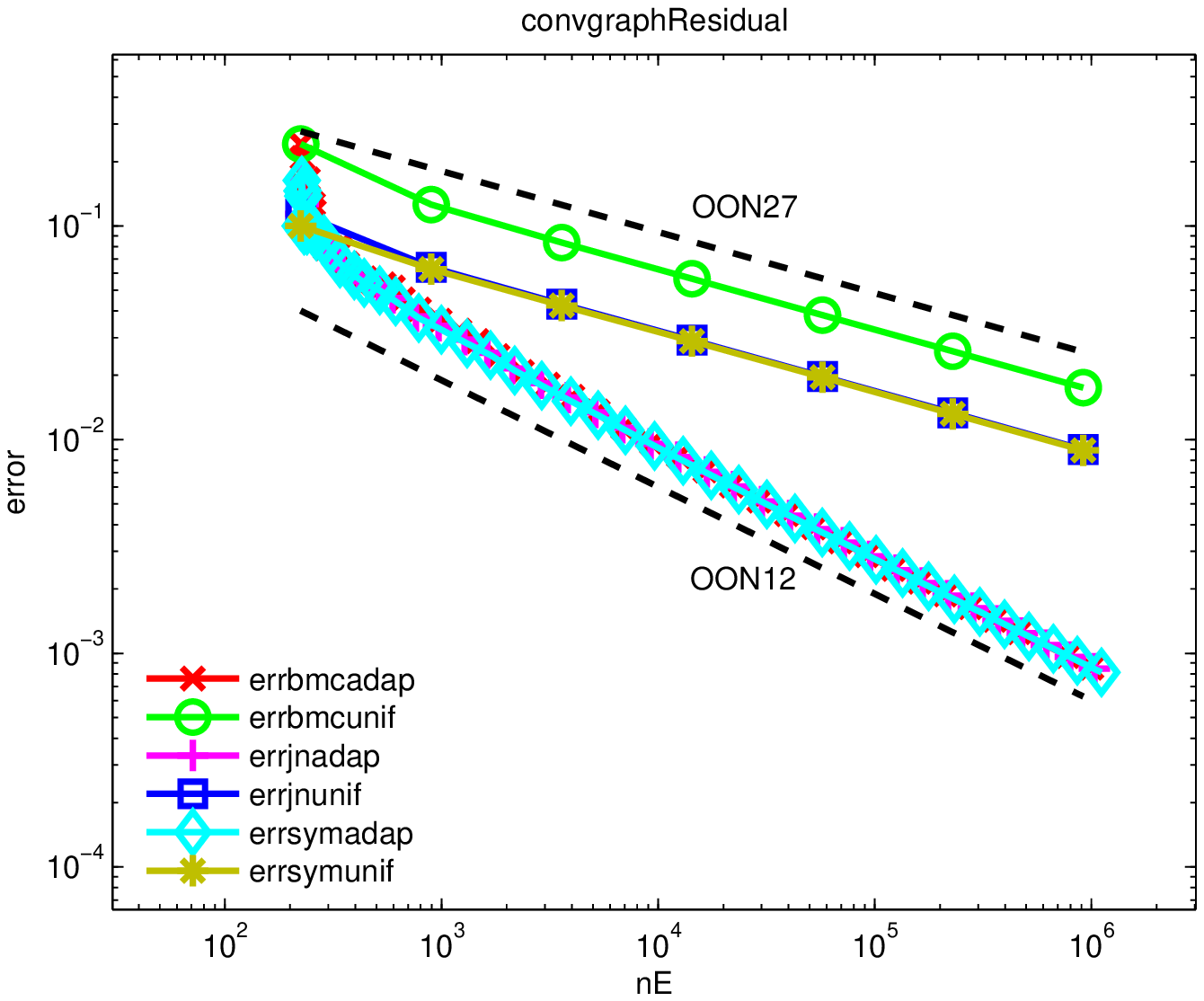}
  \caption{Convergence of the volume error $\err_\ell^\Omega$ for the nonlinear
  experiment of Section~\ref{sec:exp_nonlin}.}
  \label{fig:nonlin_conv_rates_error}
\end{center}	
\end{figure}

\begin{figure}[htbp]
\begin{center}
  \psfrag{error}[c][c]{estimator contribution $\zeta_\ell^\Omega$}
  \psfrag{nE}[c][c]{number of elements $N$}
  \psfrag{convgraphResidual}[c][c]{}
  \psfrag{estbmcadap}{\tiny BMC, adap.}
  \psfrag{estbmcunif}{\tiny BMC, unif.}
  \psfrag{estjnadap}{\tiny JN, adap.}
  \psfrag{estjnunif}{\tiny JN, unif.}
  \psfrag{estsymadap}{\tiny sym, adap.}
  \psfrag{estsymunif}{\tiny sym, unif.}

  \psfrag{OON12}[c][c][1][-27]{\tiny $\OO(N^{-1/2})$}
  \psfrag{OON27}[c][c][1][-15]{\tiny $\OO(N^{-2/7})$}

  \includegraphics[width=\columnwidth]{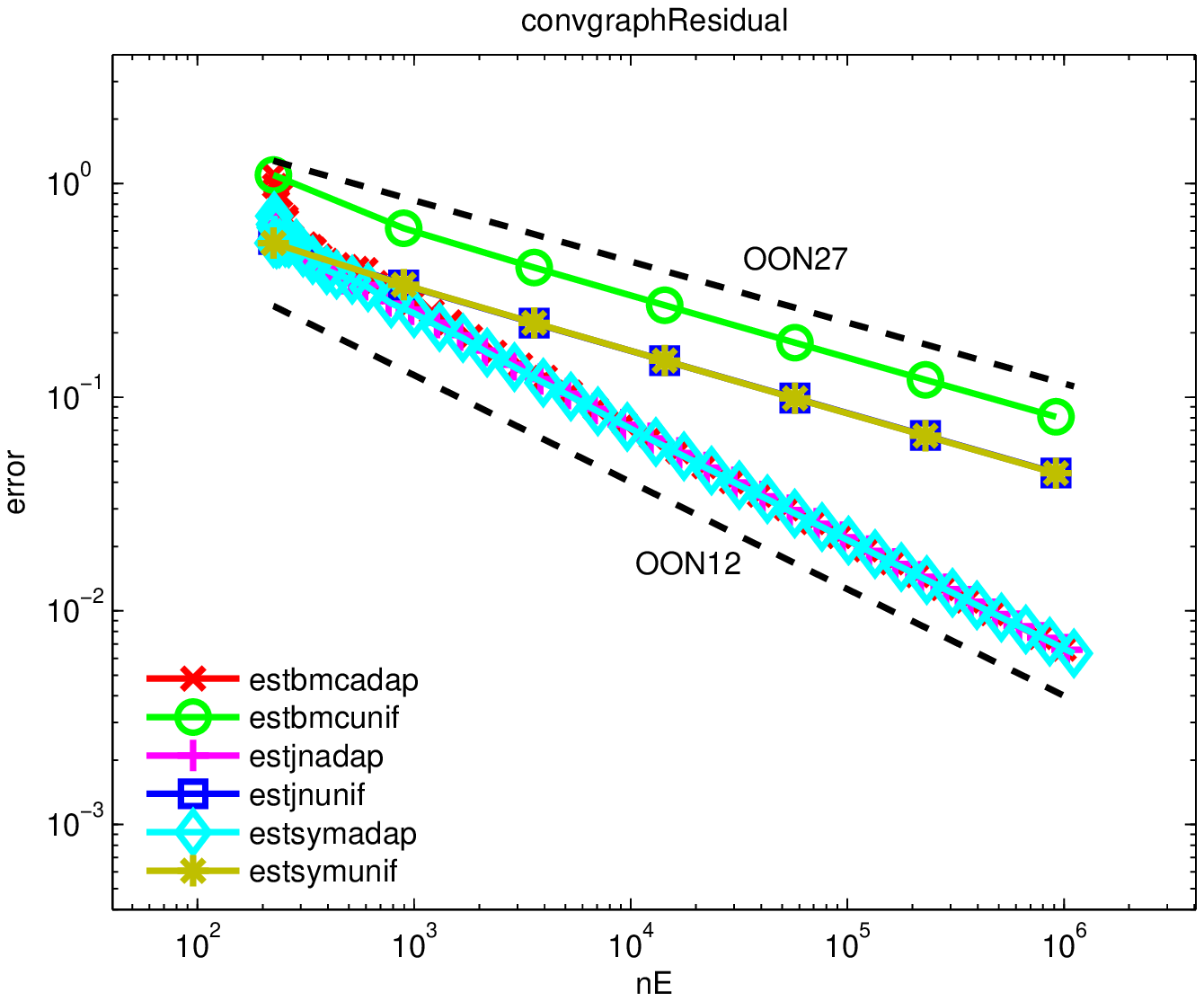}
  \caption{Convergence of the volume error estimator quantities $\zeta_\ell^\Omega$ for the
  nonlinear
  experiment of Section~\ref{sec:exp_nonlin}.}
  \label{fig:nonlin_conv_rates_est}
\end{center}	
\end{figure}

\begin{figure}[htbp]
\begin{center}
  \psfrag{error}[c][c]{efficiency index $\zeta_\ell^\Omega / \err_\ell^\Omega$}
  \psfrag{nE}[c][c]{number of elements $N$}
  \psfrag{convgraphResidual}[c][c]{}
  \psfrag{quotbmcadap}{\tiny BMC, adap.}
  \psfrag{quotbmcunif}{\tiny BMC, unif.}
  \psfrag{quotjnadap}{\tiny JN, adap.}
  \psfrag{quotjnunif}{\tiny JN, unif.}
  \psfrag{quotsymadap}{\tiny sym, adap.}
  \psfrag{quotsymunif}{\tiny sym, unif.}

  \includegraphics[width=\columnwidth]{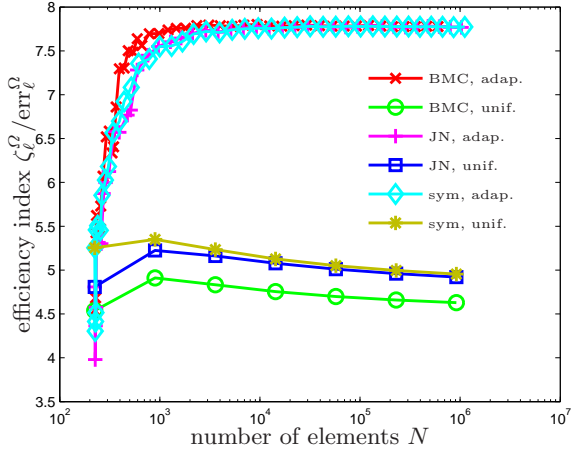}
  \caption{Efficiency index $\zeta_\ell^\Omega / \err_\ell^\Omega$
  versus number of elements $N$ for the nonlinear experiment of Section~\ref{sec:exp_nonlin}.
  The $x$-axis is scaled logarithmically.}
  \label{fig:nonlin_quotients}
\end{center}	
\end{figure}

\begin{figure}[htbp]
\begin{center}
  \psfrag{error}[c][c]{error}
  \psfrag{tinsec}[c][c]{time $t$ [sec.]}
  \psfrag{compTime}[c][c]{}
  \psfrag{error}[c][c]{error contribution $\err_\ell^\Omega$}

  \psfrag{compTime}[c][c]{}
  \psfrag{errbmcadap}{\tiny BMC, adap.}
  \psfrag{errbmcunif}{\tiny BMC, unif.}
  \psfrag{errjnadap}{\tiny JN, adap.}
  \psfrag{errjnunif}{\tiny JN, unif. }
  \psfrag{errsymadap}{\tiny sym, adap.}
  \psfrag{errsymunif}{\tiny sym, unif. }

  \includegraphics[width=\columnwidth]{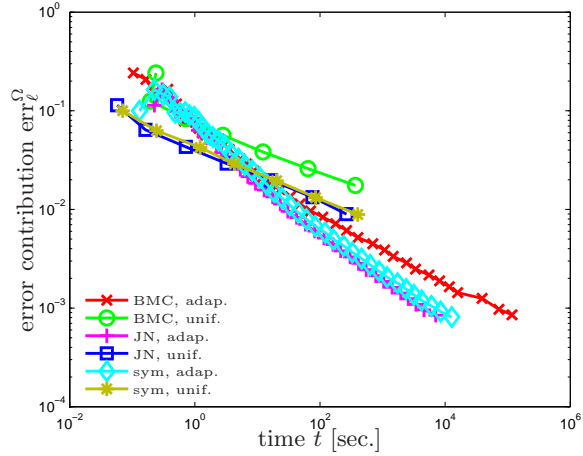}
  \caption{Comparison of the volume error versus the computing time for the
  nonlinear experiment of Section~\ref{sec:exp_nonlin} and uniform resp.\ adaptive mesh-refinement.}
  \label{fig:nonlin_comptime}
\end{center}	
\end{figure}

The results of Figure~\ref{fig:nonlin_quotients} argue for the efficiency of the
reliable error estimator $\zeta_\ell \in \{\rho_\ell, \eta_\ell, \zeta_\ell \}$,
which matches our observations of the first experiment.
As in Section~\ref{sec:exp_1}, we obtain from Figure~\ref{fig:nonlin_comptime}
that the adaptive strategy is superior to the uniform one.

\subsection{Conclusion}
In all our experiments, we observe that the adaptive mesh-refinement strategy empirically
leads to optimal convergence rates for the error as well as for the error
estimator quantities, whereas uniform refinement is inferior to the adaptive
ones.
Moreover, we see that the error quantities for the three different coupling
methods differ only slightly.
From this point of view, one cannot favour a certain coupling method.
But we stress that there are significant differences in the computing
times, where the Johnson-N\'ed\'elec coupling is superior to the other two
coupling methods. Although the Bielak-MacCamy coupling is just the transposed
problem of the Johnson-N\'ed\'elec coupling in the linear case, it
needs the most computing time of all three coupling schemes.
Furthermore, we have numerical evidence that the reliable error estimators
corresponding to the different coupling schemes are also efficient.

Finally, we remark that our numerical experiments have shown that the assumption
$\c{ellA} > 1/4$ from Theorem~\ref{thm:bmc_ell} and~\ref{thm:bmc_ell_jn} is sufficient for the
solvability of the Bielak-MacCamy coupling and Johnson-N\'ed\'elec coupling, but
\textit{not} necessary.


\begin{acknowledgements}
  The research of the authors Markus Aurada, Michael Feischl, Michael Karkulik
  and Dirk Praetorius is supported
  through the FWF project \textit{Adaptive Boundary Element Method}, funded by
  the Austrian Science fund (FWF) under grant P21732.
\end{acknowledgements}

\end{document}